\numberwithin{equation}{section}
\newtheorem{theorem}{Theorem}[section]
\newtheorem{lemma}[theorem]{Lemma}
\newtheorem{definition-lemma}[theorem]{Definition-Lemma}
\newtheorem{proposition}[theorem]{Proposition}
\newtheorem{corollary}[theorem]{Corollary}
\newtheorem{example}[theorem]{Example}
\newtheorem{definition}[theorem]{Definition}
\newtheorem{remark}[theorem]{Remark}
\newcommand{\Hom}        {{\mathrm {Hom}}}
\newcommand{\Arrow}      {\mathrm{\to}}
\newcommand{\frakx}      {\mathfrak{X}}
\newcommand{\R}          {\mathbb{R}}
\newcommand{\G}          {\mathcal{G}}
\renewcommand{\H}        {\mathcal{H}} 
\newcommand{\sour}       {\mathsf{s}}
\newcommand{\V}          {\mathcal{V}}
\newcommand{\VB}         {\mathrm{VB}}
\newcommand{\tar}        {{\mathsf{t}}}
\newcommand{\m}          {\mathsf{m}}
\newcommand{\toto}       {\rightrightarrows}
\newcommand{\h}          {\mathbbm{h}}
\renewcommand{\v}        {\mathfrak{v}}
\renewcommand{\b}        {b}
\renewcommand{\c}          {\mathfrak{F}}
\newcommand{\F}          {\mathcal{F}}
\newcommand{\pr}         {\mathrm{pr}}
\newcommand{\<}          {\langle}
\renewcommand{\>}        {\rangle}
\newcommand{\Fl}         {\mathrm {Fl}}  
\newcommand{\frakh}      {\mathfrak{h}}
\newcommand{\bbD}        {\mathbb{D}}
\newcommand{\B}          {\mathcal{B}}
\newcommand{\E}          {\mathcal{E}}
\newcommand{\brac}[1]       {\<\!\< #1 \> \!\>}
\newcommand{\dbrac}[1]  {\| #1 \|}
\newcommand{\bG}        {\mathbb{G}}
\newcommand{\bM}        {\mathbb{M}}
\newcommand{\bA}        {\mathbb{A}}
\renewcommand{\uparrow} {\mathrm{v}}
\newcommand{\C}         {\mathcal{C}}
\newcommand{\Lie}        {\mathcal L}
\newcommand{\doublewidetilde}[1]{{%
  \mathpalette\double@widetilde{#1}%
}}
\newcommand{\double@widetilde}[2]{%
  \sbox\z@{$\m@th#1\widetilde{#2}$}%
  \ht\z@=.9\ht\z@
  \widetilde{\box\z@}%
}
\begin{document}
\title{Differential forms with values in VB-groupoids}
\author{Thiago Drummond}
% \address{Departamento de Matem\'atica, Instituto de Matem\'atica,
% Universidade Federal do Rio de Janeiro,
% Caixa Postal 68530, Rio de Janeiro, RJ, 21941-909, Brasil.
% }
\email{drummond@im.ufrj.br}

\author[]{Leandro Egea}
%
% \address{*}
\email{lgegea@icmc.usp.br}
% \date{}

\maketitle

\begin{abstract}
 We introduce multiplicative differential forms on Lie groupoids with values in VB-groupoids. Our main result gives a complete description of these objects in terms of infinitesimal data. By considering split VB-groupoids, we are able to present a Lie theory for differential forms on Lie groupoids with values in 2-term representations up to homotopy. We also define a differential complex whose 1-cocycles are exactly the multiplicative forms with values in VB-groupoids and study the Morita invariance of its cohomology.
\end{abstract}

\section{Introduction}
This paper is devoted to the study of differential forms on Lie groupoids with coefficients. While multiplicative differential forms with values in representations of Lie groupoids have been treated in the literature, see e.g. \cite{CSS}, here we consider the broader context of forms with values in $\VB$-groupoids. Recall that $\VB$-groupoids are, roughly, vector bundles in the category of Lie  groupoids. They naturally extend the notion of Lie-groupoid representations by enconding the information of representations {\em up to homotopy} \cite{Gra-Met1} (see also \cite{Arias-Crai1}), with the tangent bundle playing the role of the adjoint representation. The infinitesimal counterparts of $\VB$-groupoids are known as $\VB$-algebroids. In this paper, we introduce the notion of multiplicative differential form with values in $\VB$-groupoids and establish two main results: first, we provide a purely infinitesimal description of these objects, extending the works in \cite{AC, BC, CSS}; second, we describe a cohomology theory for such differential forms, that we prove to be Morita invariant.

 Multiplicative differential forms (with trivial coeffcients) on Lie groupoids appear
 in various contexts, often in connection with the (Lie-theoretic) integration of geometric structures:
 e.g. in symplectic and pre-symplectic groupoids \cite{Wein, bcwz}, which are the global objects integrating Poisson and Dirac structures, respectively. The Lie theory of multiplicative differential forms with trivial coefficients is by now totally understood \cite{AC, BC}. More recently, differential forms with coefficients in a representation and their infinitesimal versions, known as Spencer operators, were studied in \cite{CSS} in an effort to understand the work of Cartan on Lie pseudogroups from a global perspective (see also \cite{CO}). One important application of \cite{CSS} is providing the infinitesimal characterization of certain multiplicative distributions on Lie groupoids, a relevant topic due to its relation with quantization of Poisson manifolds \cite{Haw} and the Lie theory of Dirac structures \cite{Jotz-Ortiz, Jotz}. There is, however, an additional requirement the distributions studied in \cite{CSS} must satisfy: the tangent space of the manifold of units of the Lie groupoid must be contained in the distribution. By allowing more general $\VB$-groupoids as coefficients, we are able to drop this extra condition and develop a Lie theory for general multiplicative distributions. 

 There is yet another important feature of differential forms on Lie groupoids: their multiplicativity
 is a cocycle condition on a differential complex, known as the Bott-Shulman complex. Its cohomology is a Morita invariant of the Lie groupoid related
 to the de Rham cohomology of the associated classifying space \cite{Behrend}. Here, we introduce a differential complex for forms with coeffcients generalizing the Bott-Shulman complex and prove the Morita invariance of its cohomology. In view of the recent  work on the Morita invariance of $\VB$-groupoids \cite{Hoyo-Ort}, it is to be expected that this complex will be a useful tool in understanding connections on vector bundles over differentiable stacks.
 
\smallskip
 
\paragraph{\bf Statement of results.}
To explain our main results it is necessary to recall some facts regarding $\VB$-groupoids. These are presented in greater detail in Section 2.

Let $\G \toto M$ be a Lie groupoid and $\V \toto E$ be a $\VB$-groupoid over $\G$. We denote by $C \to M$ the \textit{core bundle of $\V$}; it is defined as the kernel of the source map (seen as a vector bundle morphism) $\widetilde{\sour}: \V|_M \to E$. The target map $\widetilde{\tar}: \V|_M \to E$ induces a vector bundle morphism $\partial: C \to E$ known as the \textit{core anchor}. 

The Lie algebroid of $\G$ and the $\VB$-algebroid of $V$ are denoted by $A \to M$ and $\v \to E$, respectively. Recall that $\v$ is also a double vector bundle, where the second vector bundle structure $\v \to A$ comes from applying the Lie functor to the structure maps of the vector bundle $\V \to \G$. The sections of $\v \to E$ which are vector bundle morphisms from $E \to M$ to $\v \to A$ are known as \textit{linear sections} and we denote them as $\Gamma_{lin}(E,\v)$. The projection of a linear section on the section of $A$ it covers is denoted by $\pr: \Gamma_{lin}(E,\v) \to \Gamma(A)$. 

A fundamental result regarding linear sections is that the right-invariant vector fields of $\V$ coming from $\Gamma_{lin}(E,\v)$ are linear \cite[Prop.~3.9]{Esp-Tor-Vit} (see also Proposition \ref{lemma:right_linear} below). So, there is a derivation $\Delta_\eta: \Gamma(\V) \to \Gamma(\V)$ and a Lie derivative operator $L_{\Delta_\eta}: \Omega(\G, \V) \to \Omega(\G, \V)$ corresponding to each element $\eta \in \Gamma_{lin}(E,\v)$. We review basic facts about linear vector fields and derivations on vector bundles in the Appendix. Recently, in \cite{Esp-Tor-Vit}, derivations of $\VB$-groupoids coming from multiplicative linear vector fields on $\V$ were studied. It is important to stress that their derivations are not related to ours.

We are now able to state our main result in the Lie theory of differential forms. First, an element $\vartheta \in \Omega^q(\G, \V)$ is said to be \textit{multiplicative} if it defines a $\VB$-groupoid morphism when seen as a map $T\G \oplus \dots \oplus T\G \to \V$. 

\begin{theorem}\label{thm:intro1}
If $\G \toto M$ is a source 1-connected groupoid, then there is a natural 1-1 correspondence between multiplicative forms $\vartheta \in \Omega^q(\G, \V)$ and triples $(D,l,\theta)$, where $l: A \to \wedge^{q-1} T^*M \otimes C$ is a vector bundle map, $\theta \in \Omega^q(M, E)$ and $D: \Gamma_{lin}(E,\v) \to \Omega^q(M, C)$ satisfies
\begin{equation}\label{comp_eq}
D(\B \Phi) = - \Phi \circ \theta, \,\, D(f \eta) = f D(\eta) + df \wedge l(\pr(\eta)), 
\end{equation}
where $f \in C^{\infty}(M), \,\, \eta \in \Gamma_{lin}(E,\v)$, $\B \Phi \in \Gamma_{lin}(E, \v)$ is the linear section covering the zero section of $A$ corresponding to a vector bundle morphism $\Phi: E \to C$. Also, the following equations hold, for $\eta_1, \, \eta_2 \in \Gamma_{lin}(E,\v)$, $\alpha, \, \beta \in \Gamma(A)$:
\begin{align*}
\tag{IM1} D([\eta_1, \eta_2]) & = L_{\nabla_{\eta_1}} D(\eta_2) - L_{\nabla_{\eta_2}} D(\eta_1)\\
\tag{IM2}l([\pr(\eta), \beta]) & = L_{\nabla_{\eta}} l(\beta) - i_{\rho(\beta)} D(\eta)\\
\tag{IM3}i_{\rho(\alpha)}l(\beta) & = - i_{\rho(\beta)} l(\alpha)\\
\tag{IM4}L_{\nabla_\eta} \theta & =  \partial(D(\eta))\\
\tag{IM5}i_{\rho(\alpha)} \theta & = \partial(l(\alpha)), 
\end{align*}
where $\nabla$ is the fat representation of $\Gamma_{lin}(E,\v)$ on $\partial: C \to E$, $\rho: A \to TM$ is the anchor of $A$. The triple $(D,l, \theta)$ is obtained from $\vartheta$ by the formulas:
$$
D(\eta)= L_{\Delta_\eta}(\vartheta)|_M, \,\, l(\alpha) = i_{\overrightarrow{\alpha}} \vartheta|_M, \,\, \theta = \vartheta|_M.
$$
\end{theorem}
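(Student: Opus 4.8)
The plan is to realize the bijection as a differentiation/integration pair and to prove separately: (i) that the assignment $\vartheta\mapsto(D,l,\theta)$ given by the stated formulas is well defined, i.e.\ the triple satisfies \eqref{comp_eq} and (IM1)--(IM5); (ii) that it is injective; and (iii) that it is surjective. Source connectedness of the fibres of $\sour$ will enter in (ii) and full source 1-connectedness in (iii), while (i) is purely infinitesimal and needs no connectedness hypothesis.

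For (i), given a multiplicative $\vartheta$ I define the triple and check the identities by Cartan calculus on $\V$-valued forms. The first point is that $l(\alpha)=i_{\overrightarrow{\alpha}}\vartheta|_M$ and $D(\eta)=L_{\Delta_\eta}\vartheta|_M$ are core-valued: since $\overrightarrow{\alpha}$ and the linear field $\overrightarrow{\eta}$ (linear by Proposition \ref{lemma:right_linear}) are tangent to the $\sour$-fibres, applying $\widetilde{\sour}$ and restricting to $M$ annihilates them, so both land in $C=\Ker\widetilde{\sour}$. The second relation in \eqref{comp_eq} then follows from $\Delta_{f\eta}\leftrightarrow(\sour^*f)\,\overrightarrow{\eta}$ and the Cartan identity $L_{(\sour^*f)\overrightarrow{\eta}}=\sour^*f\,L_{\overrightarrow{\eta}}+d(\sour^*f)\wedge i_{\overrightarrow{\pr(\eta)}}$ restricted to $M$, using $i_{\overrightarrow{\pr(\eta)}}\vartheta|_M=l(\pr(\eta))$ and $d(\sour^*f)|_M=df$; the normalization $D(\B\Phi)=-\Phi\circ\theta$ holds because $\overrightarrow{\B\Phi}$ is the vertical core-linear field induced by $\Phi$, whose derivation is $C^\infty$-linear, so $L_{\Delta_{\B\Phi}}\vartheta|_M$ is algebraic and computes to $-\Phi\circ\theta$. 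The structural equations (IM1)--(IM5) all descend from the single fact that $\eta\mapsto\overrightarrow{\eta}$ is a Lie-algebra morphism, so that $L_{\Delta_{[\eta_1,\eta_2]}}=[L_{\Delta_{\eta_1}},L_{\Delta_{\eta_2}}]$, combined with $L_Xi_Y-i_YL_X=i_{[X,Y]}$ and the bracket relations $[\overrightarrow{\pr(\eta)},\overrightarrow{\beta}]=\overrightarrow{[\pr(\eta),\beta]}$, $[\overrightarrow{\alpha},\overrightarrow{\beta}]=\overrightarrow{[\alpha,\beta]}$: (IM1) is bracket-compatibility of $D$; (IM2) and (IM3) come from expanding $L_{\Delta_\eta}(i_{\overrightarrow{\beta}}\vartheta)$ and from $i_{\overrightarrow{\alpha}}i_{\overrightarrow{\beta}}=-i_{\overrightarrow{\beta}}i_{\overrightarrow{\alpha}}$, restricted to $M$ and pushed through $\rho(\alpha)=d\tar(\overrightarrow{\alpha})|_M$; and (IM4), (IM5) are the images of these under $\widetilde{\tar}$, using $\partial=\widetilde{\tar}|_C$.

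The substance is the integration step (iii), and here source 1-connectedness enters through a Lie-II/flatness argument. The guiding principle is that a multiplicative form is exactly a $\VB$-groupoid morphism $T\G\oplus\cdots\oplus T\G\to\V$ (from the $q$-fold Whitney sum of tangent prolongation groupoids) covering $\mathrm{id}_\G$, hence by the Lie functor it should integrate from the corresponding $\VB$-algebroid morphism; the triple $(D,l,\theta)$ is precisely a repackaging of that infinitesimal morphism, and \eqref{comp_eq}, (IM1)--(IM5) are exactly the conditions making it a well-defined $\VB$-algebroid morphism. Concretely, the right-invariant fields $\overrightarrow{\pr(\eta)}$ span $\Ker d\sour$, so their flows sweep out each $\sour$-fibre from $M$; I would reconstruct $\vartheta$ along these flows from the initial data $\vartheta|_M=\theta$, first source-vertical leg $l$, and the evolution law prescribed by $L_{\Delta_\eta}\vartheta$ via $D(\eta)$.

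The main obstacle is showing this reconstruction is well defined and genuinely multiplicative. Well-definedness is a flatness statement: the holonomy obstruction to path-independence is governed by $L_{\Delta_{[\eta_1,\eta_2]}}-[L_{\Delta_{\eta_1}},L_{\Delta_{\eta_2}}]$ acting on the evolving form, and (IM1), together with (IM2) controlling the mixed $\Gamma(A)$-directions, says exactly that it vanishes; source 1-connectedness then upgrades local consistency to a global $\vartheta$, with simple-connectedness killing the residual monodromy. Multiplicativity I would prove by showing that $\gra(\vartheta)\subset(T\G\oplus\cdots\oplus T\G)\oplus\V$ is a subgroupoid: it contains the units — encoded by $\theta$, $l$ and the core-anchor relations (IM4), (IM5) through the splitting $T_m\G=T_mM\oplus A_m$ — and is invariant under the generating right-translation flows by construction, so closure under multiplication follows from source connectedness. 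Injectivity (ii) is the uniqueness side of the same flow argument: two multiplicative forms with equal triples agree on $M$ with all their $\Delta_\eta$-derivatives there, hence along the generating flows and so on all of source-connected $\G$. I expect the most delicate bookkeeping to be matching each of (IM1)--(IM5) to the correct component of the flatness and closure conditions and, in particular, verifying that the higher source-vertical legs — which are not part of $(D,l,\theta)$ — are indeed forced by multiplicativity and consistent with the reconstructed form.
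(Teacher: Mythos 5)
Your overall architecture (differentiate, then integrate by a Lie-II-type argument) is the right shape, but your route is genuinely different from the paper's. The paper never integrates $\vartheta$ directly: it encodes a $\V$-valued $q$-form as a componentwise linear \emph{function} $\c_\vartheta$ on the big groupoid $\bG=T\G\times_\G\cdots\times_\G T\G\times_\G\V^*$, shows that multiplicativity of $\vartheta$ is the cocycle equation for $\c_\vartheta$ in differentiable cohomology, packages $(D,l,\theta)$ as a fiberwise linear section $\Upsilon\in\Gamma(\bA^*)$ evaluated on the linear and core generators $\chi_\eta,\B\beta_{(j)},\B\varphi$, proves that the IM equations are exactly the Chevalley--Eilenberg cocycle condition for $\Upsilon$, and then invokes the standard integration of Lie algebroid cocycles to multiplicative functions on the source 1-connected $\bG$. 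Both existence and uniqueness are thus outsourced to a known theorem about functions. Your differentiation step (i) is essentially sound and arguably more transparent than the paper's (it is direct Cartan calculus rather than Lie derivatives of $\c_\vartheta$), though two points need care: the identity is $\overrightarrow{f\eta}=(\tar^*f)\,\overrightarrow{\eta}$, not $(\sour^*f)\,\overrightarrow{\eta}$ (harmless after restriction to $M$, but the paper's formula $L_{\Delta_{f\eta}}\vartheta=(\tar^*f)L_{\Delta_\eta}\vartheta+(\tar^*df)\wedge i_{\overrightarrow{\alpha}}\vartheta$ is the correct intermediate step); and (IM3) does not follow from bare antisymmetry of contractions ``pushed through $\rho$'' --- at a unit $x$ one has $T_x\G=T_xM\oplus A_x$, and $i_{\rho(\alpha)}l(\beta)$ contracts $\vartheta$ with $\rho(\alpha)\in T_xM$ while $i_{\overrightarrow{\alpha}}i_{\overrightarrow{\beta}}\vartheta|_M$ contracts with $\alpha(x)\in A_x$; relating the two uses multiplicativity (in the paper it comes out of the anchor relation $\rho_\bA(\B\beta_{(j)})=(\rho(\beta)^{\uparrow,k}_{(j)},0)$).

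The genuine gap is in your integration step. You propose to ``reconstruct $\vartheta$ along the flows of $\overrightarrow{\pr(\eta)}$ from the initial data $\theta$, $l$ and the evolution law prescribed by $L_{\Delta_\eta}\vartheta$ via $D(\eta)$,'' but the triple only prescribes $L_{\Delta_\eta}\vartheta$ \emph{at the units}: $D(\eta)=L_{\Delta_\eta}\vartheta|_M$. To transport $\vartheta$ along the flow of $\overrightarrow{\eta}$ out to a general $g\in\G$ you need the value of $L_{\Delta_\eta}\vartheta$ at every point of the source fibre, and for a not-yet-constructed (hence not-yet-known-to-be-multiplicative) form there is no such formula. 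What is needed is a right-invariance-type evolution identity expressing $L_{\Delta_\eta}\vartheta$ at $g$ in terms of $D(\eta)$, $l$, $\theta$ and $\vartheta$ itself --- this is precisely the hard analytic content of the reconstruction proofs in \cite{CSS, bcwz}, and it is also what your ``flatness'' claim presupposes: without the evolution law at general points, the holonomy operator whose vanishing you attribute to (IM1)--(IM2) is not even defined. The same issue undercuts your injectivity argument: equality of the triples gives $L_{\Delta_\eta}(\vartheta_1-\vartheta_2)|_M=0$, which propagates along the flows only once the difference is known to satisfy a first-order linear ODE there. The paper sidesteps all of this because Lie II for \emph{functions} on $\bG$ delivers existence, uniqueness and multiplicativity in one stroke; if you want to keep your direct route you must first establish the evolution identity (equivalently, prove that the putative graph, defined fibrewise by flowing the unit data, is independent of the chosen path of bisections and closed under the groupoid operations), and that is where the real work lies.
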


We shall refer to the set of equations (IM1)-(IM5) as the \textit{IM equations} (where IM stands for ``infinitesimally multiplicative'') and to a triple $(D,l,\theta)$ satisfying them together with \eqref{comp_eq} as an \textit{IM $q$-form on $A$ with values in $\v$}. 

In section 3, we show how Theorem \ref{thm:intro1} recovers previous infinitesimal-global results in the literature regarding differential forms on Lie groupoids. In this section, we also show how multiplicative forms with values in $\VB$-groupoids and IM forms with values in $\VB$-algebroids give rise to a notion of multiplicative forms and IM forms with values in representations up to homotopy. Some of the equations appearing in this context have also appeared in \cite{Wald} in connection with higher gauge theory. 

A distribution $\H \subset T\G$ on the Lie groupoid $\G$ is called \textit{multiplicative} if it is a subgroupoid of the tangent groupoid $T\G$. These distributions can be studied via Theorem \ref{thm:intro1} by considering the quotient projection $\vartheta: T\G \to T\G/\H$ as a 1-form with value in a $\VB$-groupoid. In Section 6, we show how the resulting IM 1-form on $A$ with values in $\mathrm{Lie}(T\G/\H)$ can be refined to give an infinitesimal description of multiplicative distributions using the notion of \textit{IM distributions}. The infinitesimal-global correspondence between multiplicative and IM distributions (Theorem \ref{thm:IM_dist}) recovers the result of \cite{CSS} characterizing multiplicative distributions when the base manifold of $\H$ is $TM$. It is important to mention that the set of equations satisfied by IM distributions does not depend on the choice of connections on $A$. This improves the characterization of $\VB$-subalgebroids of $TA$ appering in \cite{Dru-Jotz-Ort}.

% It is interesting to note that any multiplicative form $\vartheta \in \Omega^q(\G,\V)$ can be lifted to a multiplicative form $\widehat{\vartheta}$ (see \eqref{lift_form} for a formula) on the fat groupoid $\F_{\rm inv}(\V)$ with values in the fat representation $C$. This lifting procedure when applied to $\mathrm{id} \in \Omega^1(\G, T\G)$ recovers the Cartan 1-form on the jet groupoid. From this perspective of forms with values in representations, one can re-interpret (IM1)-(IM3) as saying that $(D, l\circ \pr)$ is a $C$-valued Spencer operator on the fat algebroid $\F(\v)$. So, in light of this and the main theorem of \cite{CSS}, one should think of $(D,l\circ \pr)$ as the infinitesimal data corresponding to $\widehat{\vartheta}$. However, we want to emphasize that is not the approach we follow here. Instead, 

Our approach to prove Theorem \ref{thm:intro1} is built on ideas developed in \cite{BC, Bur-Drum}. Namely, the multiplicativity of $\vartheta \in \Omega^q(\G,\V)$ can be characterized by a cocycle equation for the corresponding function on the big groupoid 
$$
\bG = T\G \oplus \dots \oplus T\G \oplus \V^*
$$
and the corresponding IM $q$-form on $A$ with values in $\v$ can be obtained by taking Lie derivatives along right-invariant vector fields of $\bG$ coming from linear and other special sections of its Lie algebroid known as \textit{core} sections. This is done in Section 5.

The multiplicative forms with values in a $\VB$-groupoid are also 1-cocycles in a complex $C^{\bullet,q}(\V)$ defined as follows:
\begin{equation*}
\begin{aligned}
\C^{p,q}(\V) & = \{\vartheta \in \Omega^q(B_p \G, \pr_1^*\V) \,\,| \,\,\ \widetilde{\sour} \circ \vartheta = \partial_0^*\theta, \, \, \text{ for some } \theta \in \Omega^q(B_{p-1}\G, t^*E)\}, 
\end{aligned}
\end{equation*}
where $B_p\G$ is the space of composable $p$-arrows, $\pr_1: (g_1, \dots, g_p) \mapsto g_1$ is the projection on the first arrow, $t:(g_1,\dots, g_{p-1}) \mapsto \tar(g_1)$, for the target map, and $\partial_0: (g_1, \dots, g_p) \mapsto (g_2, \dots, g_p)$ is the 0th face map. Note that $C^{\bullet,0}(\V) = C_{\VB}^\bullet(\V)$, the $\VB$-groupoid complex introduced in \cite{Gra-Met1} to calculate the cohomology of $\G$ with values in 2-term representation up to homotopy. Also, the differential $\delta: C^{\bullet,q}(\V) \to C^{\bullet+1, q}(\V)$ (see \eqref{differential} for the explicit formula) is a natural extension of the differential on $C^{\bullet}_{\VB}(\V)$. Our second main result is the following:

\begin{theorem}\label{thm:intro2}
The cohomology of the complex $C^{\bullet,q}(\V)$ is a Morita invariant of $\G$.
\end{theorem}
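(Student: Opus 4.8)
The plan is to prove Morita invariance by reducing to the weak equivalences that generate Morita equivalence and then running an acyclic-cover (generalized Mayer--Vietoris) argument adapted to the coefficients. Recall that two Lie groupoids are Morita equivalent if and only if they are connected by a chain of weak equivalences of the form $\phi\colon \G[U] \to \G$, where $U \to M$ is a surjective submersion and $\G[U] = U \times_M \G \times_M U$ is the associated pullback groupoid; such a functor is essentially surjective and fully faithful. Pulling $\V$ back along $\phi$ yields a $\VB$-groupoid $\V[U] \toto E[U]$ over $\G[U]$, with side bundle $E[U]$, core $C[U]$, and core anchor pulled back from $\partial$, and $\phi$ induces a cochain map $\phi^*\colon C^{\bullet,q}(\V) \to C^{\bullet,q}(\V[U])$ that is compatible with the simplicial differential $\delta$ and with the source constraint $\widetilde{\sour}\circ\vartheta = \partial_0^*\theta$ (since $\phi^*$ acts by pullback of forms and intertwines all the structure maps of $\V$). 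Thus it suffices to show $\phi^*$ is a quasi-isomorphism for every surjective submersion $U \to M$; chaining the resulting isomorphisms along the chain of weak equivalences realizing a given Morita equivalence then yields the theorem.

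The engine of the proof is the Cech--simplicial double complex associated with $U \to M$. Over each nerve space $B_p\G$ the submersion induces a surjective submersion $B_p\G[U] \to B_p\G$ together with its iterated fibre products, producing a double complex $K^{\bullet,\bullet}$ whose horizontal differential $\check{\delta}$ is the Cech differential of these covers, whose vertical differential is $\delta$, and whose entries are $q$-forms valued in the relevant pullback of $\V$ and subject to the source constraint. The augmentations embed $C^{\bullet,q}(\V)$ and $C^{\bullet,q}(\V[U])$ into $\mathrm{Tot}(K)$. Choosing local sections of $U \to M$ over an open cover of $M$ together with a subordinate partition of unity produces a contracting homotopy for each Cech row, exactly as in the proof that the Cech--de Rham complex of a surjective submersion is a resolution; because this homotopy is $C^\infty(M)$-linear it extends verbatim to $\V$-valued forms and, being natural in the structure maps, it preserves the source constraint. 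Hence every row is a resolution of its augmentation, and a standard total-complex comparison shows both augmentations are quasi-isomorphisms, giving $\phi^*$ quasi-iso.

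To keep the coefficients transparent and to connect with known results I would also record the short exact sequence of complexes
\begin{equation*}
0 \longrightarrow \Omega^q(B_\bullet\G,\, t^*C) \longrightarrow C^{\bullet,q}(\V) \longrightarrow \Omega^q(B_{\bullet-1}\G,\, t^*E) \longrightarrow 0,
\end{equation*}
arising from $\vartheta \mapsto \theta$ (surjectivity follows from fibrewise surjectivity of $\widetilde{\sour}$, and the kernel is the core-valued subcomplex). The sub- and quotient complexes carry the simplicial differentials twisted by the honest core and side representations of $\G$ on $C$ and $E$ (see \cite{Gra-Met1}), the sequence is natural under $\phi^*$, and so by the five lemma it would suffice to prove Morita invariance for complexes of the form $\Omega^q(B_\bullet\G, t^*W)$ with $W$ an honest representation. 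For $q=0$ the theorem is exactly the Morita invariance of the $\VB$-complex $C^{\bullet,0}(\V)=C^\bullet_{\VB}(\V)$ established in \cite{Hoyo-Ort}, so the genuinely new content is the presence of the differential-form degree $q>0$; the double complex of the previous paragraph handles this uniformly.

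The step I expect to be the main obstacle is the careful bookkeeping of the coefficients and the constraint through the double complex: one must verify that the identifications of $\V$ across the different nerve levels and over the cover are compatible with $\widetilde{\sour}$ and $\widetilde{\tar}$ (so that $\V[U]$ really is the $\pr_1$-pullback of the pulled-back bundle and the constraint defining $C^{p,q}$ is preserved under both $\check{\delta}$ and the contracting homotopy), and that the acyclicity of the Cech rows persists with nontrivial coefficients. The short exact sequence above is what makes this manageable, since it reduces the constraint to the honest side and core representations rather than the full representation-up-to-homotopy structure, after which the five lemma and naturality absorb the remaining compatibilities.
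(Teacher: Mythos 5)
Your route is genuinely different from the paper's (which embeds $C^{\bullet,q}(\V)$ as a subcomplex $C^{\bullet}_{\rm ext}(\bG)$ of the differentiable cochain complex of the big groupoid $\bG=\times_\G^q T\G\times_\G\V^*$, proves that embedding is a quasi-isomorphism by an explicit recursion, and then quotes the Morita invariance results of \cite{Hoyo-Ort} for $\bG$), but as written it has a genuine gap at the step you rely on to ``make the bookkeeping manageable''. The sequence
$0 \to \Omega^q(B_\bullet\G, t^*C) \to C^{\bullet,q}(\V) \to \Omega^q(B_{\bullet-1}\G, t^*E) \to 0$
is exact termwise but is \emph{not} a short exact sequence of complexes: if $\widetilde{\sour}\circ\vartheta=0$, then $\widetilde{\sour}\circ(\delta\vartheta)=-\partial_0^*(\partial\circ\vartheta)$, which is nonzero whenever the core anchor $\partial:C\to E$ is nonzero (this is visible in Example \ref{ex:2-term_rep}, where the $E$-component of $\delta\vartheta$ is $\partial_0^*(\delta_E\theta+\partial\circ\omega)$). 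So the core-valued forms are not a subcomplex, the differential genuinely mixes the $C$- and $E$-parts, and moreover $C$ and $E$ do not carry honest $\G$-representations in general --- only the representation up to homotopy on $C[1]\oplus E$. Hence the five-lemma reduction to complexes $\Omega^q(B_\bullet\G,t^*W)$ with $W$ an honest representation is not available; this is exactly the phenomenon that forces the paper (and \cite{Hoyo-Ort} in the $q=0$ case) to work with the whole $\VB$-groupoid at once rather than with its core and side bundles separately.

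The double-complex argument also needs more than you supply. Row-exactness of the \v{C}ech rows (which does hold, by local sections of $U\to M$ and a partition of unity, for forms valued in bundles pulled back from $B_p\G$) shows that the augmentation $C^{\bullet,q}(\V)\to\mathrm{Tot}(K)$ is a quasi-isomorphism, but it does \emph{not} by itself show the same for the augmentation from $C^{\bullet,q}(\V[U])$: the columns of the \v{C}ech--simplicial complex you describe are the complexes of the groupoids $\G[U^{(r+1)}]$, not resolutions of $C^{\bullet,q}(\V[U])$, so a second, independent exactness argument (e.g.\ the two-sided bibundle double complex as in Crainic's proof for differentiable cohomology) is required and is where the real work lies. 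Finally, the compatibility of the contracting homotopy with the defining constraint $\widetilde{\sour}\circ\vartheta=\partial_0^*\theta$ is asserted but is the delicate point: the constraint couples simplicial degrees $p$ and $p-1$ through $\partial_0$, while the homotopy acts in the \v{C}ech direction, so the local sections must be chosen once over $M$ and shown to induce homotopies commuting with all face maps of the nerve. None of these issues is obviously fatal, but each needs an argument that the proposal does not contain; the paper's detour through $\bG$ is precisely a device for outsourcing all of them to the known Morita invariance of ordinary differentiable cohomology.
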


The construction of the differential complex and some particular examples are presented in Section 4. The proof of Theorem \ref{thm:intro2} is contained in Section 5.4 and also explores the idea of seeing differential forms as functions in a bigger groupoid. In fact, we prove that $C^{\bullet, q}(\V)$ can be embedded as a subcomplex of the differentiable cochain complex of $\bG$ and use this fact to prove the result. As corollaries of Theorem \ref{thm:intro2} we recover the result of Morita invariance of $\VB$-groupoid cohomology obtained in \cite{Hoyo-Ort} and the Morita invariance of C\"ech cohomology  of $\G$ with values in the sheaf of $q$-differential forms $\Omega^q$ obtained in \cite{Behrend}.

\subsection*{Acknowledments}
This work is based on the second author's doctoral thesis \cite{Egea} carried out at IMPA under the supervision of Henrique Bursztyn and supported by a doctoral scholarship from CNPq. During the final stages of the paper, the second author held a post doc position at ICMC-USP supported by a fellowship from CAPES. 

The authors would like to thank Henrique Bursztyn for suggesting the problem as well as for many conversations which helped to improve this work.

\tableofcontents

\section{VB-groupoids and VB-algebroids}
% \comment{ 
% \begin{itemize}
% \item Definition
% \item Sections
% \item VB-algebroids
% \item Relation with representations up to homotopy
% \end{itemize}
% }
In this preliminary section, we recall some properties of $\VB$-groupoids and $\VB$-algebroids focusing in the study of the right-invariant vector fields on $\VB$-groupoids coming from core and linear sections on the respective $\VB$-algebroids. It will be particularly important to understand how this relationship behaves under dualization. Our main reference here is \cite{Bur-Cab-Hoy} (see also \cite{Mckz2}).

\subsection{Definitions}
For a vector bundle $E \to M$, we shall refer to the multiplication by non-negative scalars $\h: \mathbb{R}_{\geq 0}\times E \Arrow E$, $\h_\lambda(e)=\lambda e$ as the \textit{homogeneous structure} on $E$. A \textit{$\VB$-groupoid} $\V \toto E$ over $\G \toto M$ is a vector bundle $p_\V: \V \to \G$ such that the homogeneous structure on $\V$ defines Lie groupoid morphisms for each $\lambda$.

% \begin{definition}
% A $\VB$-groupoid is a square
% \begin{equation}\label{VB_diagram}
% \xymatrix{
% \V \ar@<-3pt>[d] \ar@<3pt>[d] \ar[r]^{p_\V} & \G \ar@<-3pt>[d] \ar@<3pt>[d]\\
% E \ar[r]_{p} & M,\\
% }
% \end{equation}
% whose horizontal sides are vector bundles, the vertical sides are Lie groupoids, satisfying the following compatibility condition:
% denoting by $\h^\V$ and $\h$ the homogeneous structures on $\V$ and $E$, then,
% for each $\lambda\geq 0$, $\h^\V_{\lambda}: \V \Arrow \V$ is a groupoid morphism over $\h_\lambda: E\to E$.
% \end{definition}

We shall denote the source and target maps for the groupoid $\G \toto M$ as $\sour, \tar$ and, for $\V \toto E$, as $\widetilde{\sour}, \widetilde{\tar}$. As usual, we shall denote the multiplication on $\G$ by concatenation and on $\V$ by $\bullet$, the inversion using the $(\cdot)^{-1}$ notation and treat the unit maps as inclusions $M \subset \G$, $E \subset \V$. 

It is important to emphasize that the compatibility between the multiplication and the linear structure on $\V$ can be also presented as the \textit{interchange law}: if $(v_1, v_2), (w_1,w_2) \in \V_{g_1} \times \V_{g_2}$ are composable pairs of vectors, then $(v_1+w_1, v_2+w_2)$ is composable and 
\begin{equation}
(v_1 + w_1)\bullet (v_2 + w_2) = v_1 \bullet v_2 + w_1\bullet w_2.
\end{equation}

The \textit{core bundle} of $\V$ is the vector bundle $C \to M$ defined as $C:=\ker(\widetilde{\sour})|_M $. There is a short exact sequence of vector bundles over $\G$
\begin{equation}\label{core_ses}
0 \to \tar^*C \to \V \to \sour^*E \to 0
\end{equation}
called the \textit{core exact sequence}. The inclusion $\tar^*C \to \V$ defines a map $c \mapsto c_R$ on the level of sections given as
$
c_R(g) = c(\tar(g))\bullet 0_g.
$
The target map $\widetilde{\tar}$ restricted to $C$ defines a vector bundle map $\partial:= \widetilde{\tar}|_C: C \to E$ called the \textit{core anchor}. It will be important to introduce left-core sections $c_L \in \Gamma(\V)$ as well: they are defined by
$$
c_L(g) =  - 0_g \bullet c(\sour(g))^{-1} = 0_g \bullet (c- \partial(c))(\sour(g)), \,\,\, c \in \Gamma(C).
$$

\begin{example}\em
 For a Lie groupoid $\G \toto M$, its tangent bundle is an example of $\VB$-groupoid, $T\G \toto TM$. The core bundle is the Lie algebroid of $\G$, $A \to M$, with core anchor given by the anchor of the Lie algebroid, $\rho: A \to TM$. Also, for a section $\alpha \in \Gamma(A)$, $\alpha_R$ (resp. $\alpha_L) \in \mathfrak{X}(\G)$ is the right-invariant (resp. left-invariant) vector field which we denote here as $\overrightarrow{\alpha}$ (resp. $\overleftarrow{\alpha}$).
\end{example}

\begin{example}\label{semi-direct}\em
Let $\mathcal{E}=C[1]\oplus E$ \footnote{$C$ has degree $-1$ and $E$ has degree $0$.} be a graded vector bundle carrying a representation up to homotopy (ruth) of $\G \toto M$. We denote by $\partial: C\to E$ the 2-term complex, $\Psi_g$ the quasi-action on the complex,
$$
\begin{CD}
 C_{\sour(g)} @> \Psi_g >> C_{\tar(g)}\\
 @V \partial VV    @VV \partial V \\
 E_{\sour(g)} @> \Psi_g>>E_{\tar(g)},
\end{CD}
$$ 
and $\Omega_{g_1, g_2}: E_{\sour(g_2)} \to C_{\tar(g_1)}$ the curvature term associated to the representation (see \cite{Arias-Crai1, Gra-Met1}). \textit{The semidirect product of $\G$ with the ruth} is the vector bundle $\V = \tar^*C \oplus \sour^*E \to \G$ endowed with the $\VB$-groupoid structure $\V \toto E$ given by:
\begin{align}\label{Str}
\begin{split}
 \widetilde{\sour}(g, c, e) & = e, \,\, \widetilde{\tar}(g,  c, e) = g \cdot e  + \partial(c)\\
 (g_1, c_1, e_1) \bullet (g_2, c_2, e_2) & = (g_1g_2, c_1 + g_1 \cdot c_2 - \Omega_{(g_1,g_2)}(e_2), e_2)\\
 (g,c,e)^{-1} & = (g^{-1}, \,\,\Omega_{(g^{-1}, g)}(e) - g^{-1} \cdot c, \partial(c) + g \cdot e).
\end{split}
\end{align}
% 
%  Given a representation of $\G \toto M$ on a vector bundle $C \to M$, the {\bf semi-direct product} $\tar^*C \toto M$ is the $\VB$-groupoid defined as follows:
%  \begin{align*}
%  \widetilde{s}(g, c) & = \sour(g),\,\, \widetilde{\tar}(g,c) = \tar(g), \,\,\widetilde{1}(x) = (x, 0).\\ 
%  (g_1,c_1) \bullet (g_2,c_2) & = (g_1g_2, g_1 c_2 + c_1), \,\, (g,c)^{-1} = (g^{-1}, -g^{-1}\cdot c)
%  \end{align*}
%  Note that $E= M\times\{0\}$ and $C$ is the core of $\tar^*C \toto M$.
\end{example}

\begin{example}\em
A \textbf{double vector bundle} is a pair of vector bundles $\v \to E$ and $A \to M$ such that $\v$ is a $\VB$-groupoid over $A$ \footnote{A vector bundle can be seen as a Lie groupoid where $\sour=\tar=$ the bundle projection and multiplication is the fiberwise sum.}. In this case, one can prove that $\v \to A$ is also a $\VB$-groupoid over $E \to M$.
\end{example}

\paragraph{\bf Differentiation.}
A \textit{$\VB$-algebroid} is a pair of Lie algebroids $\v \to E$, $A \to M$ such that $p_\v: \v \to A$ is a vector bundle and the homogeneous structure on $\v$ defines Lie algebroid morphisms for each $\lambda$. 
% \begin{definition}\label{dfn:VB-alg}
% A $\VB$-algebroid is a square 
%  $$
% \begin{CD}
% \v @>p_\v>> A\\
% @V \pi VV   @VVV\\
% E @>> p > M 
% \end{CD}
% $$
% whose horizontal sides are vector bundles, the vertical sides are Lie algebroids, satisfying the following compatibility condition:
% denoting by $\h^\v$ and $\h$ the homogeneous structures on $\v$ and $E$, then,
% for each $\lambda\geq 0$, $\h^\v_{\lambda}: \v \Arrow \v$ is a Lie algebroid morphism over $\h_\lambda: E\to E$.
% \end{definition}
In particular, a $\VB$-algebroid is a double vector bundle. Let $C_\v=\ker(\pi) \cap \ker(p_\v)$ be its core bundle and $\partial_\v: C_\v \to E$ be the restriction of the anchor map $\rho_\v: \v \to TE$ to the core bundle.

There is an inclusion $\B: \Gamma(C_\v) \hookrightarrow \Gamma(E,\v)$ defined as follows:
\begin{equation}\label{B_inj}
\mathcal{B}c (e) = 0_e +_A c(x), \,\, e \in E_x,
\end{equation}
where $+_A$ is the sum on the vector bundle $\v \to A$. We shall refer to such sections as \textit{core sections}.

The Lie functor applied to a $\VB$-groupoid gives rise to a $\VB$-algebroid \cite{Bur-Cab-Hoy}. In the next Proposition, we investigate how core sections integrate to $\VB$-groupoids.
\begin{proposition}\label{right_core}
If $\v = \mathrm{Lie}(\V)$, then 
$$
C_\v = C, \,\,\partial_\v = \partial.
$$ 
Moreover, for $c \in \Gamma(C)$, the right-invariant vector field $\overrightarrow{\mathcal{B}c} \in \mathfrak{X}(\V)$ is the vertical lift \footnote{On a vector bundle $E \to M$, any section $u \in \Gamma(E)$ gives rise to a vector field $u^\uparrow \in \frakx(E)$ called \textit{the vertical lift of $u$} whose flow is given by $e \mapsto e + \epsilon u(x)$, for $e \in E_x$.} of $c_R \in \Gamma(\V)$: 
$$
\overrightarrow{\mathcal{B}c} = c_R^\uparrow.
$$
In particular, $[\B c_1, \B c_2] = 0$, for $c_1, c_2 \in \Gamma(C)$.
\end{proposition}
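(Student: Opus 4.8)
The plan is to establish the three assertions in order, treating the identification of cores as a warm-up and the vertical-lift formula as the heart of the argument.

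\emph{Cores and core anchors.} First I would identify $C_\v$ with $C$ by unwinding $\v = \mathrm{Lie}(\V) = \ker(d\widetilde{\sour})|_E$. A vector $\xi \in C_\v$ over $x \in M$ satisfies $\pi(\xi)=0_x\in E$ and $p_\v(\xi)=0_x\in A$; the first condition places $\xi$ in $T_{0_x}\V$, while the second, being the linearization of $p_\V\colon \V\to\G$, forces $\xi \in \ker(dp_\V)$, hence tangent to the fibre $\V_x = p_\V^{-1}(1_x)$. Thus $\xi \in T_{0_x}\V_x \cong \V_x$, and under this canonical identification the defining condition $d\widetilde{\sour}(\xi)=0$ becomes $\widetilde{\sour}(\xi)=0$, i.e. $\xi\in\ker(\widetilde{\sour})|_x = C_x$, giving $C_\v = C$. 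The same identification sends $\partial_\v(\xi)=\rho_\v(\xi)=d\widetilde{\tar}(\xi)$ to $\widetilde{\tar}(\xi)=\partial(\xi)$, since $\widetilde{\tar}|_{\V_x}$ is linear and its differential at $0_x$ is itself; hence $\partial_\v = \partial$.

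\emph{The vertical-lift formula.} Rather than compute $\overrightarrow{\B c}$ from scratch, I would verify that the candidate $X := c_R^\uparrow$ is tangent to the $\widetilde{\sour}$-fibres, right-invariant, and restricts to $\B c$ on the units $E$; since a right-invariant vector field is determined by its restriction to the units, this forces $c_R^\uparrow = \overrightarrow{\B c}$. Concretely $X(v) = \tfrac{d}{d\epsilon}\big|_0 (v + \epsilon\, c_R(g))$ with $g = p_\V(v)$. Tangency follows from $\widetilde{\sour}(c_R(g)) = \widetilde{\sour}(0_g) = 0$, so that $\widetilde{\sour}(v+\epsilon\, c_R(g))$ is constant in $\epsilon$. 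For the value at a unit $e\in E_x$ I would use $c_R(1_x) = c(x)\bullet 0_{1_x} = c(x)$, together with the observation that the core section is precisely a vertical lift, namely $\B c(e) = 0_e +_A c(x) = \tfrac{d}{d\epsilon}\big|_0(e + \epsilon\, c(x))$; here the second equality is obtained by representing $0_e$ and $c(x)$ by the curves $\epsilon\mapsto e$ and $\epsilon\mapsto \epsilon\, c(x)$ over the constant path $1_x$ in $\G$ and using that $+_A$ is the linearization of the fibrewise sum of $\V\to\G$.

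\emph{Right-invariance and the main obstacle.} The crux is the right-invariance of $X$, which I would extract from the interchange law. Writing $w = w + \epsilon\cdot 0_{g'}$ with $g' = p_\V(w)$, and noting that both $(v,w)$ and $(\epsilon\, c_R(g), \epsilon\, 0_{g'})$ are composable (the latter because the relevant source and target both vanish), the interchange law yields
\begin{equation*}
(v + \epsilon\, c_R(g)) \bullet w = v\bullet w + \epsilon\,\big(c_R(g)\bullet 0_{g'}\big),
\end{equation*}
and the identity $c_R(g)\bullet 0_{g'} = c_R(gg')$ (from $c_R(g)=c(\tar(g))\bullet 0_g$, $0_g\bullet 0_{g'} = 0_{gg'}$ and $\tar(gg')=\tar(g)$) shows $dR_w(X(v)) = X(vw)$. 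I expect this interchange-law computation, combined with the correct reading of $\B c(e)$ as a vertical lift, to be the main obstacle: everything hinges on matching the two vector-bundle structures on $\v$ and on tracking exactly which zeros and which additions are in play.

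\emph{Commutativity of core brackets.} Finally $[\B c_1,\B c_2]=0$ follows formally. The vertical lifts $(c_1)_R^\uparrow$ and $(c_2)_R^\uparrow$ commute because their flows are the commuting fibrewise translations $v\mapsto v+\epsilon (c_1)_R$ and $v\mapsto v+\delta (c_2)_R$; since $\sigma\mapsto\overrightarrow{\sigma}$ is an injective morphism (up to sign) from the Lie algebroid bracket to the commutator of vector fields, we get $\overrightarrow{[\B c_1,\B c_2]} = \pm[(c_1)_R^\uparrow,(c_2)_R^\uparrow]=0$, whence $[\B c_1,\B c_2]=0$.
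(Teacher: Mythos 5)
Your proof is correct, and all three parts go through; the difference from the paper lies in how the middle (and main) assertion is organized. The paper integrates the core section to the flow of bisections $b_\epsilon(e)=e+\epsilon\,c(x)$ of $\V \toto E$, writes the flow of $\overrightarrow{\B c}$ as $v \mapsto b_\epsilon(\widetilde{\tar}(v))\bullet v$, and uses the interchange law to identify this with $v \mapsto v + \epsilon\, c_R(g)$, i.e.\ with the flow of $c_R^\uparrow$. You instead start from the candidate $c_R^\uparrow$ and verify the defining properties of a right-invariant vector field --- tangency to the $\widetilde{\sour}$-fibres, the identity $TR_w(c_R^\uparrow(v))=c_R^\uparrow(v\bullet w)$ (again via the interchange law, now applied to $(v+\epsilon\,c_R(g))\bullet(w+0_{g'})$ together with $c_R(g)\bullet 0_{g'}=c_R(gg')$), and the restriction to the units being $\B c$ --- and then conclude by uniqueness of the right-invariant extension of a section of $\v \to E$. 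Both arguments ultimately rest on the same two ingredients, the interchange law and the identification $\B c(e)=\left.\frac{d}{d\epsilon}\right|_{\epsilon=0}(e+\epsilon\,c(x))$, which you justify carefully; what your route buys is that you never need the (standard but not entirely trivial) fact that the flow of $\overrightarrow{\B c}$ is right translation by the flow of bisections integrating $\B c$, replacing it by the more elementary uniqueness statement, whereas the paper's route has the advantage of exhibiting the flow of $\overrightarrow{\B c}$ explicitly, a pattern it reuses for linear sections in Proposition~\ref{lemma:right_linear}. Your treatment of $C_\v=C$, $\partial_\v=\partial$ (via the splitting of $T_{0_x}\V$) and of $[\B c_1,\B c_2]=0$ (via commuting vertical lifts and injectivity of $\sigma\mapsto\overrightarrow{\sigma}$) matches the paper's.
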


\begin{proof}
% Let us first prove that $C_\v = C$. For $x \in M$, $0_x \in E$, one has that $T_{0_x}\V = \V_x \oplus T_x \G = \V_x \oplus T_xM \oplus A_x$ and 
% $$
% T\widetilde{\sour}(v, X, a) = (\widetilde{\sour}(v), X) \in E_x \oplus T_xM = T_{0_x}E, \,\,\, Tp_\V(v,X,a) = X+a.
% $$
% Thus, the fiber of $\v$ over $0_x$ identifies with $C_x \oplus A_x$. Since $p_\v= (Tp_\V)|_{\ker(T\widetilde{\sour})}$, one gets that $C_x = \ker(\pi) \cap \ker(p_\v)$. The statement regarding the core anchor follows from $T\widetilde{\tar}(v,X,a) = (\widetilde{\tar}(v), X+\rho(a))$.
The first statament follows from expressing $T\widetilde{\sour}, \, Tp_\V$ and $T\widetilde{\tar}$ under the decomposition 
$T_{0_x}\V = \V_x \oplus T_x \G = \V_x \oplus T_xM \oplus A_x$ and $T_{0_x}E = E_x \oplus T_xM$, for $x \in M, \, 0_x \in E_x$.

A section $c: M \to C$ induces a flow of bisections of $\V$, $b_{\epsilon}: E \to \V$, by
$$
b_{\epsilon}(e) = e+ \epsilon \,c(x), \,\,\, \text{ for } e \in E_x.
$$
Upon differentiation, one obtains $\B c (e)= \left.\frac{d}{d\epsilon}\right|_{\epsilon=0} b_{\epsilon}(e)$. So, the flow of $\overrightarrow{\mathcal{B}c}$ is $\Fl^\epsilon_{\overrightarrow{\B c}}(v) = b_\epsilon(\widetilde{\tar}(v)) \bullet v$. But, for $v \in \V_g$, using the interchange law,
\begin{align*}
b_\epsilon(\widetilde{\tar}(v)) \bullet v & = (\widetilde{\tar}(v)+ \epsilon \,c(\tar(g)))\bullet (v + 0_{g}) = v + \epsilon \, c(\tar(g))\bullet 0_g \\
& = v + \epsilon \, c_R(g).
\end{align*}
The result now follows from differentiation. At last, the statement regarding the Lie bracket of $\B c_1, \, \B c_2$ follows from the fact that $[\mu_1^\uparrow, \mu_2^\uparrow] = 0$, for any sections $\mu_1, \mu_2: \G \to \V$.
\end{proof}

In the following, we shall denote the inclusion $C_x \hookrightarrow \v_{0_x}$ by $c \mapsto \overline{c}$:
$$
\overline{c} = \left.\frac{d}{d\epsilon}\right|_{\epsilon=0}(\epsilon c) \in  \v \subset T_{0_x}\V, \,\,\, x \in M, 0_x \in E.
$$

\begin{remark}\em \label{Bphi}
One can extend the construction of core sections $\mathcal{B}c$ to any fiber preserving map $\Phi: E \to C$. Indeed, 
$
\mathcal{B}\Phi (e) = 0_e +_A \overline{\Phi}(e),
$
defines a section of $\v \to E$ coming from differentiating the flow of bisections $b_\epsilon: E \to \V$, $b_\epsilon(e) = e+ \epsilon \, \Phi(e)$. In the case $\Phi$ is a vector bundle morphism, it is straightforward to check that
$
\overrightarrow{\mathcal{B}\Phi} = W_{\Phi_R},
$
the linear vector field on $\V$ associated to the endomorphism $\Phi_R: \V \to \V$, $\Phi_R(v)= \Phi(\widetilde{\tar}(v))\bullet 0_g$, for $v \in \V_g$.
\end{remark}

\medskip

\subsection{Fat groupoid and fat algebroid.}
The theory of derivations and linear vector fields is recalled in the appendix for the convenience of the reader. 

\subsubsection{Definitions and the fat representations}
Following \cite{Gra-Met1}, we define the \textit{fat category} of $\V$ as the category whose space of objects is $\mathcal{F}(\V)$, the space of pointwise splittings of the core exact sequence \eqref{core_ses}. More precisely, an element of $\mathcal{F}(\V)$ is a pair $(g,\b)$, where
$
\b: E_{\sour(g)} \to \V_g 
$
is a linear map satisfying $\widetilde{\sour}\circ \b = \mathrm{id}$. There is a Lie category structure \footnote{A Lie category is defined exactly as a Lie groupoid, except for the existence of inverse.} on $\mathcal{F}(\V) \toto M$ with source, target maps induced by the source and target of $\G$ and multiplication given by:
$$
(g_1, \b_1)(g_2, \b_2) = (g_1 g_2, \b_1 \cdot \b_2), \,\, (\b_1 \cdot \b_2)(e) = \b_1(\widetilde{\tar}(\b_2(e)))\bullet \b_2(e).
$$ 
There is a natural Lie category representation of $\mathcal{F}(\V) \toto M$ on the vector bundle $\V|_M \to M$  given as follows: 
$$
\widehat{\Psi}_{(g,b)}(v) = b(\widetilde{\tar}(v)) \bullet v \bullet b(\widetilde{\sour}(v))^{-1}, \,\, v \in \V_{\sour(g)}
$$
The representation $\widehat{\Psi}$ preserves the decomposition $\V|_M = E \oplus C$, inducing a representation $\Psi$ on the core anchor complex $\partial: C \to E$ given by 
\begin{equation}\label{fat_rep2}
\Psi_{(g,\b)}(e) = \widetilde{\tar}(\b(e)), \;\; \Psi_{(g,b)}(c) = \b(\partial(c)) \bullet c \bullet 0_{g^{-1}}.
\end{equation}

The invertible elements $(g,b)$ of $\F(\V)$ are those which satisfy the additional condition: $\Psi_{(g,b)}: E_{\sour(g)} \to E_{\tar(g)}$ is an isomorphism; they define a Lie groupoid $\mathcal{F}_{\rm inv}(\V) \toto M$ called \textit{the fat groupoid}. The map $b^{-1}: E_{\tar(g)} \to \V_{g^{-1}}$ is given by
$$
b^{-1}(e) = b(\Psi_{b}^{-1}(e))^{-1}.
$$
Note that the bisections of $\mathcal{F}_{\rm inv}(\V) \toto M$ correspond bijectively to the bisections of $\V \toto E$ satisfying the additional property of being a vector bundle morphism from $E$ to $\V$. The projection $\mathcal{F}(\V) \to \G$ defines an affine bundle with fiber over $g \in \G$ modeled on ${\rm Hom}(E_{\sour(g)}, C_{\tar(g)})$. Also, \eqref{fat_rep2} induces a representation of the groupoid of invertible elements $\F_{\rm inv}(\V) \toto M$ on $\partial: C \to E$ called \textit{the fat representation}. 

\begin{example}\em
 For a representation of $\G \toto M$ on a vector bundle $C \to M$, consider the semidirect product $\tar^*C \toto M$ (see Example \ref{semi-direct}). In this case, $E=0$ and one can check that
 $$
 \F(\tar^*C) = \F_{\rm inv}(\tar^*C) \cong \G
 $$
 Also, the fat representation on $C$ is isomorphic to the representation  of $\G$ on $C$. 
\end{example}

Let us now study the infinitesimal picture. Let $\pi: \v \to E$ be a $\VB$-algebroid over $A \to M$ and recall that there is an underlying double vector bundle structure on $\v$. As such, one can define $\F(\v)$ and $\F_{\rm inv}(\v)$ as above. The fact that $\widetilde{\sour}= \widetilde{\tar} = \pi$ in this case implies that $\F(\v) = \F_{\rm inv}(\v)$. Also, $\F(\v)$ is a vector bundle which fits into an exact sequence
\begin{equation}\label{linear_ses}
0 \to \Hom(E,C) \to \F(\v) \stackrel{\pr} \to A \to 0,
\end{equation}
where the inclusion $\Hom(E,C) \to \F(\v)$ (at the level of sections) is exactly $\Phi \mapsto \mathcal{B}\Phi$ (see Remark \ref{Bphi}). In the following, we shall denote $\Gamma(\F(\v))$ as $\Gamma_{lin}(E,\v)$ and refer to these sections as \textit{the linear sections of $\v$}. It is important to recall that $\{\B c, \eta\}$, for $c \in \Gamma(C)$ and $\eta \in \Gamma_{lin}(E,\v)$, generate $\Gamma(E, \v)$ as a $C^\infty(E)$-module (see \cite[Prop.~2.2]{Mac-doubles}).

\begin{proposition}\label{lemma:right_linear}
If $\v = \mathrm{Lie}(\V)$, then $\mathcal{F}(\v)$ is the Lie algebroid of $\mathcal{F}_{\rm inv}(\V)$. Moreover, given a section $\eta \in \Gamma_{lin}(E,\v)$, the corresponding right-invariant vector field $\overrightarrow{\eta} \in \mathfrak{X}(\V)$ is a linear vector field.
\end{proposition}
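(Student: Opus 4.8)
The plan is to prove the two assertions of Proposition \ref{lemma:right_linear} separately, establishing first that $\F(\v)$ is the Lie algebroid of the fat groupoid $\F_{\rm inv}(\V)$, and then that right-invariant vector fields from linear sections are linear. Throughout I would lean heavily on the functoriality of the Lie functor applied to the $\VB$-groupoid structure maps, together with the explicit flow computation already carried out in Proposition \ref{right_core}.

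For the first statement, I would observe that the fat groupoid $\F_{\rm inv}(\V) \toto M$ is built entirely from the structure maps $\widetilde{\sour}, \widetilde{\tar}, \bullet, \inv$ of $\V$ via the formulas recalled above (its source, target, and multiplication are induced by those of $\G$, and its representation $\widehat\Psi$ is written in terms of $\bullet$ and the local section $b$). Applying the Lie functor to these maps should produce, by naturality, exactly the structure of $\F(\v)$ as recalled from the double vector bundle $\v = \FLie(\V)$: concretely, differentiating the source-splitting condition $\widetilde\sour \circ b = \mathrm{id}$ yields linear sections of $\v$, and differentiating the multiplication $(\b_1 \cdot \b_2)(e) = \b_1(\widetilde\tar(\b_2(e)))\bullet \b_2(e)$ recovers the Lie-algebroid bracket on $\Gamma_{lin}(E,\v)$. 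The cleanest way to organize this is to identify a path $\epsilon \mapsto (g_\epsilon, b_\epsilon)$ of elements of $\F_{\rm inv}(\V)$ emanating from the unit over $x \in M$ and show that its derivative is precisely an element $\eta \in \Gamma_{lin}(E,\v)$ covering $\dot g_0 \in A_x$; the exact sequences \eqref{core_ses} and \eqref{linear_ses} then match up under differentiation, since differentiating $\F_{\rm inv}(\V) \to \G$ gives $\pr: \F(\v) \to A$ and the fibers $\Hom(E_{\sour(g)}, C_{\tar(g)})$ linearize to $\Hom(E,C)$.

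For the second statement, the strategy is to compute the flow of $\overrightarrow{\eta}$ explicitly and verify it consists of linear maps, which by the characterization of linear vector fields recalled in the Appendix is equivalent to linearity. Given $\eta \in \Gamma_{lin}(E,\v)$, choose a path of bisections $b_\epsilon: E \to \V$ of $\F_{\rm inv}(\V)$ (each a vector-bundle morphism, per the remark that bisections of the fat groupoid are exactly the bundle-morphism bisections of $\V$) with $\left.\frac{d}{d\epsilon}\right|_0 b_\epsilon = \eta$. Mirroring the computation in Proposition \ref{right_core}, the flow of the right-invariant field is $\Fl^\epsilon_{\overrightarrow{\eta}}(v) = b_\epsilon(\widetilde\tar(v)) \bullet v$; because each $b_\epsilon$ is a vector bundle morphism and $\bullet$ respects the linear structure via the interchange law, the map $v \mapsto b_\epsilon(\widetilde\tar(v)) \bullet v$ is fiberwise linear over its base map on $\G$. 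Hence the flow is by vector bundle automorphisms of $\V \to \G$, so $\overrightarrow{\eta}$ is a linear vector field.

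The main obstacle I anticipate is the first statement, specifically making the identification of $\F(\v)$ with $\FLie(\F_{\rm inv}(\V))$ fully rigorous rather than merely plausible by naturality. The fat groupoid is only a Lie groupoid on the invertible locus, and one must check that differentiating the fat \emph{category} $\F(\V)$ and then restricting to invertibles agrees with the algebroid obtained directly, and that the bracket computed from the multiplication $\b_1 \cdot \b_2$ genuinely reproduces the bracket on linear sections of $\v$ — this last point requires unwinding the term $\b_1(\widetilde\tar(\b_2(e)))$, whose derivative mixes the quasi-action with the curvature and is exactly where the fat representation $\nabla$ enters. By contrast, the second statement should be essentially a flow computation parallel to the core-section case, with the interchange law doing all the work to guarantee fiberwise linearity.
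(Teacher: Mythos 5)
Your second paragraph contains the step that actually needs proof, and as written it is circular. You ``choose a path of bisections $b_\epsilon: E \to \V$ of $\F_{\rm inv}(\V)$'' with $\left.\frac{d}{d\epsilon}\right|_{0} b_\epsilon = \eta$ and then assert that $\Fl^\epsilon_{\overrightarrow{\eta}}(v) = b_\epsilon(\widetilde{\tar}(v))\bullet v$. That identity fails for an arbitrary path with the correct derivative at $\epsilon=0$: it holds only when $b_\epsilon$ is the one-parameter flow of bisections generated by $\eta$, i.e.\ $b_\epsilon = \Fl^\epsilon_{\overrightarrow{\eta}}|_E$. So the content of the second statement is precisely that $\Fl^\epsilon_{\overrightarrow{\eta}}|_{E_x}$ is a vector bundle morphism for every $\epsilon$ --- which is equivalent to the linearity of $\overrightarrow{\eta}$ you are trying to establish. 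Producing the path instead by integrating $\eta$ inside $\F_{\rm inv}(\V)$ requires (i) the inclusion $\F(\v) \subset \mathrm{Lie}(\F_{\rm inv}(\V))$, which your first paragraph only sketches by naturality and a matching of exact sequences, and (ii) the identification of that integrated bisection flow with $\Fl^\epsilon_{\overrightarrow{\eta}}|_E$, which you never address. The paper closes both gaps at once with a uniqueness-of-flows argument: $\h^\V_{1/\lambda}\circ \Fl^\epsilon_{\overrightarrow{\eta}}\circ \h_\lambda$ is again the flow of a right-invariant vector field, its generator is $\eta$ because $\eta$ commutes with the homogeneous structures, hence $\Fl^\epsilon_{\overrightarrow{\eta}}\circ \h_\lambda = \h^\V_\lambda \circ \Fl^\epsilon_{\overrightarrow{\eta}}$; this simultaneously gives the linearity of $\overrightarrow{\eta}$ and the linearity of $b_\epsilon = \Fl^\epsilon_{\overrightarrow{\eta}}|_{E_x}$, i.e.\ the missing inclusion. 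Your observation that the interchange law makes $v \mapsto b_\epsilon(\widetilde{\tar}(v))\bullet v$ fiberwise linear once $b_\epsilon$ is known to be a bundle morphism is correct, but it is the hypothesis on $b_\epsilon$, not the conclusion, that carries the difficulty.

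Two smaller points. For the inclusion $\mathrm{Lie}(\F_{\rm inv}(\V)) \subset \F(\v)$ you must also justify that the derivative $\mathfrak{b} = \left.\frac{d}{d\epsilon}\right|_{0} b_\epsilon : E_x \to \v_a$ of a path of pointwise splittings is linear with respect to the bundle structure $\v \to A$; this is not automatic, and the paper obtains it by differentiating $b_\epsilon \circ \h_\lambda = \h^\V_\lambda \circ b_\epsilon$ and using the characterization of linear maps as those commuting with the homogeneous structures. Conversely, the obstacle you single out as the main one --- recovering the bracket on $\Gamma_{lin}(E,\v)$ from the multiplication $\b_1 \cdot \b_2$ --- is not where the difficulty lies: the paper's proof does not treat the bracket separately, and the proposition is settled once the underlying vector bundles are identified and the flows are matched as above.
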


\begin{proof}
A path on the source fiber of $\mathcal{F}_{\rm inv}(\V)$ starting at the identity section, $\b_{\epsilon}: E_x \to \V_{g_\epsilon}$, $\sour(g_\epsilon) = x$, defines upon differentiation a map $\mathfrak{b}: E_x \to \v_a$, where $a = \left.\frac{d}{d\epsilon}\right|_{\epsilon=0} g_\epsilon$. The linearity of $\mathfrak{b}$ follows from differentiating
$
\b_{\epsilon} \circ \h_{\lambda} = \h^\V_{\lambda} \circ \b_{\epsilon}
$
and the characterization of linear maps as those which commutes with the homogeneous structures \cite{Gra-Rot}. This proves that the Lie algebroid of $\F_{\rm inv}(\V)$ is contained in $\F(\v)$. Conversely, consider a linear map $\mathfrak{b}: E_x \to \v_a$ satisfying $\pi \circ \mathfrak{b} = {\rm id}$ and let $\eta: E \to \v$ be a linear section of $\v \to E$ such that $\eta|_{E_x} = \mathfrak{b}$. Define
\begin{equation}\label{bis_flow}
\b_\epsilon = \Fl^\epsilon_{\overrightarrow{\eta}}|_{E_x}: E_x \to \V_{\Fl_{\rho(\alpha)}^{\epsilon}},
\end{equation}
where $\alpha = \pr(\eta) \in \Gamma(A)$. As $\left.\frac{d}{d\epsilon}\right|_{\epsilon=0} \b_\epsilon  = \mathfrak{b}$, the result will follow if we prove that $\b_\epsilon$ is linear for every $\epsilon$.  It is straightforward to check that $\h^\V_{1/\lambda} \circ \Fl^\epsilon_{\overrightarrow{\eta}} \circ \h_{\lambda}$ is also the flow of a right invariant vector field, for all $\lambda > 0$. Also, 
$$
\left.\frac{d}{d\epsilon}\right|_{\epsilon=0} \h^\V_{1/\lambda} \circ \Fl^\epsilon_{\overrightarrow{\eta}} \circ \h_{\lambda}(e) =  \h^\v_{1/\lambda} \circ \eta \circ \h_{\lambda}(e) = \eta(e), \,\,\forall \, e \in E,
$$
by linearity of $\eta$. By uniqueness of integration, it follows that $\Fl^\epsilon_{\overrightarrow{\eta}} \circ \h_{\lambda} = \h^\V_\lambda \circ \Fl_{\overrightarrow{\eta}}^\epsilon$, which implies linearity both of $\b_\epsilon$ and $\overrightarrow{\eta}$. This concludes the proof. 
\end{proof}

We shall refer to $\F(\v)$ as \textit{the fat algebroid}. The next Proposition investigates how the fat representation \eqref{fat_rep2} differentiates to give a representation of $\F(\v)$ on $\partial: C \to E$.

\begin{proposition}\label{prop:fat_rep}
Let $\v = \mathrm{Lie}(\V)$ and $\nabla$ be the $\F(\v)$-connection on $\partial: C \to E$ obtained by differentiation of \eqref{fat_rep2}. Then,
\begin{equation}\label{fat_rep1}
\mathcal{B} (\nabla_\eta \,c) = [\eta, \mathcal{B}c],\, \,\,
\nabla_{\eta} \,u = \Delta_{\rho_v(\eta)}^\top(u), 
\end{equation}
where $\eta \in \Gamma_{lin}(E,\v),\, u \in \Gamma(E), \, c \in \Gamma(C)$ and $\Delta_{\rho_v(\eta)}^\top: \Gamma(E) \to \Gamma(E)$ is the adjoint derivation associated to the linear vector field $\rho_\v(\eta) \in \frakx(E)$. 
\end{proposition}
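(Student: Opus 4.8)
The plan is to translate both equalities, which are statements about sections of $\v \to E$, into statements about the associated right-invariant vector fields on $\V$, where Propositions \ref{right_core} and \ref{lemma:right_linear} together with the calculus of linear vector fields and derivations recalled in the Appendix can be applied. Throughout I use that $\eta \mapsto \overrightarrow{\eta}$ is injective and (with our conventions) intertwines the bracket of $\v$ with the Lie bracket of vector fields, that $\overrightarrow{\B c} = c_R^\uparrow$ by Proposition \ref{right_core}, and that $\overrightarrow{\eta}$ is a linear vector field by Proposition \ref{lemma:right_linear}; I write $\Delta_\eta$ for its associated derivation of $\Gamma(\V)$. I also use the basic identity $[\overrightarrow{\eta}, \mu^\uparrow] = (\Delta_\eta \mu)^\uparrow$ relating a linear vector field to a vertical lift.

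I would prove the second equation first, as it is the more transparent. Since $\overrightarrow{\eta}$ is right-invariant, its flow has the form $\Fl^\epsilon_{\overrightarrow{\eta}}(v) = \b_\epsilon(\widetilde{\tar}(v)) \bullet v$, where $\b_\epsilon = \Fl^\epsilon_{\overrightarrow{\eta}}|_E$ is the family of bisections of $\F_{\rm inv}(\V)$ from \eqref{bis_flow}; applying $\widetilde{\tar}$ shows that $\overrightarrow{\eta}$ is $\widetilde{\tar}$-projectable with projection $\rho_\v(\eta)$. Evaluating the first formula in \eqref{fat_rep2} along this flow gives, for $e \in E$,
\begin{equation*}
\Psi_{\b_\epsilon}(e) = \widetilde{\tar}(\b_\epsilon(e)) = \widetilde{\tar}(\Fl^\epsilon_{\overrightarrow{\eta}}(e)) = \Fl^\epsilon_{\rho_\v(\eta)}(e),
\end{equation*}
so the flow of the fat representation on $E$ is exactly the flow of the linear vector field $\rho_\v(\eta)$. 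Differentiating at $\epsilon=0$ identifies $\nabla_\eta$ on $\Gamma(E)$ with the Lie derivative along $\rho_\v(\eta)$, which is by definition $\Delta^\top_{\rho_\v(\eta)}$; this is the second identity.

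For the first identity I would compare both sides after applying $\overrightarrow{(\cdot)}$. On one hand $\overrightarrow{\B(\nabla_\eta c)} = (\nabla_\eta c)_R^\uparrow$ by Proposition \ref{right_core}. On the other hand,
\begin{equation*}
\overrightarrow{[\eta, \B c]} = [\overrightarrow{\eta}, \overrightarrow{\B c}] = [\overrightarrow{\eta}, c_R^\uparrow] = (\Delta_\eta c_R)^\uparrow,
\end{equation*}
using the morphism property, then Proposition \ref{right_core}, then the linear-vector-field identity above. As $\eta \mapsto \overrightarrow{\eta}$ and $\mu \mapsto \mu^\uparrow$ are injective, the claim reduces to the single identity $\Delta_\eta c_R = (\nabla_\eta c)_R$ in $\Gamma(\V)$.

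This reduction is the main obstacle, and I would settle it by a direct flow computation. Here $\Delta_\eta c_R = \frac{d}{d\epsilon}\big|_{\epsilon=0}(\Fl^\epsilon_{\overrightarrow{\eta}})^* c_R$, and one evaluates $(\Fl^\epsilon_{\overrightarrow{\eta}})^{-1}\big(c_R(\Fl^\epsilon_{\overrightarrow{\alpha}}(g))\big)$ with $\alpha=\pr(\eta)$, expanding via the interchange law and the formula $c_R(g)=c(\tar(g))\bullet 0_g$. Two things must be checked: that this family stays in the sub-bundle $\tar^*C$, so that $\Delta_\eta c_R$ is again a right-core section, and that the section of $C$ obtained after differentiation is precisely $\nabla_\eta c$. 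The second point is where the conjugation formula $\Psi_{(g,\b)}(c) = \b(\partial(c)) \bullet c \bullet 0_{g^{-1}}$ of \eqref{fat_rep2} enters: the left translation by $\b_\epsilon$ in the flow, together with the $0_g$ and inversion terms produced by the interchange law, reassembles into the conjugation defining $\Psi$, so that differentiating recovers the connection $\nabla$. Matching signs and base points in this step is the only delicate part; once it is done, injectivity of $\overrightarrow{(\cdot)}$ yields $\B(\nabla_\eta c) = [\eta, \B c]$.
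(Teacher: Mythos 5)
Your proposal is correct and follows essentially the same route as the paper: the identity on $E$ via projectability of $\overrightarrow{\eta}$ under $\widetilde{\tar}$ (which the paper dismisses as straightforward), and the identity on $C$ via the reduction $\overrightarrow{\B(\nabla_\eta c)} = (\nabla_\eta c)_R^\uparrow$ versus $\overrightarrow{[\eta,\B c]} = [\overrightarrow{\eta}, c_R^\uparrow] = (\Delta^\top_{\overrightarrow{\eta}} c_R)^\uparrow$, settled by the flow computation $\Fl_{\overrightarrow{\eta}}^\epsilon(c\bullet 0_g) = (b_\epsilon(\partial c)\bullet c \bullet 0_{g_\epsilon^{-1}})\bullet 0_{g_\epsilon g} = \Psi_{(g_\epsilon,b_\epsilon)}(c)\bullet 0_{g_\epsilon g}$. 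The "delicate part" you flag is exactly this two-line rewriting in the paper, so nothing essential is missing.
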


\begin{proof}
It is straightforward to check the equality restricted to $E$. Given $\eta \in \Gamma_{lin}(E,\v)$, the flow of the right-invariant (linear) vector field $\overrightarrow{\eta} \in \mathfrak{X}(\V)$, $\Fl_{\overrightarrow{\eta}}^\epsilon: \V \to \V$, preserve the subbundle $\tar^*C \hookrightarrow \V$. Indeed,
\begin{equation}\label{eq:pull_back}
\Fl_{\overrightarrow{\eta}}^\epsilon(c \bullet 0_g) = b_\epsilon(\partial c) \bullet c \bullet 0_{g} = (b_\epsilon(\partial c) \bullet c \bullet 0_{g_{\epsilon}^{-1}}) \bullet 0_{g_\epsilon g}
\end{equation}
where $b_\epsilon: E_{\tar(g)} \to \V_{g_\epsilon} \in \F_{\rm inv}(\V)$ is \eqref{bis_flow}. Note that we can rephrase \eqref{eq:pull_back} as follows: if $F: \tar^*C \to C$ is the pull-back map, then $F \circ \Fl_{\overrightarrow{\eta}}^\epsilon = \Psi_{(g_\epsilon, b_\epsilon)}\circ F$. This implies that  $\Delta^\top_{\overrightarrow{\eta}}(c_R) = (\nabla_\eta c)_R$, for $c \in \Gamma(C)$. But, from Proposition \ref{right_core} and \eqref{der_adjoint},
$$
\overrightarrow{\mathcal{B}(\nabla_\eta c)} = (\nabla_\eta c)_R^\uparrow = (\Delta^\top_{\overrightarrow{\eta}}(c_R))^\uparrow = [\overrightarrow{\eta}, c_R^\uparrow] = [\overrightarrow{\eta}, \overrightarrow{\mathcal{B}c}] = \overrightarrow{[\eta,\mathcal{B}c]}.
$$
This concludes the proof.
\end{proof}

\begin{remark}\em 
We would like to emphasize that it follows from Propositions \ref{right_core}, \ref{lemma:right_linear} and \ref{prop:fat_rep} that   
\begin{align}
\label{VB1} [\Gamma_{\rm lin}(E, \v), \Gamma_{\rm lin}(E, \v)] & \subset \Gamma_{\rm lin}(E, \v)\\
\label{VB2} [\Gamma_{\rm lin}(E, \v), \Gamma(C)] & \subset \Gamma(C)\\
\label{VB3} [\Gamma(C), \Gamma(C)] & = \{0\},
\end{align}
when $\v = \mathrm{Lie}(\V)$. These relations can be used as axioms to define a $\VB$-algebroid structure on a double vector bundle as was done in \cite{Gra-Met2}. From this point of view, it follows directly from the definition that formulas \eqref{fat_rep1} extend to arbitrary $\VB$-algebroids giving a representation on $\partial: C \to E$. It is important to point out that the equivalence between the definition of $\VB$-algebroids we use here and the one given in \cite{Gra-Met2} was proved in \cite{Bur-Cab-Hoy}.
\end{remark}

\subsubsection{Example: jet groupoid and jet algebroid.}

The jet groupoid $J^1\G$ is the Lie groupoid of 1-jets of bisections of $\G$. Its Lie algebroid is the jet algebroid $J^1A$, the space of 1-jets of sections of $A$. It is straightforward to see that
$$
J^1\G = \F_{\rm inv}(T\G), \,\,J^1A = \F(TA).
$$
The fat representation \eqref{fat_rep2} of $J^1\G$ on the anchor complex $\rho: A \to TM$ is called the adjoint representation \cite{CSS, Ev-Lu-We}. 

The jet prolongation $\Gamma(A) \ni \alpha \mapsto j^1 \alpha \in \Gamma(J^1A)$ splits the short exact sequence \eqref{linear_ses} on the  level of sections giving rise to the so called \textit{classical Spencer operator} $D^{\rm clas}: \Gamma(J^1A) \to \Omega^1(M,A)$. More concretely, for $\eta \in \Gamma(J^1A)$, $\alpha= \pr(\eta)$: 
\begin{equation}\label{Spencer_decomp}
\eta = j^1\alpha - \B D^{\rm clas}(\eta),
\end{equation}
where $\B D^{\rm clas}(\eta) \in \Gamma(J^1A)$ is the section coming from $D^{\rm clas}(\eta): TM \to A$ (see Remark \ref{Bphi}).
% \begin{remark}
%  Under the identification $J^1A = \F(TA)$, it is straightforward to check that $j^1\alpha$ corresponds to the linear section $T\alpha: TM \to TA$ (the derivative of $\alpha: M \to A$). Also, the decomposition \eqref{Spencer_decomp} translates to
%  $$
%  \eta(X)= T\alpha(X) -_{TM} (T0(X) + \overline{D^{\rm clas}_X(\eta)})
%  $$
% \end{remark}

The fat representation of $J^1A$ on $\rho: A \to TM$ is given by:
\begin{equation}\label{jet_representation}
\nabla_{\eta} \,\beta = [\alpha, \beta] + D^{\rm clas}_{\rho(\beta)}(\eta), \,\,\, \nabla_{\eta}\, X = [\rho(\alpha), X] + \rho(D^{\rm clas}_X(\eta)), 
\end{equation}
for $ \alpha = \pr(\eta), \, \beta \in \Gamma(A),\, X \in \frakx(M)$.

From Lemma \ref{lemma:right_linear}, there is a linear vector field (hence a derivation) on $T\G$ associated to any section of $J^1A$. The following proposition gives a characterization of these derivations. 

\begin{proposition}
For $\eta \in \Gamma(J^1A)$, the corresponding derivation $\Delta_\eta: \Gamma(T\G) \to \Gamma(T\G)$ is given by
\begin{equation}\label{right_inv_der}
\Delta_\eta(U)(g) = [\overrightarrow{\pr(\eta)}, U](g) + D^{\rm clas}_{T\tar(U(g))}(\eta) \bullet 0_g,
\end{equation}
\end{proposition}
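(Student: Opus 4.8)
The plan is to reduce the computation to the two natural pieces of the classical Spencer decomposition \eqref{Spencer_decomp}, namely $\eta = j^1\alpha - \B D^{\rm clas}(\eta)$ with $\alpha = \pr(\eta)$, and to recognize the two summands of \eqref{right_inv_der} as the separate contributions of these pieces. Since $\eta \mapsto \overrightarrow{\eta}$ is additive and the passage from a linear vector field to its adjoint derivation is $\R$-linear, we have $\Delta_\eta = \Delta_{j^1\alpha} - \Delta_{\B D^{\rm clas}(\eta)}$, where I write $\Delta_\eta = \Delta^\top_{\overrightarrow{\eta}}$ for the adjoint derivation of the right-invariant linear vector field $\overrightarrow{\eta}$ on $\V = T\G$ (here $E = TM$, $C = A$, $\v = TA$ and $\F(\v) = J^1A$). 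Throughout I would use the vertical-lift characterization $(\Delta^\top_X U)^\uparrow = [X, U^\uparrow]$ of the adjoint derivation.

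For the core piece, set $\Phi := D^{\rm clas}(\eta)\colon TM \to A$, viewed as a vector bundle morphism from $E=TM$ to $C=A$. By Remark \ref{Bphi}, $\overrightarrow{\B\Phi} = W_{\Phi_R}$ is the linear vector field of the endomorphism $\Phi_R(v) = \Phi(T\tar(v)) \bullet 0_g$, $v \in T_g\G$. For any endomorphism $\phi$ of $\V$ covering the identity, a short flow computation (using that $W_\phi$ has fiberwise flow $\exp(t\phi)$ and $U^\uparrow$ has flow $v \mapsto v + t\,U(\pi(v))$) gives $[W_\phi, U^\uparrow] = -(\phi\circ U)^\uparrow$, hence $\Delta^\top_{W_\phi}(U) = -\phi\circ U$. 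Applying this to $\phi = \Phi_R$ yields
\begin{equation*}
-\Delta_{\B\Phi}(U)(g) = \Phi_R(U(g)) = \Phi\big(T\tar(U(g))\big)\bullet 0_g = D^{\rm clas}_{T\tar(U(g))}(\eta)\bullet 0_g,
\end{equation*}
which is exactly the second term of \eqref{right_inv_der}.

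The heart of the argument, and what I expect to be the main obstacle, is the identification of the jet piece: I claim that $\overrightarrow{j^1\alpha}$ equals the tangent (complete) lift $(\overrightarrow{\alpha})^T$ of the right-invariant vector field $\overrightarrow{\alpha} \in \frakx(\G)$. To prove this I would compare flows. Writing $\phi_\epsilon$ for the (local) flow of bisections of $\G$ integrating $\alpha$, so that $\Fl^\epsilon_{\overrightarrow{\alpha}}(g) = \phi_\epsilon(\tar(g))\,g$, the corresponding flow of bisections of $\F_{\rm inv}(T\G) = J^1\G$ integrating $j^1\alpha$ is $b_\epsilon = T\phi_\epsilon$, under the identification $j^1\psi \leftrightarrow T\psi$ of bisections of $J^1\G$ with the vector-bundle-morphism bisections of $T\G$. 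Using the right-invariant flow formula $\Fl^\epsilon_{\overrightarrow{\eta}}(v) = b_\epsilon(\widetilde{\tar}(v))\bullet v$ (cf. \eqref{bis_flow} in the proof of Proposition \ref{lemma:right_linear}) together with the fact that the multiplication $\bullet$ on $T\G$ is the tangent of the multiplication $\m$ on $\G$, I obtain
\begin{equation*}
\Fl^\epsilon_{\overrightarrow{j^1\alpha}}(v) = T\phi_\epsilon\big(T\tar(v)\big)\bullet v = T\m\big(T\phi_\epsilon(T\tar(v)),\, v\big) = T\Fl^\epsilon_{\overrightarrow{\alpha}}(v),
\end{equation*}
so that $\overrightarrow{j^1\alpha} = (\overrightarrow{\alpha})^T$. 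Since the flow of $(\overrightarrow{\alpha})^T$ is the tangent lift of the flow of $\overrightarrow{\alpha}$, the Lie derivative of a section $U \in \frakx(\G)$ of $T\G \to \G$ along it is the ordinary Lie bracket, i.e. $\Delta_{j^1\alpha}(U) = [\overrightarrow{\alpha}, U] = [\overrightarrow{\pr(\eta)}, U]$, the first term of \eqref{right_inv_der}.

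Finally I would assemble the pieces:
\begin{equation*}
\Delta_\eta(U)(g) = \Delta_{j^1\alpha}(U)(g) - \Delta_{\B\Phi}(U)(g) = [\overrightarrow{\pr(\eta)}, U](g) + D^{\rm clas}_{T\tar(U(g))}(\eta)\bullet 0_g,
\end{equation*}
which is precisely \eqref{right_inv_der}. The only delicate points are bookkeeping: the sign in $\Delta^\top_{W_\phi} = -\phi$, fixed by the convention $(\Delta^\top_X U)^\uparrow = [X, U^\uparrow]$ of the appendix, and the identification $b_\epsilon = T\phi_\epsilon$, for which I would make explicit the isomorphism $J^1\G \cong \F_{\rm inv}(T\G)$ at the level of bisections and check that $T\phi_\epsilon$ has infinitesimal generator $j^1\alpha$.
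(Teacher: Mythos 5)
Your proposal is correct and follows essentially the same route as the paper: decompose $\eta = j^1\alpha - \B D^{\rm clas}(\eta)$ via the classical Spencer operator, identify $\overrightarrow{j^1\alpha}$ with the tangent lift $(\overrightarrow{\alpha})^T$ so that its adjoint derivation is $[\overrightarrow{\alpha},\cdot]$, and handle the core piece through Remark \ref{Bphi} with the sign flip coming from taking adjoints of $W_{\Phi_R}$. The only difference is that you prove the tangent-lift identification by an explicit flow comparison, whereas the paper simply cites \cite[Eq.~(45), \S 9.7]{Mckz2}; your computation of that fact, and of the sign $\Delta^\top_{W_\phi}=-\phi$, is correct.
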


\begin{proof}
Let $\alpha: M \to A$. The linear section of $TA \to TM$ corresponding to $j^1\alpha$ is the differentiation of $\alpha$ as a map,  $T\alpha: TM \to TA$. As such, it is well-known that the right-invariant vector field on $T\G$ corresponding to $T\alpha$ is the tangent lift of $\overrightarrow{\alpha}$ \cite[Eq.~(45), \S 9.7]{Mckz2}. Hence, it follows from Example \ref{ex:tan_lift} that
\begin{equation}\label{jet_deriv}
\Delta_{j^1\alpha}^\top = [\overrightarrow{\alpha}, \cdot].
\end{equation}
Now, decompose a section $\eta \in \Gamma(J^1A)$ as $\eta = j^1\alpha - \B D^{\rm clas}(\eta)$ and the result will follow from Remark \ref{Bphi} applied to the morphism $D^{\rm clas}(\eta): TM \to A$; the change in sign is the result of taking adjoints (see Remark \ref{der_linear}). 
\end{proof}

\subsection{Dualization.}
For a $\VB$-groupoid $\V \toto E$, its dual has a $\VB$-groupoid structure $\V^* \toto C^*$. We shall prove here that the fat groupoids of $\V$ and $\V^*$ are isomorphic under an adjoint operation. The associated Lie algebroid morphism recovers the correspondence introduced by Mackenzie \cite{Mac-doubles} between linear sections of a double vector bundle and its dual. This will allow us describe the right-invariant vector fields of $\V^*$ coming from linear sections. 

Let us briefly recall the $\VB$-groupoid structure of $\V^* \toto C^*$. The source and target maps are determined by
$$
\<\widetilde{\sour}(\psi), c(\sour(g))\> = \<\psi, c_L(g)\>, \,\,\, \<\widetilde{\tar}(\psi), c(\tar(g))\> = \<\psi, c_R(g)\>, \,\, c \in \Gamma(C), \, \psi \in \V^*_g.
$$
The multiplication is determined by 
\begin{equation}\label{dual_mult}
\<\psi_1 \bullet \psi_2, v_1 \bullet v_2 \> = \<\psi_1, v_1\> + \<\psi_2, v_2\>, \,\,\, \psi_i \in \V_{g_i}^*, v_i \in \V_{g_i}, \, i=1,2.
\end{equation}
Using the decomposition $\V|_M = E\oplus C$, one has that the unit map $C^* \hookrightarrow \V^*|_M$ is the identification $C^* = \mathrm{Ann}(E)$. It is straightforward to check that $E^*$ can be identified with the core of $\V^* \toto C^*$ as follows: given $v \in \V_x, x \in M$, an element $\varphi \in E^*_x$ defines a covector in the core of $\V^*$ by
$$
\<\varphi, v\> = \<\varphi, \partial(c)+e\>, \text{ where } v = c+e \in C_x\oplus E_x.
$$ 
Note that the core anchor for $\V^* \toto C^*$ is $\partial^*:E^* \to C^*$.

\begin{example}\em
The dual $\VB$-groupoid corresponding to $T\G \toto TM$ is the cotangent Lie groupoid $T^*\G \toto A^*$. Also, the dual $\VB$-groupoid corresponding to the semi-direct product $\tar^*C \toto M$ is the (right) action groupoid $C^*\rtimes \G \toto C^*$.
\end{example}

\begin{example}\em
For a double vector bundle,
$$
\begin{CD}
\v @>>>A\\
@VVV   @VVV\\
E @>>> M
\end{CD}
$$
one can dualize both the horizontal and the vertical $\VB$-groupoid structure giving rise to two different $\VB$-groupoids which are also double vector bundles:
$$
\begin{CD}
\v_A^* @>>> A\\
@VVV @VVV\\
C^* @>>> M 
\end{CD}
\hspace{15pt}
\text{ and }
\hspace{15pt}
\begin{CD}
\v_E^* @>>> C^*\\
@VVV @VVV\\
E @>>> M.
\end{CD}
$$
There is a non-degenerate pairing $\dbrac{\cdot, \cdot}: \v_E^* \times_{C^*} \v_A^* \to \R$ given as
\begin{equation}\label{double_brac}
\dbrac{\gamma, \phi} = \<\gamma, d\>_E - \<\phi,d\>_A,\,\,\,
\end{equation}
where $d \in \v$ is any element such that $(\gamma,d,\phi) \in \v_E^*\times_E \v  \times_A \v_A^*$.
We refer to \cite[Chapter~9]{Mckz2} for details. 
\end{example}

The fat groupoids associated to $\V$ and $\V^*$ are isomorphic via an adjoint operation $\mathcal{A}: \F_{\rm inv}(\V) \to \F_{\rm inv}(\V^*)$. Given an invertible element $b: E_{\sour(g)} \to \V_g$, define $b^\top: C^*_{\sour(g)} \to \V^*_g$ as follows: for $v \in \V_g$,
\begin{equation}
 \<b^{\top}(\mu), v\> = \<\mu, \, \Psi_{b^{-1}}(c)\>, \,\,\, \text{ where } c = (v-b(\widetilde{\sour}(v)))\bullet 0_{g^{-1}} \in C_{\tar(g)}.
\end{equation}
Using the definitions of the structure maps $\widetilde{\sour}, \widetilde{\tar}: \V^* \to C^*$, one can prove that
$$
\widetilde{\sour}(b^\top(\mu)) = \mu, \,\,\, \widetilde{\tar}(b^\top(\mu)) =\Psi_{b^{-1}}^*(\mu). 
$$
Hence, $b^\top \in \F_{\rm inv}(\V^*)$ and we define $\mathcal{A}(b) = b^\top$. 

We list here some properties of $\mathcal{A}$, which the reader can check.
\begin{enumerate}
 \item $\<b^\top(\mu),b(e)\> =0$, for all $(e, \mu) \in E \times_M C^*$;
 \item $\Psi_{b^\top} = \Psi_{b^{-1}}^*$ acting on $\partial^*: E^* \to C^*$;
 \item $1_\V^\top = 1_{\V^*}$;
 \item $(b_1 \cdot b_2)^\top = b_1^\top \cdot b_2^\top$;
 \item $(b^\top)^\top = b$.
\end{enumerate}
In particular, $\mathcal{A}$ is a Lie groupoid morphism. We denote by $\mathfrak{a}=\mathrm{Lie}(\mathcal{A}): \F(\v) \to \F(\mathrm{Lie}(\V^*))$ its derivative, where $\mathrm{Lie}(\V^*) \to C^*$ is the $\VB$-algebroid of $\V^*$.

\begin{lemma}\label{lem:adj}
Let $\eta \in \Gamma_{lin}(E,\v)$. One has that 
$$
b_\epsilon = \Fl_{\overrightarrow{\eta}}^\epsilon|_{E_x}: E_x \to \V_{\Fl^\epsilon_{\overrightarrow{\alpha}}(x)}
\Leftrightarrow
b_\epsilon^\top = \Fl_{\overrightarrow{\eta}^\top}^\epsilon|_{C_x^*}: C_x^* \to \V^*_{\Fl^\epsilon_{\overrightarrow{\alpha}}(x)},
$$
where $\overrightarrow{\eta}^\top \in \mathfrak{X}(\V^*)$ is the adjoint linear vector field (see Appendix \ref{app:linear}). In particular, 
$$
\overrightarrow{\mathfrak{a}(\eta)} = \overrightarrow{\eta}^\top
$$
\end{lemma}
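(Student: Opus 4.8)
The plan is to manufacture an explicit candidate for the flow of $\overrightarrow{\eta}^\top$ on $\V^*$ out of the adjoint bisections $b_\epsilon^\top = \mathcal{A}(b_\epsilon)$, and then identify it in two different ways: first as $\Fl^\epsilon_{\overrightarrow{\eta}^\top}$, via the pairing characterization of the adjoint linear vector field, and second as $\Fl^\epsilon_{\overrightarrow{\mathfrak{a}(\eta)}}$, by recognizing it as left-translation by a flow of bisections. The left-hand condition of the biconditional is nothing but \eqref{bis_flow} together with Proposition \ref{lemma:right_linear}, so I take it as the running hypothesis. Throughout I will use two facts. First, the flow of a right-invariant vector field is left-translation by its flow of bisections (as in the proof of Proposition \ref{right_core}), so that
\[
\Fl^\epsilon_{\overrightarrow{\eta}}(v) = b_\epsilon(\widetilde{\tar}(v)) \bullet v, \quad v \in \V .
\]
Second, by definition of the adjoint linear vector field (Appendix \ref{app:linear}), $\Fl^\epsilon_{\overrightarrow{\eta}^\top}$ is the unique linear flow on $\V^* \to \G$ covering $\Fl^\epsilon_{\overrightarrow{\alpha}}$ and satisfying the inverse-dual relation $\<\Fl^\epsilon_{\overrightarrow{\eta}^\top}(\psi), \Fl^\epsilon_{\overrightarrow{\eta}}(v)\> = \<\psi, v\>$ for $\psi \in \V^*_g$, $v \in \V_g$.

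Set $\Theta_\epsilon(\psi) = b_\epsilon^\top(\widetilde{\tar}(\psi)) \bullet \psi$ for $\psi \in \V^*$, where here $\widetilde{\tar}$ is the target of $\V^* \toto C^*$. Since $b_\epsilon^\top = \mathcal{A}(b_\epsilon)$ is a flow of bisections of $\V^* \toto C^*$ that are vector-bundle morphisms $C^* \to \V^*$, left-translation by it is linear over $\Fl^\epsilon_{\overrightarrow{\alpha}}$ (by the interchange law, cf. Remark \ref{Bphi}), so $\Theta_\epsilon$ is a linear flow covering the correct base flow. The key computation is then the pairing property. Using $\widetilde{\sour} \circ b_\epsilon = \mathrm{id}$ the pair $(b_\epsilon(\widetilde{\tar}v), v)$ is composable in $\V$, and likewise $(b_\epsilon^\top(\widetilde{\tar}\psi), \psi)$ is composable in $\V^*$ because $\widetilde{\sour}(b_\epsilon^\top(\mu)) = \mu$; moreover both products lie over the same arrow $\Fl^\epsilon_{\overrightarrow{\alpha}}(\tar g) \cdot g$. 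Hence \eqref{dual_mult} applies and gives
\[
\<\Theta_\epsilon(\psi), \Fl^\epsilon_{\overrightarrow{\eta}}(v)\> = \<b_\epsilon^\top(\widetilde{\tar}\psi), b_\epsilon(\widetilde{\tar}v)\> + \<\psi, v\> .
\]
The first term on the right vanishes by property (1) of $\mathcal{A}$, namely $\<b^\top(\mu), b(e)\> = 0$, applied with $e = \widetilde{\tar}(v)$ and $\mu = \widetilde{\tar}(\psi)$ (both over $\tar g$). Thus $\Theta_\epsilon$ satisfies the defining pairing relation, and by uniqueness $\Theta_\epsilon = \Fl^\epsilon_{\overrightarrow{\eta}^\top}$. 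Restricting to a unit $\mu \in C^*_x$, where $\widetilde{\tar}(\mu) = \mu$ and right-multiplication by the unit $\mu$ is trivial, yields $b_\epsilon^\top = \Fl^\epsilon_{\overrightarrow{\eta}^\top}|_{C^*_x}$, which is the right-hand side of the biconditional; the reverse implication follows by applying the same argument to $\V^*$, using $(b^\top)^\top = b$ (property (5)) and that the adjoint of $\overrightarrow{\eta}^\top$ is $\overrightarrow{\eta}$.

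For the final assertion I identify $\Theta_\epsilon$ the second way. As left-translation by the bisection flow $b_\epsilon^\top$, $\Theta_\epsilon$ is the flow of the right-invariant vector field on $\V^* \toto C^*$ whose restriction to units is $\left.\tfrac{d}{d\epsilon}\right|_0 b_\epsilon^\top = \left.\tfrac{d}{d\epsilon}\right|_0 \mathcal{A}(b_\epsilon)$. Since $\left.\tfrac{d}{d\epsilon}\right|_0 b_\epsilon = \eta$ by \eqref{bis_flow} and $\mathfrak{a} = \mathrm{Lie}(\mathcal{A})$, this derivative equals $\mathfrak{a}(\eta)$, so $\Theta_\epsilon = \Fl^\epsilon_{\overrightarrow{\mathfrak{a}(\eta)}}$. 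Comparing with $\Theta_\epsilon = \Fl^\epsilon_{\overrightarrow{\eta}^\top}$ gives $\overrightarrow{\mathfrak{a}(\eta)} = \overrightarrow{\eta}^\top$.

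I expect the main obstacle to be the pairing computation in the second paragraph, specifically the bookkeeping of base points, composability, and the source/target matching that make \eqref{dual_mult} and property (1) simultaneously applicable with all factors over the intended arrows. The other point that must be pinned down carefully is the convention for the adjoint linear vector field, namely that its flow is the inverse-dual of $\Fl^\epsilon_{\overrightarrow{\eta}}$ characterized by the pairing relation above, since it is precisely this characterization that lets the single identity $\<b^\top(\mu), b(e)\> = 0$ do all the work.
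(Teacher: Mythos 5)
Your proof is correct and rests on the same two pillars as the paper's own argument: the annihilation identity $\<b^\top(\mu), b(e)\>=0$ (property (1) of $\mathcal{A}$), which pins down $b^\top$, and the fact that the adjoint flow preserves the duality pairing, equation \eqref{eq:adj_flow}. The only difference is one of packaging: the paper runs the computation directly at the units, where $\<\mu,e\>=0$ holds automatically because $C^*=\mathrm{Ann}(E)$, whereas you build the global flow $\psi \mapsto b_\epsilon^\top(\widetilde{\tar}(\psi))\bullet\psi$ on $\V^*$ and reduce to the same identity via \eqref{dual_mult} --- slightly longer, but with the merit of making the final identification $\overrightarrow{\mathfrak{a}(\eta)} = \overrightarrow{\eta}^\top$ explicit rather than implicit.
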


\begin{proof}
 First note that property (1) of $\mathcal{A}$ completely determines $b^\top$ (i.e. if $\<\xi, b(e)\>=0$, for every $e \in E_x$, then $\xi = b^\top(\mu)$, where $\mu = \widetilde{\sour}(\xi) \in C^*_x$). Now, using \eqref{eq:adj_flow}, for every $(e, \mu) \in E\times_M C^*$,
 $$
 \<\Fl_{\overrightarrow{\eta}^\top}^\epsilon(\mu), \Fl_{\overrightarrow{\eta}}^\epsilon(e)\> = \<\mu, e\> = 0. 
 $$
 This proves the result.
\end{proof}

We shall now give a detailed description of $\mathfrak{a}$. First of all, recall that there is an isomorphism of double vector bundles,
\begin{equation}\label{dual_identification}
i_\V : \mathrm{Lie}(\V^*) \stackrel{\sim}\longrightarrow \v^*_A,
\end{equation}
defined as follows (see \cite[Thm.~11.5.12]{Mckz2}):
$$
\<i_\V(\left.\frac{d}{d\epsilon}\right|_{\epsilon=0}\psi_\epsilon), \left.\frac{d}{d\epsilon}\right|_{\epsilon=0} v_\epsilon\>_A =  \left.\frac{d}{d\epsilon}\right|_{\epsilon=0} \<\psi_\epsilon, v_\epsilon\>, 
$$
for a path $(\psi_\epsilon, v_\epsilon) \in \V^* \times_\G \V$ on the source fiber starting at the identity section. From now on, we shall always make the identification $\v_A^* =\mathrm{Lie}(\V^*)$ without further notice.

It is well-known (see \cite{Mac-doubles}) that there exists a bijection between the linear sections of a double vector bundle $D$ and its dual $D_A^*$: 
\begin{equation}\label{adjoint_cor}
\begin{array}{rcl}
\Gamma_{lin}(E, D) & \stackrel{\sim}\longrightarrow & \Gamma_{lin}(C^*, D_A^*)\\
\eta & \longmapsto & \eta^\top.
\end{array}
\end{equation}
Let us briefly recall this correspondence. Given a linear section $\eta \in \Gamma_{lin}(E,D)$, the function $\ell_\eta \in C^{\infty}(D_E^*)$
$$
\ell_\eta(\gamma)= \<\gamma, \eta(e)\>_E, \,\,\, \text{ for } 
\begin{minipage}[h]{30pt}
\xy
{\ar@{|->}_{}(0,25)*++{\gamma};(10,25)*++{\mu}}\\
{\ar@{|->}_{}(0,25)*++{};(0,15)*++{e}}\\
{\ar@{|->}_{}(0,15)*++{};(10,15)*++{x}}\\
{\ar@{|->}_{}(10,25)*++{};(10,15)*++{}}\\
\endxy
\end{minipage}
\in D_E^*,
$$
is fiberwise linear not only with respect to the projection $D_E^* \to E$ but also with respect to $D_E^* \to C^*$. So, there exists a section $\eta^\top \in \Gamma_{lin}(C^*, D_A^*)$ such that $\ell_\eta(\varphi) = \dbrac{\varphi, \eta^\top(\mu)}$, for the bracket $\dbrac{\cdot, \cdot}$ \eqref{double_brac}. Now, one can see from the definition of $\dbrac{\cdot,\cdot}$ that $\eta^\top$ is completely determined by the condition
\begin{equation}\label{eq:dual}
 \<\eta^\top(\mu), \eta(e)\>_A =0, \,\,\, \forall \, (e,\mu) \in  E \times_M C^*.
\end{equation}

\begin{proposition}\label{right_adjoint}
 Let $\v=\mathrm{Lie}(\V)$, for a $\VB$-groupoid $\V \toto E$. One has that
 \begin{equation}
  \mathfrak{a}(\eta) = \eta^\top, \,\,\,\eta \in \Gamma_{lin}(E,\v).
  \end{equation}
\end{proposition}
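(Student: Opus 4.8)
The plan is to verify that $\mathfrak{a}(\eta)$ satisfies the orthogonality relation \eqref{eq:dual}, which by the discussion preceding the statement determines $\eta^\top$ uniquely among linear sections of $\v_A^* \to C^*$. Under the identification $\mathrm{Lie}(\V^*) = \v_A^*$ of \eqref{dual_identification}, both $\mathfrak{a}(\eta)$ and $\eta^\top$ are such sections (the former because $\mathcal{A}$ is a Lie groupoid morphism, so $\mathfrak{a}$ is a Lie algebroid morphism $\F(\v) \to \F(\v_A^*)$), so establishing \eqref{eq:dual} for $\mathfrak{a}(\eta)$ forces the equality.

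First I would represent $\eta$ by an explicit path of bisections. By \eqref{bis_flow} in the proof of Proposition \ref{lemma:right_linear}, the family $b_\epsilon = \Fl^\epsilon_{\overrightarrow{\eta}}|_{E_x} \in \F_{\rm inv}(\V)$ differentiates at $\epsilon = 0$ to the value of $\eta$ over $x$; in particular $\eta(e) = \left.\frac{d}{d\epsilon}\right|_{\epsilon=0} b_\epsilon(e)$ for $e \in E_x$. Since $\mathfrak{a} = \mathrm{Lie}(\mathcal{A})$, the section $\mathfrak{a}(\eta)$ is then represented over $x$ by the path $\mathcal{A}(b_\epsilon) = b_\epsilon^\top$, so that $\mathfrak{a}(\eta)(\mu) = \left.\frac{d}{d\epsilon}\right|_{\epsilon=0} b_\epsilon^\top(\mu)$ for $\mu \in C^*_x$.

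Next, fixing $(e,\mu) \in E \times_M C^*$, I observe that both $b_\epsilon(e)$ and $b_\epsilon^\top(\mu)$ are paths lying over the same base arrow $g_\epsilon = \Fl^\epsilon_{\overrightarrow{\alpha}}(x)$ in $\G$, where $\alpha = \pr(\eta)$, and starting at the units $e$ and $\mu$ respectively. Hence the defining formula \eqref{dual_identification} for $i_\V$ applies and yields
\begin{equation*}
\<\mathfrak{a}(\eta)(\mu), \eta(e)\>_A = \left.\frac{d}{d\epsilon}\right|_{\epsilon=0} \<b_\epsilon^\top(\mu), b_\epsilon(e)\>.
\end{equation*}
By property (1) of the adjoint operation $\mathcal{A}$, the pairing $\<b_\epsilon^\top(\mu), b_\epsilon(e)\>$ vanishes identically in $\epsilon$, so its derivative is zero. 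Thus $\mathfrak{a}(\eta)$ satisfies \eqref{eq:dual}, and therefore coincides with $\eta^\top$.

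The step I would be most careful about is the bookkeeping of the two vector bundle structures on $\v$ together with the identification $i_\V$: one must check that the element $\mathfrak{a}(\eta)(\mu)$, computed as $\left.\frac{d}{d\epsilon}\right|_{\epsilon=0} b_\epsilon^\top(\mu)$, is genuinely the covector in $\v_A^*$ pairing with $\eta(e) \in \v$ over $A$ as in \eqref{double_brac}, and that the two differentiated paths indeed sit over the common arrow $g_\epsilon$ so that $\<b_\epsilon^\top(\mu), b_\epsilon(e)\>$ is defined for every $\epsilon$. Once these identifications are pinned down, the whole argument reduces to differentiating the identically-vanishing pairing furnished by property (1), and no further computation is needed. (Alternatively one could invoke Lemma \ref{lem:adj}, which already gives $\overrightarrow{\mathfrak{a}(\eta)} = \overrightarrow{\eta}^\top$, and then match this against the right-invariant vector field of $\eta^\top$; but the direct verification of \eqref{eq:dual} above is the shorter route.)
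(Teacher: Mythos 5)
Your proof is correct, but it takes a genuinely different route from the paper's. The paper first invokes Lemma \ref{lem:adj} to get $\overrightarrow{\mathfrak{a}(\eta)} = \overrightarrow{\eta}^{\top}$ and then reduces the proposition to the vector-field identity $\overrightarrow{\eta^\top} = \overrightarrow{\eta}^{\top}$; this identity is verified by computing the derivative pairing $\brac{\overrightarrow{\eta^\top}(\psi), \overrightarrow{\eta}(v)}$ at \emph{arbitrary} points $(\psi,v) \in \V^*\times_\G \V$, using right-invariance and the multiplicativity \eqref{dual_mult} of the pairing to push everything to the units, where \eqref{eq:dual} makes it vanish, and then concluding via the characterization \eqref{eq:dual_linear} of adjoint linear vector fields. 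You instead stay entirely at the Lie algebroid level: you verify the algebraic condition \eqref{eq:dual} directly for $\mathfrak{a}(\eta)$ by differentiating the identically vanishing pairing $\<b_\epsilon^\top(\mu), b_\epsilon(e)\>$ furnished by property (1) of $\mathcal{A}$, with the identification $i_\V$ of \eqref{dual_identification} converting that derivative into $\<\mathfrak{a}(\eta)(\mu), \eta(e)\>_A$. Both arguments ultimately differentiate a vanishing pairing, but yours bypasses Lemma \ref{lem:adj} and \eqref{eq:dual_linear} entirely, at the cost of leaning harder on the explicit formula for $i_\V$ and on the uniqueness clause in \eqref{eq:dual} (which does require, as you note, checking that $\mathfrak{a}(\eta)$ is a linear section covering the same $\alpha = \pr(\eta)$ --- immediate since $\mathcal{A}$ covers $\mathrm{id}_\G$). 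Your bookkeeping of the two paths $b_\epsilon(e)$ and $b_\epsilon^\top(\mu)$ over the common arrow $g_\epsilon$, starting at the units $e$ and $\mu$, is exactly right, so the application of \eqref{dual_identification} is legitimate; the argument is complete. One small dividend of the paper's route is that it records the reusable statement $\overrightarrow{\eta^\top} = \overrightarrow{\eta}^{\top}$ about right-invariant vector fields on $\V^*$, which is what is actually used later (e.g.\ in the proof of Proposition \ref{prop:inf_comp}); your shorter argument establishes the proposition but would leave that corollary to be extracted afterwards by combining it with Lemma \ref{lem:adj}.
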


\begin{proof}
Using Lemma \eqref{lem:adj}, it suffices to show that $\overrightarrow{\eta^\top}= \overrightarrow{\eta}^\top$. Let $\brac{\cdot, \cdot}: T(\V) \times_{T\G} T(\V^*) \to \R$ be the derivative of the natural bracket $\<\cdot, \cdot\>: \V \times_\G \V^* \to \R$. For any $(\psi, v) \in \V^*\times_\G V$, using the right-invariance of the flows and \eqref{dual_mult}, one has that
\begin{align*}
 \brac{ \overrightarrow{\eta^\top}(\psi), \overrightarrow{\eta}(v)} = & \left.\frac{d}{d\epsilon}\right|_{\epsilon=0} \< \Fl_{\eta^\top}^\epsilon(\widetilde{\tar}(\psi)) \bullet \psi, \Fl_{\eta}^\epsilon(\widetilde{\tar}(v)) \bullet v\> \\
  & = \left.\frac{d}{d\epsilon}\right|_{\epsilon=0} \left(\<\Fl_{\eta}^\epsilon(\widetilde{\tar}(v)), \Fl_{\eta^\top}^\epsilon(\widetilde{\tar}(\psi))\> + \< \psi, v\> \right)\\
  & = \<\eta^\top(\widetilde{\tar}(\psi)), \eta(\widetilde{\tar}(v))\>_A = 0.
\end{align*}
Now, from \eqref{eq:dual_linear}, it follows that $\overrightarrow{\eta^\top}$ must be the adjoint of $\overrightarrow{\eta}$ as we wanted to prove.
\end{proof}

One can now use $\eta^\top, \B \varphi$ to describe the Lie algebroid structure on $\v_A^* \to C^*$, in the case $\v=\mathrm{Lie}(\V)$. Note that the $\F(\v)$-connection on $\partial^*: E^* \to C^*$ is obtained by differentiation of $(\Psi_{b}^{-1})^*$. So, using that $\mathfrak{a}$ is a Lie algebroid morphism and Proposition \ref{prop:fat_rep}, one gets
\begin{align}\label{eq:dual_vb}
\begin{split}
 [\eta_1^\top, \eta_2^\top] & = [\eta_1,\eta_2]^\top, \,\,\, [\eta^\top, \B\varphi] = \B(\nabla_\eta^\top \varphi), \,\,\,[\B\varphi_1, \B \varphi_2]=0\\
 \rho_{\v^*_A}(\eta^\top) & = W_{\nabla_\eta}, \,\, \rho_{\v^*_A}(\B\varphi) = \partial^*(\varphi)^\uparrow
\end{split}
\end{align}
$\nabla$ is the $\F(\v)$-connection on $\partial: C \to E$ given by \eqref{fat_rep1} and $W_{\nabla_\eta} \in \frakx(C^*)$ is the linear vector field corresponding to $\nabla_\eta: \Gamma(C) \to \Gamma(C)$.

\begin{remark}\em
 For a general $\VB$-algebroid $\v$ (not necessarily integrable), the existence of a Lie algebroid structure on $\v_A^* \to C^*$ is the result of the double linearity of the Poisson bracket on $\v_E^* \to C^*$ (see \cite{Mac-doubles}[Eqs.~(19)]). Note that Mackenzie in \cite{Mac-doubles} chooses to work with $(\v_E^*)_{C^*}^*$ whereas we work directly with $\v_A^*$ using \eqref{double_brac}. This explains the difference in the sign between our expression for $\rho_{\v^*_A}(\B \varphi)$ and his.  
\end{remark}

\section{Multiplicative differential forms on Lie groupoids}
In this section, we define our main object of study and state our result regarding its Lie theory.

\subsection{Definition and examples}
\begin{definition}
Let $\V \toto E$ be a $\VB$-groupoid over $\G \toto M$. A differential $k$-form on $\G$ with values in $\V$, $\vartheta \in \Omega^k(\G, \V)$ is said to be multiplicative if the corresponding map
$$
\xy
{\ar@{->}_{}(0,12)*++{\bigoplus^k T\G}; (0,0)*++{\bigoplus^k TM}} \\
{\ar@{->}_{}(2,12)*++{\vphantom{\bigoplus^k T\G}}; (2,0)*++{\vphantom{\bigoplus^k TM}}}\\
{\ar@{->}_{}(5,12)*++{}; (15,12)*++{}}\\
{\ar@{->}_{}(5,0)*++{}; (15,0)*++{}}\\
{\ar@{->}_{}(16,12)*++{\!\!\!\V}; (16,0)*++{\!\!\!E}}\\
{\ar@{->}_{}(14,12)*++{\vphantom{V}}; (14,0)*++{\vphantom{E}}}
\endxy 
$$
is a Lie groupoid morphism. 
\end{definition}

\begin{remark}\em
It is straightforward to check that the base morphism $\oplus^k TM \to E$ is the map associated to a differential form $\theta \in \Omega^k(M,E)$ and 
\begin{equation}
\theta = \vartheta|_M.
\end{equation}
\end{remark}

\begin{example}\em
Let $C \to M$ be a representation of $\G \toto M$. A differential form $\vartheta \in \Omega^k(\G, \tar^*C)$ with values in the semi-direct product $\tar^*C \toto M$ is multiplicative if and only if
\begin{equation}\label{mult_Ezero}
(m^*\vartheta)_{(g_1,g_2)} = pr_1^*\vartheta + g_1 \cdot \, pr_2^*\vartheta,
\end{equation}
where $pr_1, pr_2: \G_{(2)} \to \G$ are the projections on the first and second components, respectively (in Lemma \ref{lemma:rep_vb} below we prove a generalization of this fact). The Lie theory of these multiplicative forms were studied in \cite{CSS}. Note that by considering $C= M \times \R$ with the trivial representation, one recovers the theory of multiplicative differential forms on $\G$ \cite{BC}.
\end{example}

There is a lift operation which takes multiplicative forms $\vartheta \in \Omega^k(\G, \V)$ with values in $\V$ to multiplicative forms on the fat groupoid $\F_{\rm inv}(\V)$ with values in the fat representation $C \to M$. Let us be more precise. Given $b: E_{\sour(g)} \to \V_g$, an element of $\F_{\rm inv}(\V)$, and $\xi_1, \dots, \xi_k \in T_{(g,b)}\F_{\rm inv}(\V)$, define $\widehat{\vartheta} \in \Omega^k(\F_{\rm inv}(\V), \tar^*C)$ by
\begin{equation}\label{lift_form}
\widehat{\vartheta}(\xi_1, \dots, \xi_k) \bullet 0_g = \vartheta(T\pr(\xi_1), \dots, T\pr(\xi_k)) - b(\theta(T\sour(\xi_1), \dots, T\sour(\xi_k))
\end{equation}
Let us show that $\widehat{\vartheta}$ is multiplicative. We shall restrict ourselves to the case $k=1$ for simplicity. The general case follows similarly. Given a composable pair of vectors $(\xi_1, \xi_2)$, where $\xi_i \in T_{(g_i, b_i)}\F_{\rm inv}(\V)$, $i=1,2$, one has that
\begin{align*}
\widehat{\vartheta}(\xi_1 \bullet \xi_2)\bullet 0_{g_1g_2} = \vartheta(U_1 \bullet U_2) - b_1(\widetilde{\tar}(b_2(\theta(X_2))))\bullet b_2(\theta(X_2)),
\end{align*}
where $T\pr(\xi_i)=U_i \in T_{g_i}\G$ and $T\sour(U_i)=X_i \in T_{\sour(g_i)}M$, $i=1,2$. Now, using the multiplicativity of $\vartheta$ and the interchange law, it follows that
\begin{align*}
\widehat{\vartheta}(\xi_1 \bullet \xi_2)\bullet 0_{g_1g_2} & = (\vartheta(U_1) - b_1(\widetilde{\tar}(b_2(\theta(X_2)))))\bullet (\vartheta(U_2)-b_2(\theta(X_2)))\\
 &  = (\vartheta(U_1) - b_1(\theta(X_1))) \bullet 0_{g_2}  + b_1(\partial(\widehat{\vartheta}(\xi_2)))\bullet \widehat{\vartheta}(\xi_2)\bullet 0_{g_1^{-1}}0_{g_1g_2}\\
 & = (\widehat{\vartheta}(\xi_1)  +  b_1\cdot \widehat{\vartheta}(\xi_2)) \bullet 0_{g_1g_2}. 
\end{align*}
We have used in the second equality that $\partial(\widehat{\vartheta}(\xi_2)) = \theta(X_1) - \widetilde{\tar}(b_2(\theta(X_2)))$.
\begin{example}\em
 Let $\V = T\G \toto TM$ be the tangent groupoid. Differential forms $\vartheta \in \Omega^k(\G, T\G)$ with values in $T\G$ are also known as vector valued forms. The Lie theory of multiplicative vector valued forms were studied in \cite{Bur-Drum} (see also \cite{Bur-Drum2}). The lift operation $\vartheta \mapsto \widehat{\vartheta}$ takes multiplicative vector valued forms to multiplicative forms on the jet groupoid $J^1\G$ with values in the adjoint representation $\tar^*A$. The lift of $\mathrm{id} \in \Omega^1(\G, T\G)$ is exactly the Cartan 1-form on the jet groupoid $J^1\G$.
\end{example}

\subsection{Infinitesimal multiplicative forms on Lie algebroids}
Let $(A,\rho, [\cdot, \cdot])$ be a Lie algebroid and $\v \to E$ be a $\VB$-algebroid over $A$

\begin{definition}
 An IM $k$-form on $A$ with values on $\v$ is a triple $(D,l,\theta)$ where $D: \Gamma_{lin}(E,\v) \to \Omega^k(M, C)$, $l: A \to \wedge^{k-1} T^*M \otimes C$ and $\theta \in \Omega^k(M, E)$ satisfying
\begin{equation}\label{eq:compatibility}
D(\mathcal{B}\Phi) = -\Phi \circ \theta, \,\,\,\, D(f\eta) = fD(\eta) + df \wedge l(\pr(\eta))
\end{equation}
and the set of equations known as {\bf IM-equations}
\begin{align}
\tag{IM1} D([\eta_1, \eta_2]) & = L_{\nabla_{\eta_1}} D(\eta_2) - L_{\nabla_{\eta_2}} D(\eta_1)\\
\tag{IM2} l([\pr(\eta), \beta]) & = L_{\nabla_{\eta}} l(\beta) - i_{\rho(\beta)} D(\eta)\\
\tag{IM3} i_{\rho(\alpha)}l(\beta) & = - i_{\rho(\beta)} l(\alpha)\\
\tag{IM4} L_{\nabla_\eta} \theta & =  \partial(D(\eta)) \text{ (redundancies?) } \\
\tag{IM5} i_{\rho(\alpha)} \theta & = \partial(l(\alpha)), 
\end{align}
where $\nabla$ is the fat representation of $\F(\v)$ on $\partial:C \to E$ and $L_{\nabla_\eta}$ is the operator \eqref{der_Lie}.
\end{definition}
% \comment{IM4 comes from consider the Lie bracket $[(T{\pr(\eta)}, \dots, T{\pr(\eta)}, \eta^\top), (0, \dots, 0, \mathcal{B}\epsilon)] = (0,\dots, \nabla^\top_{\eta} \epsilon)$. Using the cocycle equation for a Lie algebroid morphism
% 	$$
% 	f:\oplus_k TA \oplus \v^* \to \R
% 	$$
% 	we obtain the following identity in $\Omega^k(M)$:
% 	$$
% 	\langle \theta, \nabla^\top_{\eta} \epsilon \rangle = \mathcal{L}_{\rho(\pr(a))} \langle \theta, \epsilon \rangle - \langle D(\eta), \partial^*(\epsilon)\rangle. 
% 	$$
% 	The final form of IM4 is by manipulation of the previous equality, using duals. But now I'm not getting go from the previous equation to IM4.}

Let us give some examples.
\begin{example}\em[Spencer operators]\label{ex:spencer}
Let $\v = A \times_M C \to M$ be a $\VB$-algebroid with $E=0$. In this case, the projection $\pr: \Gamma_{lin}(E,\v) \ni \eta \mapsto \alpha \in \Gamma(A)$ gives an isomorphism of Lie algebroids $\F(\v) \cong \Gamma(A)$. The fat representation gives a flat $A$-connection $\nabla: \Gamma(A) \times \Gamma(C) \to \Gamma(C)$.

An IM $k$-form with values in $A \times_M C$ is a pair $(D,l)$ such that $D: \Gamma(A) \to \Omega^k(M,C)$, $l: A \to \wedge^{k-1} T^*M \otimes C$ satisfying the compatibility $D(f\alpha) = fD(\alpha) + df \wedge l(\alpha)$ and the equations 
\begin{align*}
 D([\alpha_1, \alpha_2]) & = L_{\nabla_{\alpha_1}} D(\alpha_2) - L_{\nabla_{\alpha_2}} D(\alpha_1)\\
 l([\alpha_1, \alpha_2]) & = L_{\nabla_{\alpha_1}}l(\alpha_2) - i_{\rho(\alpha_2)} D(\alpha_1)\\
 i_{\rho(\alpha_1)}l(\alpha_2) & = - i_{\rho(\alpha_2)}l(\alpha_1)
\end{align*}
In \cite{CSS}, such pairs are called \textit{$C$-valued Spencer operators on $A$.}
\end{example}

\begin{remark}\em
 Note that (IM1), (IM2), (IM3) in the definition of an IM $k$-form on $A$ with values in a general $\VB$-algebroid $\v$ can be interpreted as saying that $(D,l\circ \pr)$ is a $C$-valued Spencer operator on the fat Lie algebroid $\F(\v)$. 
\end{remark}

\begin{example}\em \label{ex:vector_val}
An \textit{IM $(1,k)$-form} on $A$ is a triple $(\mathfrak{D}, \mathfrak{l}, \mathfrak{r})$, where $\mathfrak{D}: \Gamma(A) \to \Omega^k(M, A)$, $\mathfrak{l}: A \to \wedge^{k-1} T^*M \otimes A$, $\mathfrak{r}: T^*M \to \wedge^k T^*M$
satisfy the Leibniz condition
$$
\mathfrak{D}(f\,\alpha) = f\, \mathfrak{D}(\alpha) + df \wedge \mathfrak{l}(\alpha) - \alpha \wedge \mathfrak{r}(df)
$$
and \textit{the IM-equations}:

\begin{minipage}[t]{5cm}
\begin{align*}
\mathfrak{D}([\alpha, \beta]) & = \alpha \cdot \mathfrak{D}(\beta) - \beta \cdot \,\mathfrak{D}(\alpha);\\
\mathfrak{l}([\alpha,\beta]) & = \alpha \cdot \mathfrak{l}(\beta) - i_{\rho(\beta)} \mathfrak{D}(\alpha);\\
\mathfrak{r}(\Lie_{\rho(\alpha)} \omega) & = \alpha \cdot \mathfrak{r}(\omega) - \<\omega, \rho(\mathfrak{D}(\alpha))\>; \\
\end{align*}
\end{minipage}
\begin{minipage}[t]{5cm}
\begin{align*}
i_{\rho(\alpha)} \mathfrak{l}(\beta) & = - i_{\rho(\beta)} \mathfrak{l}(\alpha);\\
i_{\rho(\alpha)} \mathfrak{r}(\omega) & =  \langle \omega, \rho(\mathfrak{l}(\alpha) \rangle.
\end{align*}
\end{minipage}

Here, $\,\cdot\,$ is the action of the Lie algebra $\Gamma(A)$ on $\Omega^\bullet(M, A)$ given by
$
\alpha \cdot (\omega \otimes \beta) = \Lie_{\rho(\alpha)}\omega \otimes \beta + \omega \otimes [\alpha, \beta].
$
These objects were introduced in \cite{Bur-Drum} as the infinitesimal data associated to multiplicative vector valued forms on Lie groupoids. 

There is a 1-1 correspondence between IM $(1,k)$-forms on $A$ and IM $k$-forms on $A$ with values in $TA$. Indeed, for a triple  $(\mathfrak{D}, \mathfrak{l}, \mathfrak{r})$, define

% an IM $(1,k)$-form on $A$. It is straightforward to check, using 
% that the operator $L_{\nabla_{j^1\alpha}}$ on $\Omega^k(M,A)$ associated to the fat representation \eqref{jet_representation} is exactly the action $\alpha \, \cdot \, $, that $D: \Gamma(J^1A) \to \Omega^k(M,A)$, $l: A \to \wedge^{k-1}T^*M \otimes A$ and $\theta \in \Omega^k(M,TM)$ given as
$$
l = \mathfrak{l}, \,\, \theta =  \mathfrak{r}^*, \,\, D(\eta) = \mathfrak{D}(\alpha) + D^{\rm clas}(\eta) \circ \theta, 
$$
where $\alpha = \pr(\eta)$ and $D^{\rm clas}$ is the classical Spencer operator. One can check that $(\mathfrak{D}, \mathfrak{l}, \mathfrak{r})$ is an IM $(1,k)$-form on $A$ if and only if $(D,l, \theta)$ is an IM $k$-form on $A$ with values on $TA$. Note that $\mathfrak{D}(\alpha) = D(j^1\alpha)$.
\end{example}

\subsection{Infinitesimal-global correspondence}
We are now able to state our result regarding the correspondence between multiplicative forms on Lie groupoids and IM forms on Lie algebroids.

\begin{theorem}\label{thm:main}
For source 1-connected Lie groupoid $\G \toto M$, there is a 1-1 correspondence between multiplicative $\vartheta \in \Omega^k(\G, \V)$ and IM $k$-forms $(D, l, \theta)$ on $A$ with values on $\v=\mathrm{Lie}(\V)$. The correspondence is given by
\begin{equation}\label{eq:inf_comp}
D(\eta)  = L_{\Delta_\eta}(\vartheta)|_M, \,\,\, l(\alpha)  = i_{\overrightarrow{\alpha}} \vartheta|_M, \,\,\, \theta  = \vartheta|_M,
\end{equation}
where $\Delta_{\eta}: \Gamma(\V) \to \Gamma(\V)$ is the (adjoint) derivation corresponding to the linear vector field $\overrightarrow{\eta} \in \frakx(\V)$ and $L_{\Delta_\eta}$ is the operator \eqref{der_Lie}.
\end{theorem}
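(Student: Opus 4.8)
The plan is to reduce the multiplicativity of $\vartheta$ to an additive cocycle condition for a single scalar function on an auxiliary groupoid, following the strategy of \cite{BC, Bur-Drum}. I would work with the big groupoid
\[
\bG = T\G \oplus \dots \oplus T\G \oplus \V^* \toto TM \oplus \dots \oplus TM \oplus C^*,
\]
the Whitney sum over $\G$ of $k$ copies of the tangent groupoid $T\G \toto TM$ with the dual $\VB$-groupoid $\V^* \toto C^*$, and encode $\vartheta$ as the fibrewise-multilinear function $f_\vartheta \in C^\infty(\bG)$ given on a point $(v_1, \dots, v_k, \psi)$ over $g \in \G$ by $f_\vartheta(v_1, \dots, v_k, \psi) = \langle \psi, \vartheta(v_1, \dots, v_k)\rangle$. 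The first step is to check that $\vartheta$ is a $\VB$-groupoid morphism if and only if $f_\vartheta$ is a \emph{multiplicative function}, i.e.\ $f_\vartheta(pq) = f_\vartheta(p) + f_\vartheta(q)$ for every composable pair $p,q \in \bG$; this follows directly from the formula \eqref{dual_mult} for the multiplication on $\V^*$ together with the interchange law, exactly as in the classical encoding of scalar multiplicative forms. One also checks $f_\vartheta|_{\text{units}} = 0$, using that $C^* = \mathrm{Ann}(E)$ and that $\vartheta|_M = \theta$ takes values in $E$.

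The second step exploits the Lie theory of multiplicative functions. A multiplicative function is precisely a groupoid morphism $\bG \to (\R,+)$, so for source $1$-connected $\G$ (hence source $1$-connected $\bG$) Lie's second theorem identifies these bijectively with Lie algebroid morphisms $\mathrm{Lie}(\bG) \to \R$, that is, with closed sections $\xi_f \in \Gamma(\mathrm{Lie}(\bG)^*)$, the correspondence being $\xi_f(\sigma) = \big(\overrightarrow{\sigma}(f)\big)|_{\text{units}}$ for $\sigma \in \Gamma(\mathrm{Lie}(\bG))$, subject to
\[
\xi_f([\sigma_1,\sigma_2]) = \rho(\sigma_1)\big(\xi_f(\sigma_2)\big) - \rho(\sigma_2)\big(\xi_f(\sigma_1)\big),
\]
where $\rho$ denotes the anchor of $\mathrm{Lie}(\bG)$. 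Source $1$-connectedness guarantees that every such cocycle integrates to a unique multiplicative function, which provides both well-definedness and bijectivity once the infinitesimal side has been identified with the triple $(D,l,\theta)$. The theorem thus reduces to evaluating $\xi_{f_\vartheta}$ on a generating set of sections of $\mathrm{Lie}(\bG)$ and matching the outcome with the IM data.

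For this matching I would use the decomposition of $\Gamma(\mathrm{Lie}(\bG))$ into linear and core sections coming from the $\VB$-algebroid structures of $TA = \mathrm{Lie}(T\G)$ and $\v_A^* = \mathrm{Lie}(\V^*)$. Evaluating $\xi_{f_\vartheta}$ on the linear sections $\eta^\top$ of $\v_A^*$ (Lemma \ref{lem:adj}, Proposition \ref{right_adjoint}) recovers $D(\eta) = L_{\Delta_\eta}(\vartheta)|_M$; evaluating on the core section of a $T\G$-factor coming from $\alpha \in \Gamma(A)$ recovers, via the contraction $i_{\overrightarrow{\alpha}}$, the $(k-1)$-form $l(\alpha) = i_{\overrightarrow{\alpha}}\vartheta|_M$; and evaluating on the core sections of $\V^*$ coming from $E^*$ recovers $\theta = \vartheta|_M$, in agreement with \eqref{eq:inf_comp}. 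The $C^\infty(M)$-Leibniz behaviour of $\xi_{f_\vartheta}$ yields the compatibility equations \eqref{eq:compatibility} (in particular $D(\B\Phi) = -\Phi\circ\theta$ from the core sections $\B\Phi$), while the single cocycle equation, specialized to pairs of generators and simplified using the bracket relations \eqref{VB1}--\eqref{VB3}, \eqref{eq:dual_vb} and the fat representation $\nabla$ of Proposition \ref{prop:fat_rep}, yields (IM1)--(IM5): pairs of linear sections give (IM1); a linear and a core section give (IM2) and (IM4); and pairs of core sections give (IM3) and (IM5), with the core anchor $\partial$ appearing precisely where an $E^*$-core section of $\V^*$ is involved.

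The main obstacle I anticipate lies in this last step: one must compute the brackets of the right-invariant vector fields on $\bG$ attached to the mixed generators (tangent lifts in the $T\G$-factors and adjoint linear sections $\eta^\top$ in the $\V^*$-factor), carefully track the signs produced by the dualization \eqref{double_brac} and by passing to adjoint derivations, and verify that the multilinear structure collapses so that the core insertions genuinely land in $\Omega^k(M,C)$, $\wedge^{k-1}T^*M\otimes C$ and $\Omega^k(M,E)$. Establishing the equivalence of multiplicativities for general $k$ (rather than the illustrative $k=1$ case) and confirming that no cocycle relations beyond (IM1)--(IM5) survive are the remaining points demanding care.
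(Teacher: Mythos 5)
Your proposal follows essentially the same route as the paper: encode $\vartheta$ as the componentwise-linear function $\c_\vartheta$ on $\bG=\times_\G^k T\G\times_\G\V^*$, characterize multiplicativity as the differentiable cocycle condition, differentiate to a Lie algebroid cocycle on $\bA=\times_A^k TA\times_A\v_A^*$, evaluate on the same generators ($\chi_\eta$, the core sections $\B\beta_{(j)}$ of the $T\G$-factors, and $\B\varphi$ from $E^*$) to extract $(D,l,\theta)$, and match the cocycle equation on pairs of generators to (IM1)--(IM5) exactly as the paper does, with source 1-connectedness handling integration. The only point you flag but do not resolve --- that the integrated multiplicative function is again componentwise linear and skew-symmetric, hence of the form $\c_\vartheta$ for an honest $\vartheta\in\Omega^k(\G,\V)$ --- is handled in the paper by citing \cite[Prop.~A.3]{Bur-Drum}.
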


We shall postpone the proof of Theorem \ref{thm:main} to \S \ref{proof1} and will focus here on how it recovers some particular results when one restricts the $\VB$-groupoid to special cases. It is specially important to obtain explicit expressions for the derivation $\Delta_\eta$ on these cases. 

\begin{corollary}
 Let $\G \toto M$ be a source 1-connected groupoid with a representation on $C \to M$. There is a 1-1 correspondence between differential forms $\vartheta \in \Omega^k(\G, \tar^*C)$ satisfying equation \eqref{mult_Ezero} and $C$-valued Spencer operators $(D, l)$ on $A$. The correspondence is given by
 \begin{align*}
 D(\alpha)(X_1,\dots,X_k) & = \left.\frac{d}{d\epsilon}\right|_{\epsilon=0} (\Fl_{\overrightarrow{\alpha}}^\epsilon(x))^{-1}\cdot \vartheta(T\Fl^\epsilon_{\overrightarrow{\alpha}}(X_1), \dots, T\Fl^\epsilon_{\overrightarrow{\alpha}}(X_k))\\
 l(\alpha) & = i_{\overrightarrow{\alpha}} \vartheta|_M.
 \end{align*}
\end{corollary}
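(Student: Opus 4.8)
The plan is to deduce the statement directly from Theorem \ref{thm:main} applied to the semidirect product $\V = \tar^*C \toto M$, for which $E = 0$. First I would record the relevant identifications for this $\V$: its Lie algebroid is the $\VB$-algebroid $\v = A \times_M C \to M$ with $E = 0$ from Example \ref{ex:spencer}, the fat representation $\nabla$ is the flat $A$-connection induced by the given representation of $\G$ on $C$, and the projection $\pr: \Gamma_{lin}(E,\v) \to \Gamma(A)$ is an isomorphism, so a linear section $\eta$ is canonically the same datum as $\alpha = \pr(\eta) \in \Gamma(A)$. Multiplicativity of $\vartheta \in \Omega^k(\G, \tar^*C)$ is exactly equation \eqref{mult_Ezero}, while for an IM $k$-form $(D, l, \theta)$ on $A$ with values in $\v$ one has $\theta = \vartheta|_M \in \Omega^k(M, E) = 0$, so the compatibility \eqref{eq:compatibility} together with (IM1)--(IM5) collapse precisely to the defining relations of a $C$-valued Spencer operator $(D, l)$. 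Thus Theorem \ref{thm:main} already provides the bijection, and what remains is to make the formulas for $D$ and $l$ explicit.

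The expression $l(\alpha) = i_{\overrightarrow{\alpha}}\vartheta|_M$ is literally the one from Theorem \ref{thm:main}, so only the formula for $D$ requires work. The key computation is the flow of the right-invariant linear vector field $\overrightarrow{\eta} \in \frakx(\V)$ attached to $\eta \leftrightarrow \alpha$. Since $E = 0$, the fat-groupoid example gives $\F_{\rm inv}(\tar^*C) \cong \G$, so the flow of bisections $b_\epsilon$ appearing in \eqref{bis_flow} is just the bisection flow $\phi_\epsilon$ of $\overrightarrow{\alpha}$ on $\G$, regarded as the zero-valued section of $\V$ supported over $\phi_\epsilon$. Using $\Fl^\epsilon_{\overrightarrow{\eta}}(v) = b_\epsilon(\widetilde{\tar}(v)) \bullet v$ together with the explicit multiplication $(g_1, c_1)\bullet(g_2,c_2) = (g_1g_2,\, c_1 + g_1\cdot c_2)$ of the semidirect product, I would obtain
$$
\Fl^\epsilon_{\overrightarrow{\eta}}(g, c) = \big(\Fl^\epsilon_{\overrightarrow{\alpha}}(g),\; \phi_\epsilon(\tar(g)) \cdot c\big),
$$
which exhibits $\Fl^\epsilon_{\overrightarrow{\eta}}$ as a vector bundle automorphism of $\tar^*C$ covering $\Fl^\epsilon_{\overrightarrow{\alpha}}$ whose fiberwise action is the representation.

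Finally I would substitute this into the Lie derivative formula \eqref{der_Lie} defining $L_{\Delta_\eta}$ and restrict to a unit $x \in M$. There $\Fl^\epsilon_{\overrightarrow{\alpha}}(x) = \phi_\epsilon(x)$, the covered base map is $\Fl^\epsilon_{\overrightarrow{\alpha}}$, and the fiber action is $c \mapsto \Fl^\epsilon_{\overrightarrow{\alpha}}(x)\cdot c$; since $\Delta_\eta$ is the \emph{adjoint} derivation, the pullback uses the inverse bundle map, which is exactly what produces the factor $(\Fl^\epsilon_{\overrightarrow{\alpha}}(x))^{-1}\cdot(-)$. Differentiating at $\epsilon = 0$ and using $T\Fl^\epsilon_{\overrightarrow{\alpha}}(X_i)$ for the arguments then yields precisely the claimed formula for $D(\alpha)$. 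The main obstacle is the flow computation and, more subtly, keeping careful track of the adjoint (inverse) convention in $L_{\Delta_\eta}$: this is what turns the representation action into its inverse in the final expression, while everything else is a routine specialization of Theorem \ref{thm:main}.
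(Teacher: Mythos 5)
Your proposal is correct and follows essentially the same route as the paper: specialize Theorem \ref{thm:main} to the semidirect product $\tar^*C$ with $E=0$ (so that IM forms collapse to Spencer operators, as in Example \ref{ex:spencer}), compute the flow $\Fl^\epsilon_{\overrightarrow{\eta}}(g,c) = (\Fl^\epsilon_{\overrightarrow{\alpha}}(g), \Fl^\epsilon_{\overrightarrow{\alpha}}(\tar(g))\cdot c)$, and feed it into Proposition \ref{prop:der_eq}, with the inverse factor $(\Fl^\epsilon_{\overrightarrow{\alpha}}(x))^{-1}$ arising exactly as you say from the time-$(-\epsilon)$ flow in the adjoint-derivation formula. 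The only difference is that you derive the flow formula from $\Fl^\epsilon_{\overrightarrow{\eta}}(v)=b_\epsilon(\widetilde{\tar}(v))\bullet v$ and the semidirect-product multiplication, whereas the paper simply states it.
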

\begin{proof}
Consider the semi-direct product $\V = \tar^*C \toto M$. One has that $A \times_M C = \mathrm{Lie}(\tar^*C)$ (see Example \ref{ex:spencer}). In this case, the flow of the linear vector field $\overrightarrow{\eta}$ corresponding to a linear section $\eta \cong \alpha \in \Gamma(A)$ is given by:
$$
\Fl_{\overrightarrow{\eta}}^\epsilon(g,c) = (\Fl_{\overrightarrow{\alpha}}^\epsilon(g), \Fl_{\overrightarrow{\alpha}}^\epsilon(\tar(g))\cdot c).
$$
Hence, for $X_1, \dots, X_k \in T_xM \subset T_x\G$, one has that
\begin{align*}
 (L_{\Delta_\eta}\vartheta)(X_1,\dots, X_k) & = \left.\frac{d}{d\epsilon}\right|_{\epsilon=0} \Fl_{\overrightarrow{\eta}}^{-\epsilon}(\vartheta(T\Fl^\epsilon_{\overrightarrow{\alpha}}(X_1), \dots, T\Fl^\epsilon_{\overrightarrow{\alpha}}(X_k)))\\
 & = \left.\frac{d}{d\epsilon}\right|_{\epsilon=0}\Fl_{\overrightarrow{\alpha}}^{-\epsilon}(\Fl_{\rho(\alpha)}^\epsilon(x)) \cdot \vartheta(T\Fl^\epsilon_{\overrightarrow{\alpha}}(X_1), \dots, T\Fl^\epsilon_{\overrightarrow{\alpha}}(X_k))\\
 & = \left.\frac{d}{d\epsilon}\right|_{\epsilon=0} (\Fl_{\overrightarrow{\alpha}}^\epsilon(x))^{-1}\cdot \vartheta(T\Fl^\epsilon_{\overrightarrow{\alpha}}(X_1), \dots, T\Fl^\epsilon_{\overrightarrow{\alpha}}(X_k)).
\end{align*}
Note that we are identifying the vertical subspaces of both $T_{(x,c)}(\tar^*C)$ and $T_c C$ with $C_x$. 
\end{proof}

\begin{example}\em
 The IM $1$-form on $A$ with values in $TA$ corresponding to $\mathrm{id}_\G \in \Omega^1(\G, T\G)$ is exactly $(D^{\rm clas}, \mathrm{id}_A, \mathrm{id}_{TM})$. This follows directly from \eqref{right_inv_der}. This correspondence should be seen as an alternative way to express the fact established in \cite{CSS} that $(D^{\rm clas}, \pr)$ is the Spencer operator on $J^1A$ with values in $A$ associated to the Cartan 1-form on the jet groupoid.
\end{example}

\begin{corollary}\label{cor:vec_val}
On a source 1-connected groupoid $\G \toto M$, there is a 1-1 correspondence between multiplicative vector valued forms $\vartheta \in \Omega^k(\G, T\G)$ and IM $(1,k)$-forms $(\mathfrak{D}, \mathfrak{l}, \mathfrak{r})$ given by
$$
\mathfrak{D}(\alpha) = [\overrightarrow{\alpha}, \vartheta]|_M, \,\,\mathfrak{l} = i_{\overrightarrow{\alpha}}\vartheta|_M, \,\,\, \mathfrak{r}(\mu) = \langle\tar^*\mu, \vartheta\rangle|_M,
$$
$[\cdot, \cdot]$ is the Fr\"olicher-Nijenhuis bracket on $\Omega^k(\G, T\G)$.
\end{corollary}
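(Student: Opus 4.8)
The plan is to deduce the statement by composing two bijections already established in the text: the specialization of Theorem~\ref{thm:main} to the tangent groupoid, and the dictionary of Example~\ref{ex:vector_val} relating IM $(1,k)$-forms to IM $k$-forms with values in $TA$. Concretely, I would first take $\V = T\G$, so that $\v = \mathrm{Lie}(T\G) = TA$, and invoke Theorem~\ref{thm:main} to obtain a $1$-$1$ correspondence between multiplicative $\vartheta \in \Omega^k(\G, T\G)$ and IM $k$-forms $(D,l,\theta)$ on $A$ with values in $TA$, with $D,l,\theta$ given by~\eqref{eq:inf_comp}. Composing with the correspondence $(\mathfrak{D},\mathfrak{l},\mathfrak{r}) \leftrightarrow (D,l,\theta)$ of Example~\ref{ex:vector_val} immediately produces a bijection, and the only remaining task is to rewrite the three resulting operators in the intrinsic form stated in the corollary.

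The identity $\mathfrak{l} = l = i_{\overrightarrow{\alpha}}\vartheta|_M$ is immediate, since Example~\ref{ex:vector_val} sets $\mathfrak{l}=l$ while~\eqref{eq:inf_comp} computes $l$. For $\mathfrak{r}$, I would use that Example~\ref{ex:vector_val} prescribes $\theta = \mathfrak{r}^*$, so that $\mathfrak{r}$ is recovered from $\theta = \vartheta|_M$ by dualizing, namely $\langle \mathfrak{r}(\mu), X_1\wedge\cdots\wedge X_k\rangle = \langle \mu, \theta(X_1,\dots,X_k)\rangle$ for $\mu \in T^*M$ and $X_i \in T_xM$. It then remains to observe that, on units, $\vartheta(X_1,\dots,X_k)\in T_xM$ (it is a unit of $T\G$ by multiplicativity) and that $T_x\tar$ restricts to the identity there, whence $\langle \tar^*\mu, \vartheta\rangle|_M = \langle \mu, \theta(\cdot)\rangle = \mathfrak{r}(\mu)$, which is precisely the claimed formula.

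The substantive point is the formula for $\mathfrak{D}$. Here I would start from $\mathfrak{D}(\alpha) = D(j^1\alpha)$ (the last line of Example~\ref{ex:vector_val}) together with~\eqref{eq:inf_comp}, giving $\mathfrak{D}(\alpha) = L_{\Delta_{j^1\alpha}}(\vartheta)|_M$. The task is then to identify the abstract Lie derivative $L_{\Delta_{j^1\alpha}}$ on $\Omega^\bullet(\G, T\G)$ with the Fr\"olicher-Nijenhuis bracket $[\overrightarrow{\alpha}, \cdot]$. By~\eqref{jet_deriv}, the (adjoint) derivation $\Delta_{j^1\alpha}$ appearing in Theorem~\ref{thm:main} acts on $\Gamma(T\G)$ exactly as $[\overrightarrow{\alpha}, \cdot]$, i.e.\ it is induced by the tangent lift of $\overrightarrow{\alpha}$ (as recorded in the derivation of~\eqref{right_inv_der}). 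I would then invoke the standard compatibility stating that the Lie-derivative operator associated to the derivation $[\overrightarrow{\alpha}, \cdot]$ on vector-valued forms coincides with the Fr\"olicher-Nijenhuis bracket with $\overrightarrow{\alpha}$, being careful about the sign produced by passing to adjoints (cf.\ the sign remark following~\eqref{right_inv_der}). Restricting to $M$ then yields $\mathfrak{D}(\alpha) = [\overrightarrow{\alpha}, \vartheta]|_M$.

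The main obstacle, as indicated, is the clean identification of $L_{\Delta_{j^1\alpha}}$ with the Fr\"olicher-Nijenhuis bracket, including the correct normalization and sign; once the tangent-lift description of $\Delta_{j^1\alpha}$ from~\eqref{jet_deriv} is in place this reduces to a known compatibility between the tangent-lift Lie derivative and the Fr\"olicher-Nijenhuis calculus, and the rest of the corollary follows formally by transporting the explicit expressions through the two bijections.
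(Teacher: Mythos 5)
Your proposal is correct and follows essentially the same route as the paper: the authors likewise obtain the bijection by composing Theorem \ref{thm:main} (for $\V = T\G$) with Example \ref{ex:vector_val}, and then reduce the only nontrivial point to identifying $L_{\Delta_{j^1\alpha}}$ on $\Omega^\bullet(\G,T\G)$ with $[\overrightarrow{\alpha},\cdot\,]$ via \eqref{jet_deriv}. Your additional checks of the $\mathfrak{l}$ and $\mathfrak{r}$ formulas are the details the paper leaves implicit, and they are carried out correctly.
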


\begin{proof}
The 1-1 correspondence follows directly from Theorem \ref{thm:main} and Example \ref{ex:vector_val}. One only has to prove the formula for $\mathfrak{D}$. Now, if $D: \Gamma(J^1A) \to \Omega^k(M, A)$ is the first component of the IM $k$-form on $A$ with values in $TA$ corresponding to $\vartheta$, then
$$
\mathfrak{D}(\alpha) = D(j^1 \alpha) = L_{\Delta_{\overrightarrow{j^1\alpha}}} \vartheta|_M.
$$
Using that $\Delta_{\overrightarrow{j^1\alpha}} = \Lie_{\overrightarrow{\alpha}}$(see \eqref{jet_deriv}, one can check that the operator $L_{\Delta_{\overrightarrow{j^1\alpha}}}$ on $\Omega^\bullet(\G, T\G)$ is exactly $[\overrightarrow{\alpha}, \cdot]$.
\end{proof}

% \begin{remark}\em
%  Under the correspondence of Corollary \ref{cor:vec_val}, the identity $\mathrm{id}_\G \in \Omega^1(\G, T\G)$ corresponds to $(0,\mathrm{id}_A, \mathrm{id}_{TM})$ 
% \end{remark}

\subsection{Coefficients in a representation up to homotopy}
We here show how multiplicative forms with values in semi-direct products of groupoids with ruth give rise naturally to a notion of multiplicative forms with values in ruth.

\begin{definition}
A multiplicative $k$-form on $\G$ with values in a ruth $\mathcal{E}=C[1]\oplus E$ is a pair $\omega \in \Omega^k(\G, \tar^*C)$, $\theta \in \Omega^k(M, E)$ satisfying
\begin{align}
\partial\circ\omega_g  & =  \tar^*\theta - \Psi_g \circ \sour^*\theta \label{mul_forms1}\\ 
\m^*\omega_{(g_1,g_2)} & = pr_1^* \omega + \Psi_{g_1} \circ pr_2^*\omega - \Omega_{g_1,g_2}\circ s^* \theta, \label{mul_forms2}
\end{align}
where $s: \G_{(2)} \to M$ is the map $s(g_1,g_2)=\sour(g_2)$ and $(\partial, \Psi, \Omega)$ are the structure operators of the ruth.
\end{definition}

Let us give some examples.

\begin{example}\em
Any $\lambda \in \Omega^k(M,C)$ defines a multiplicative form on $\G$ with values in $\E$ as follows:
$$
\theta = \partial \circ \lambda, \,\,\, \omega_g = \tar^*\lambda - \Psi_g  \circ \sour^*\lambda, \,\,g \in \G.
$$
\end{example}

\begin{example}\em
Let $\Psi$ be a representation of $\G$ on the complex $\partial: C \to E$. It can be considered as a ruth on $\mathcal{E}=C[1]\oplus E$ with $\Omega=0$. In this case, a multiplicative $k$-form with values in $\mathcal{E}$ is a pair $\omega \in \Omega^k(\G, \tar^*C)$, $\theta \in \Omega^k(M, E)$ satisfying:
\begin{align}\label{eq:omega_zero}
\begin{split}
\partial\circ\omega_g & = \tar^*\theta - \Psi_g \circ \sour^*\theta\\ 
\m^*\omega_{(g_1,g_2)} & = pr_1^* \omega + \Psi_{g_1} \circ pr_2^*\omega.
\end{split}
\end{align}
Note that the case $E=0$ recovers the multiplicative differential forms with values in representations studied in \cite{CSS}. Also, when $E=0$ and $C=M \times \mathbb{R}$ with the trivial representation, one recovers the multiplicative forms studied in \cite{AC, BC}.
\end{example}

Regarding the last example, we shall point out that equations \eqref{eq:omega_zero} appeared in \cite{Wald} in the context of higher gauge theory, where they were used to define forms on Lie groupoids with values in Lie 2-algebras.

\begin{example}\em
Consider the differentiable cohomologies (see \cite{AC} and references therein) $H^p(\G_{(\bullet)})$ and $H^p(\Omega^k(\G_{(\bullet)})$ associated to $\G$ . An element $\psi \in H^2(\G_{(\bullet)})$ defines a ruth on the complex $0:C\to E$, where $C=E=M\times \mathbb{R}$, the quasi-action is the trivial representation and 
$$
\Omega_{g_1,g_2} = \psi(g_1,g_2) \in \mathbb{R} \cong \Hom(E_{\sour(g_2)}, C_{\tar(g_1)}).
$$
The cup product with $\psi$ defines a map $\ast_{\psi}: H^0(\Omega^k(\G_{(\bullet)}) \to H^2(\Omega^k(\G_{(\bullet)})$ given by
$$
(\ast_\psi \, \theta)_{(g_1,g_2)} = \psi(g_1,g_2) \,s^*\theta.
$$
We claim that multiplicative k-forms with values in the ruth determined by $\psi$ correspond to elements of $\ker(\ast_{\psi})$. Indeed, $\theta \in \Omega^k(M)$ is a cocycle if and only if it satisfies $\tar^*\theta - \sour^*\theta =0$ and the fact that $[\ast_\psi\,\theta]=0$ in cohomology is equivalent to the existence of $\omega \in \Omega^k(\G)$ such that $\pr_1^*\omega - m^*\omega|_{(g_1,g_2)} + \pr_2^*\omega = \psi(g_1,g_2) \,s^*\theta$.
\end{example}

\begin{lemma}\label{lemma:rep_vb}
 Let $\mathcal{E}= C[1]\oplus E$ be a ruth of $\G$ and consider the associated $\VB$-groupoid $\V = \tar^*C  \oplus \sour^*E \toto E$. There is a 1-1 correspondence between multiplicative $k$-forms $\theta \in \Omega^k(M, E), \omega \in \Omega^k(\G, \tar^*C)$ with values in $\mathcal{E}$ and multiplicative $k$-forms $\vartheta \in \Omega^k(\G, \tar^*C  \oplus \sour^*E)$ with values in $\V$. The correspondence is given by
 $$
 \vartheta(U_1, \dots, U_k) = (\omega(U_1, \dots, U_k), \theta(T\sour(U_1), \dots, T\sour(U_k))).
 $$
\end{lemma}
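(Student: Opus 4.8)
The plan is to unwind the meaning of multiplicativity into three separate compatibility conditions and match them one by one against \eqref{mul_forms1} and \eqref{mul_forms2}. Recall that $\vartheta \in \Omega^k(\G,\V)$ is multiplicative precisely when the induced bundle map $\bigoplus^k T\G \to \V$ is a morphism of Lie groupoids over the base map $\bigoplus^k TM \to E$; by the Remark following the definition of multiplicativity, this base map is the one associated to $\theta := \vartheta|_M$. Since $T\G \toto TM$ carries source $T\sour$, target $T\tar$ and multiplication $T\m$, being a groupoid morphism amounts to the three equations
\begin{align*}
\widetilde{\sour}(\vartheta_g(U_\bullet)) & = \theta(T\sour(U_1),\dots,T\sour(U_k)),\\
\widetilde{\tar}(\vartheta_g(U_\bullet)) & = \theta(T\tar(U_1),\dots,T\tar(U_k)),\\
\vartheta_{g_1g_2}(T\m(U_\bullet,V_\bullet)) & = \vartheta_{g_1}(U_\bullet) \bullet \vartheta_{g_2}(V_\bullet),
\end{align*}
the last one for every pair $(U_\bullet,V_\bullet)$ composable in $\bigoplus^k T\G$, i.e. with $T\sour(U_i)=T\tar(V_i)$.

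First I would treat the source condition. Writing a general element of $\V_g = C_{\tar(g)} \oplus E_{\sour(g)}$ and using $\widetilde{\sour}(g,c,e)=e$ from \eqref{Str}, the source equation says exactly that the $E$-component of $\vartheta_g$ is $(\sour^*\theta)_g$. This has two consequences: it shows that an arbitrary multiplicative $\vartheta$ is forced to have the shape displayed in the statement, with $\omega$ its $C$-component and $\theta=\vartheta|_M$; and it pins down the bijection, reducing both directions of the correspondence to the same computation. Thus it suffices to analyse, for $\vartheta$ of the stated form, when the target and multiplication conditions hold.

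Next I would compute the target condition. Using $\widetilde{\tar}(g,c,e)=\Psi_g(e)+\partial(c)$ from \eqref{Str} together with the shape of $\vartheta$, the target equation becomes $\Psi_g((\sour^*\theta)_g) + \partial(\omega_g) = (\tar^*\theta)_g$, which upon rearranging is precisely \eqref{mul_forms1}. A small but essential point here is that, once \eqref{mul_forms1} holds, composability in $\V$ is automatic: for a composable pair in $\bigoplus^k T\G$ one has $T\sour(U_i)=T\tar(V_i)$ and $\sour(g_1)=\tar(g_2)$, so $\widetilde{\sour}(\vartheta_{g_1}(U_\bullet)) = \theta(T\sour(U_\bullet)) = \theta(T\tar(V_\bullet)) = \widetilde{\tar}(\vartheta_{g_2}(V_\bullet))$, the last equality being the target condition just verified. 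Hence $\vartheta_{g_1}(U_\bullet)$ and $\vartheta_{g_2}(V_\bullet)$ are $\bullet$-composable, and the product formula applies.

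Finally I would expand the multiplication condition with the product formula \eqref{Str}. On the left, $\vartheta_{g_1g_2}(T\m(U_\bullet,V_\bullet))$ has $C$-component $(\m^*\omega)_{(g_1,g_2)}$ and $E$-component $(s^*\theta)_{(g_1,g_2)}$, since the source of $g_1g_2$ equals $\sour(g_2)$. On the right, the $\bullet$-product of $(g_1,\omega_{g_1},(\sour^*\theta)_{g_1})$ and $(g_2,\omega_{g_2},(\sour^*\theta)_{g_2})$ has $E$-component $(\sour^*\theta)_{g_2}=(s^*\theta)_{(g_1,g_2)}$, so the two $E$-components agree automatically, while equating the two $C$-components gives exactly
$$
\m^*\omega = pr_1^*\omega + \Psi_{g_1}\circ pr_2^*\omega - \Omega_{g_1,g_2}\circ s^*\theta,
$$
which is \eqref{mul_forms2}. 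Assembling the three steps yields the equivalence, and hence the asserted bijection. I expect no serious obstacle: the argument is a bookkeeping exercise, and the only points requiring care are the fiber identification $\V_g=C_{\tar(g)}\oplus E_{\sour(g)}$, the tracking of base points under the pullbacks $\sour^*,\tar^*,pr_1^*,pr_2^*,s^*,\m^*$, and the observation that the structure maps of $\V$ act on the values of $\vartheta$ slotwise, so the computation for general $k$ is verbatim the same as for $k=1$.
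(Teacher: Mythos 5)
Your proposal is correct and follows essentially the same route as the paper, which simply observes that, via the formulas \eqref{Str}, equation \eqref{mul_forms1} is equivalent to compatibility of $\vartheta$ with the target map and \eqref{mul_forms2} to compatibility with the multiplication. You have merely written out the bookkeeping the paper leaves implicit, including the useful observations that the source condition forces the displayed shape of $\vartheta$ and that \eqref{mul_forms1} guarantees $\bullet$-composability of the images.
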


\begin{proof}
 It is a straightforward consequence of the formulas \eqref{Str}. More precisely, equation \eqref{mul_forms1} is equivalent to the compatibility of $\vartheta$ with the target map and \eqref{mul_forms2} is equivalent to the compatibility of $\vartheta$ with the multiplication.
\end{proof}

Let us now recall briefly how any $\VB$-groupoid can be presented as a semi-direct product. Let $\V \toto E$ be a $\VB$-groupoid over $\G \toto M$. A horizontal lift is a splitting $h: \sour^*E \to \V$ of the short exact  sequence \eqref{core_ses} such that $h|_M: E \hookrightarrow \V$ is the unit. Equivalently, $h$ can be seen as a section of the projection $\F(\V) \to \G$ preserving source, target and unit maps. In general, $h: \G \to \F(\V)$ will not preserve multiplication. For a composable pair $(g_1, g_2) \in \G_{(2)}$, there is a curvature term $\Omega_{(g_1,g_2)}: E_{\sour(g_2)} \to C_{\tar(g_1)}$ such that
$$
h(g_1 g_2) - h(g_1) \cdot h(g_2) = \Omega_{(g_1,g_2)} \bullet 0_{g_1g_2}
$$

A horizontal lift also induces a quasi-action $\Psi$ of $\G$ on the core complex $\partial: C \to E$: 
$$
\begin{CD}
C_{\sour(g)} @>  \Psi: \,c \,\mapsto\, h(g,\partial(c)) \bullet c \bullet 0_{g^{-1}} >> C_{\tar(g)}\\
@V \partial VV              @VV \partial V \\
E_{\sour(g)}  @>> \Psi: \,e \, \mapsto\, \widetilde{\tar}(h(g,e)) > E_{\tar(g)}.
 \end{CD}
$$
The quasi-action $\Psi$, the curvature $\Omega$ and the core-anchor $\partial: C \to E$ define a ruth of $\G$ on the $\E=C[1]\oplus E$. Moreover, the map
$$
\tar^*C \oplus \sour^*E \to \V, \,\,\,  (g, c, e) \mapsto c\bullet 0_g + h(g,e)
$$
is an isomorphism of $\VB$-groupoids.
% \begin{align}
% & \psi_{g_1g_2} - \psi_{g_1} \circ \psi_{g_2}  = (\Omega_{(g_1,g_2)} \circ \partial, \partial \circ \Omega_{(g_1,g_2)})\\ 
% & \psi_{g_1} \circ \Omega_{(g_2,g_3)} - \Omega_{(g_1g_2,g_3)} + \Omega_{(g_1,g_2g_3)} + \Omega_{(g_1,g_2)} \circ \psi_{g_3}  = 0,
% \end{align}
%for any composable pair $(g_1, g_2) \in \G_{(2)}$. 
This construction establishes an equivalence of categories between 2-term ruth and $\VB$-groupoids \cite{Hoyo-Ort,Gra-Met1}.

% \begin{example}\em
%  A horizontal lift $h:\sour^*TM \to T\G$ for the tangent groupoid is a Ehresmann connection for the submersion $\sour: \G \to M$ satisfying the additional property that $h|_M: TM \hookrightarrow T\G$ is the unit. The corresponding isomorphism of $\VB$-groupoids $T\G \to \tar^*A \oplus \sour^*TM$ is given by
%  $$
%  U \mapsto (\omega_{MC}(U), T\sour(U)), \text{ where } \omega_{MC}(U) \bullet 0_g = U - h(T\sour(U)), \,\,\, U \in T_g\G.
%  $$
%  The resulting ruth of $\G$ on $A[1] \oplus TM$ is called the adjoint representation of $\G$. Using the isomorphism $T\G \to \tar^*A \oplus \sour^*TM$, Lemma \ref{lemma:rep_vb} gives a correspondence between multiplicative forms with values in the adjoint representation and multiplicative vector valued forms $\vartheta \in \Omega^k(\G, T\G)$. Note that $\vartheta = \mathrm{id}_{T\G}: T\G \to T\G$ corresponds exactly to $(\omega_{MC}, \mathrm{id}_{TM})$. 
%  
%  
% \end{example}

% \comment{
% \begin{itemize}
%  \item Definition of ruth for Lie algebroids
%  \item Horizontal lift
%  \item Differentiation of ruth \cite{Bra-Cab-Ort}
%  \item Morphisms and VB-algebroid morphisms \cite{Dru-Jotz-Ort}.
% \end{itemize}
% }

Let us move to the infinitesimal picture. Let $\mathcal{E}=C[1]\oplus E$ be a ruth of a Lie algebroid $A \to M$ and denote by $\nabla$ the $A$-connection on the 2-term complex $\partial: C \to E$ and $K \in \Omega^2(A, \Hom(E,C))$ the curvature term.

\begin{definition}\label{def:IMforms}
	An IM $k$-form on $A$ with values in the representation $\mathcal{E}$ is a triple $(\mathbb{D}, l, \theta)$, where
	 $\mathbb{D}: \Gamma(A) \longrightarrow \Omega^k(M,C)$,
	 $l: A \longrightarrow \wedge^{k-1}T^*M \otimes C$,
	 $\theta \in \Omega^k(M, E)$	
	are operators satisfying the Leibniz rule 
	\begin{equation}\label{Leibniz_ruth}
	\mathbb{D}(f\alpha) = f\mathbb{D}(\alpha) + df \wedge l(\alpha),
	\end{equation}
	(IM3) and (IM5) together with the additional IM equations
	\begin{align}
	\mathbb{D}([\alpha,\beta]) & =  L_{\nabla_\alpha} \mathbb{D}(\beta) - L_{\nabla_\beta} \mathbb{D}(\alpha)
	-  K_{\alpha,\beta} \circ \theta\label{Eq:IM1}\\
	l([\alpha,\beta])  & =  L_{\nabla_\alpha} l(\beta) - i_{\rho(\beta)} \mathbb{D}(\alpha)\label{Eq:IM2}\\
	L_{\nabla_\alpha}\theta & = \partial(\mathbb{D}(\alpha))\label{Eq:IM3}
	\end{align}
where $\alpha,\beta \in \Gamma(A)$.
\end{definition}

There is a close relationship between IM forms with values in ruth and IM forms with values in $\VB$-algebroids as we now explain. For a $\VB$-algebroid $\v \to E$, a splitting $\sigma: A \to \F(\v)$ of \eqref{linear_ses} gives rise to a ruth of $A$ on the graded vector bundle $C[1] \oplus E$ (see \cite{Gra-Met2, Arias-Crai2}). The complex $C \to E$ is the core complex $\partial: C \to E$, the $A$-connection on $E$ and the curvature $K \in \Omega^2(A, {\rm Hom}(E,C))$ are determined by the corresponding core sections of $\v$
(see Remark \ref{Bphi}):
\begin{align*}
\B(\nabla_{\alpha} c) & = [\sigma(\alpha), \B c], \,\,\, \nabla_{\alpha}|_E = \Delta_{\rho_{\v}(\sigma(\alpha))}^\top,\\
\B(K(\alpha_1, \alpha_2)) & = \sigma([\alpha_1,\alpha_2]) - [\sigma(\alpha_1), \sigma(\alpha_2)].
\end{align*}

\begin{lemma}\label{lemma:IMruth}
Let $\sigma: A \to \F(\v)$ be a splitting of \eqref{linear_ses} and consider the corresponding ruth on $\mathcal{E}=C[1]\oplus E$. One has that $(D,l,\theta)$ is an IM $k$-form on $A$ with values in $\v$ if and only if $(D\circ \sigma, l, \theta)$ is an IM $k$-form on $A$ with values in $\mathcal{E}$.
\end{lemma}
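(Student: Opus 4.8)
The plan is to use the splitting $\sigma$ to decompose the fat algebroid as $\F(\v)\cong A\oplus\Hom(E,C)$, so that every linear section is written uniquely as $\eta=\sigma(\alpha)-\B\Phi$ with $\alpha=\pr(\eta)\in\Gamma(A)$ and $\Phi\in\Gamma(\Hom(E,C))$, in complete analogy with the Spencer decomposition \eqref{Spencer_decomp}. The first step is to observe that the compatibility equation $D(\B\Phi)=-\Phi\circ\theta$ in \eqref{eq:compatibility} makes $D$ completely determined, on all of $\Gamma_{lin}(E,\v)$, by its restriction $\mathbb{D}:=D\circ\sigma$ to $\sigma(A)$ together with $\theta$, via
\[
D(\sigma(\alpha)-\B\Phi)=\mathbb{D}(\alpha)+\Phi\circ\theta .
\]
Conversely, any $\mathbb{D}$ determines a $D$ through this formula, and a direct check shows that the $C^\infty(M)$-Leibniz rule for $D$ in \eqref{eq:compatibility} is equivalent, under this bijection of data, to the Leibniz rule \eqref{Leibniz_ruth} for $\mathbb{D}$ (both reduce to $\sigma(f\alpha)=f\sigma(\alpha)$ and $\B(f\Phi)=f\B\Phi$). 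This reduces the statement to matching the IM-equations.

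The second step records the dictionary between the fat representation and the ruth data defining $\mathcal{E}$: by construction one has $\nabla_{\sigma(\alpha)}=\nabla_\alpha$ on $\partial\colon C\to E$, and $\sigma([\alpha,\beta])-[\sigma(\alpha),\sigma(\beta)]=\B(K(\alpha,\beta))$. Since (IM3) and (IM5) involve only $l$ and $\theta$, they coincide with their ruth counterparts verbatim. The forward direction is then immediate by restriction to $\sigma(A)$: substituting $\eta=\sigma(\alpha)$ into (IM2) and (IM4) yields \eqref{Eq:IM2} and \eqref{Eq:IM3}, while putting $\eta_i=\sigma(\alpha_i)$ in (IM1) and using
\[
D([\sigma(\alpha),\sigma(\beta)])=\mathbb{D}([\alpha,\beta])-D(\B K(\alpha,\beta))=\mathbb{D}([\alpha,\beta])+K(\alpha,\beta)\circ\theta
\]
produces exactly the curvature term of \eqref{Eq:IM1}.

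The substance is the converse: recovering (IM1)--(IM5) for all linear sections from the ruth equations. For each IM-equation I would form the defect $\mathcal{R}(\eta)=\mathrm{LHS}(\eta)-\mathrm{RHS}(\eta)$ (and $\mathcal{R}(\eta_1,\eta_2)$ for (IM1)) and show it vanishes on the two families $\sigma(\alpha)$, $\B\Phi$ that span each fiber of $\F(\v)$. On $\sigma(A)$ the defect is zero by the ruth equations. On a core section $\B\Phi$ one computes directly, using $\pr(\B\Phi)=0$, the compatibility $D(\B\Phi)=-\Phi\circ\theta$, and the action of the fat representation on core sections, which from \eqref{fat_rep1} and \eqref{VB2} works out to $\nabla_{\B\Phi}c=-\Phi(\partial c)$ on $\Gamma(C)$ and $\nabla_{\B\Phi}|_E=-\partial\circ\Phi$. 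For example the $\B\Phi$-defect of (IM2) reads $-L_{\nabla_{\B\Phi}}l(\beta)+i_{\rho(\beta)}D(\B\Phi)=\Phi(\partial(l(\beta)))-\Phi(i_{\rho(\beta)}\theta)$, which vanishes by (IM5); the $\B\Phi$-defects of (IM4) and of (IM1) collapse similarly. Crucially, these defects are $C^\infty(M)$-linear in their linear-section arguments — a tensoriality that for (IM1) and (IM2) itself relies on \eqref{eq:compatibility} together with the already-verified lower equations — so that vanishing on the spanning families forces $\mathcal{R}\equiv 0$; this imposes the order (IM3),(IM5) then (IM2) then (IM4) then (IM1) in which one should establish them.

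I expect the main obstacle to be the converse for (IM1). Here one must expand the bracket $[\sigma(\alpha_1)-\B\Phi_1,\sigma(\alpha_2)-\B\Phi_2]$ using \eqref{VB1}--\eqref{VB3}, identify the mixed terms $[\sigma(\alpha),\B\Phi]=\B(\nabla_\alpha\Phi)$ and $[\B\Phi_1,\B\Phi_2]\in\B(\Hom(E,C))$, push them through $D$ by means of the data formula and $D(\B\Phi)=-\Phi\circ\theta$, and check that all $\Phi$-dependent contributions match those produced by $L_{\nabla_{\eta_1}}D(\eta_2)-L_{\nabla_{\eta_2}}D(\eta_1)$, consistently with the curvature term already present in \eqref{Eq:IM1}. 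This is a bookkeeping of signs rather than a conceptual difficulty, but it is the place where the definition of $K$, the fat representation on core sections, and equation (IM5) must all conspire.
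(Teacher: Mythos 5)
Your proposal is correct and follows the same route as the paper's (very terse) proof: the paper likewise reduces everything to the identities $\nabla_{\sigma(\alpha)}=\nabla_\alpha$, $\sigma([\alpha,\beta])-[\sigma(\alpha),\sigma(\beta)]=\B(K(\alpha,\beta))$ and the compatibility $D(\B\Phi)=-\Phi\circ\theta$, merely asserting the equivalence of the IM equations where you spell out the check on the generators $\sigma(\alpha)$ and $\B\Phi$. Your explicit bijection $D(\sigma(\alpha)-\B\Phi)=\mathbb{D}(\alpha)+\Phi\circ\theta$ and the verification of the $\B\Phi$-defects (using $\nabla_{\B\Phi}c=-\Phi(\partial c)$, $\nabla_{\B\Phi}|_E=-\partial\circ\Phi$ and (IM5)) are exactly the details the paper leaves implicit.
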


\begin{proof}
 The equivalence between equations \eqref{Eq:IM2}, \eqref{Eq:IM3} and (IM2), (IM4), respectively is an immediate consequence of the relationship between the $A$-connection and $\F(\v)$-connection on $\partial: C \to E$. The equivalence between \eqref{Eq:IM1} and (IM1) also uses the definition of $K$ and properties \eqref{eq:compatibility} of $D$. 
\end{proof}

 Given a ruth $(\partial, \Psi, \Omega)$ of $\G \toto M$ on $\E=C[1]\oplus E$, consider the semi-direct product $\V= \tar^*C \oplus \sour^*E \toto E$ and the corresponding $\VB$-algebroid $\v \to E$. We can apply the Lie functor to the natural horizontal lift $h: \G \to \F(\V)$ (since it preserves the identity and source maps) to obtain a splitting $\sigma: A \to \F(\v)$. Define $\mathrm{Lie}(\partial, \Psi, \Omega)$ as the corrresponding ruth $(\partial, \nabla, K)$ of $A$ on $\mathcal{E}=C[1]\oplus E$. We refer to \cite[Lemma~4.5]{Bra-Cab-Ort} for explicit formulas for $\nabla$ and $K$.   

\begin{theorem}
Let $\G \toto M$ be a source 1-connected Lie groupoid and $\mathcal{E}=C[1]\oplus E$ be a ruth of $G$ with structure operators $(\partial, \Psi, \Omega)$. There is a 1-1 correspondence between multiplicative forms $(\theta, \omega)$ with values in $\E$ and IM $k$-forms $(\mathbb{D},l, \theta)$ on $A$ with values in the ruth $\mathrm{Lie}(\partial, \Psi, \Omega)$. The correspondence is given by
\begin{align*}
\mathbb{D}(\alpha)(X_1,\dots, X_k) & = \left.\frac{d}{d\epsilon}\right|_{\epsilon=0} \Psi_{(\Fl^\epsilon_{\overrightarrow{\alpha}}(x))^{-1}} (\omega(T\Fl^\epsilon_{\overrightarrow{\alpha}}(X_1),\dots, \Fl^\epsilon_{\overrightarrow{\alpha}}(X_k)))\\
l(\alpha) & = i_{\overrightarrow{\alpha}}\omega|_M
\end{align*}
\end{theorem}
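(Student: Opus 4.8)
The plan is to assemble the correspondence by composing three results already in hand: Lemma \ref{lemma:rep_vb}, Theorem \ref{thm:main}, and Lemma \ref{lemma:IMruth}. First I would pass from a multiplicative pair $(\theta,\omega)$ with values in $\E$ to the multiplicative form $\vartheta \in \Omega^k(\G,\V)$ on the semidirect product $\V = \tar^*C \oplus \sour^*E$ via Lemma \ref{lemma:rep_vb}, using the explicit formula $\vartheta(U_1,\dots,U_k) = (\omega(U_1,\dots,U_k), \theta(T\sour(U_1),\dots,T\sour(U_k)))$. Since $\G$ is source $1$-connected, Theorem \ref{thm:main} identifies $\vartheta$ with an IM $k$-form $(D,l,\theta)$ on $A$ with values in $\v = \mathrm{Lie}(\V)$, through $D(\eta)=L_{\Delta_\eta}(\vartheta)|_M$, $l(\alpha)=i_{\overrightarrow{\alpha}}\vartheta|_M$ and $\theta=\vartheta|_M$. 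Finally, applying the Lie functor to the natural horizontal lift $h\colon\G\to\F(\V)$ produces the splitting $\sigma=\mathrm{Lie}(h)\colon A\to\F(\v)$, whose associated ruth is by definition $\mathrm{Lie}(\partial,\Psi,\Omega)$; Lemma \ref{lemma:IMruth} then sends $(D,l,\theta)$ to the IM $k$-form $(\mathbb{D},l,\theta):=(D\circ\sigma,\,l,\,\theta)$ with values in this ruth. Composing the three bijections gives the asserted correspondence, with the component $\theta$ visibly preserved along the way.

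Reconciling the abstract output with the stated formulas requires two computations. The one for $l$ is immediate: for $\alpha\in\Gamma(A)$ and $X_2,\dots,X_k\in T_xM$, the vector $\overrightarrow{\alpha}(x)=\alpha(x)$ lies in $A_x=\ker(T_x\sour)$, so the $E$-component $\theta(T\sour(\alpha(x)),X_2,\dots)$ vanishes and $i_{\overrightarrow{\alpha}}\vartheta|_M=i_{\overrightarrow{\alpha}}\omega|_M$, exactly as claimed.

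For $\mathbb{D}=D\circ\sigma$ I would expand $\mathbb{D}(\alpha)=L_{\Delta_{\sigma(\alpha)}}(\vartheta)|_M$ using the flow of the linear vector field $\overrightarrow{\sigma(\alpha)}\in\frakx(\V)$. Writing $g_\epsilon=\Fl^\epsilon_{\overrightarrow{\alpha}}(x)$ and evaluating on $X_1,\dots,X_k\in T_xM$, the Lie derivative reads $\left.\frac{d}{d\epsilon}\right|_{\epsilon=0}\Fl^{-\epsilon}_{\overrightarrow{\sigma(\alpha)}}\big(\vartheta|_{g_\epsilon}(T\Fl^\epsilon_{\overrightarrow{\alpha}}X_1,\dots,T\Fl^\epsilon_{\overrightarrow{\alpha}}X_k)\big)$. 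Because $\sour$ is constant along the right-invariant flow, the $E$-component of the argument equals the $\epsilon$-independent vector $e=\theta_x(X_1,\dots,X_k)$, so in the decomposition $\V=\tar^*C\oplus\sour^*E$ it splits as $w_\epsilon\bullet 0_{g_\epsilon}+h(g_\epsilon,e)$ with $w_\epsilon=\omega|_{g_\epsilon}(\dots)\in C_{\tar(g_\epsilon)}$. Using linearity of the flow together with the pull-back identity $F\circ\Fl^\epsilon_{\overrightarrow{\sigma(\alpha)}}=\Psi_{(g_\epsilon,b_\epsilon)}\circ F$ from the proof of Proposition \ref{prop:fat_rep}, the core part of $\Fl^{-\epsilon}_{\overrightarrow{\sigma(\alpha)}}(w_\epsilon\bullet 0_{g_\epsilon})$ is $\Psi_{(g_\epsilon,b_\epsilon)}^{-1}(w_\epsilon)$, whereas $\Fl^{-\epsilon}_{\overrightarrow{\sigma(\alpha)}}(h(g_\epsilon,e))=(b_{-\epsilon}\cdot h(g_\epsilon))(e)$ differs from the unit $e\in E_x$ by a term of order $\epsilon^2$ and hence contributes nothing to the core derivative at $\epsilon=0$.

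The main obstacle, and the only delicate point, is that the bisections $b_\epsilon$ integrating $\overrightarrow{\sigma(\alpha)}$ are not literally the horizontal lifts $h(g_\epsilon)$, and $\Psi$ is not a genuine action, so neither $\Psi_{(g_\epsilon,b_\epsilon)}=\Psi_{g_\epsilon}$ nor $\Psi_{g_\epsilon}^{-1}=\Psi_{(g_\epsilon)^{-1}}$ holds for $\epsilon\neq0$. I would resolve this by matching $1$-jets at $\epsilon=0$: since $\sigma=\mathrm{Lie}(h)$, the paths $b_\epsilon$ and $h(g_\epsilon)$ share the velocity $\sigma(\alpha)$, and the paths $\epsilon\mapsto(g_\epsilon,h(g_\epsilon))^{-1}$ and $\epsilon\mapsto((g_\epsilon)^{-1},h((g_\epsilon)^{-1}))$ in $\F_{\rm inv}(\V)$ both have velocity $-\sigma(\alpha)$ at $\epsilon=0$, using that the differential of groupoid inversion is $-\mathrm{id}$ on the Lie algebroid. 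As $\Psi$ is a representation of $\F_{\rm inv}(\V)$ and $\Psi_{(g,b)}(c)$ depends smoothly on the splitting $b$, the value of $\left.\frac{d}{d\epsilon}\right|_{\epsilon=0}\Psi_{(g_\epsilon,b_\epsilon)}^{-1}(w_\epsilon)$ is therefore unaffected by replacing $b_\epsilon$ with $h(g_\epsilon)$ and then $\Psi_{g_\epsilon}^{-1}$ with $\Psi_{(g_\epsilon)^{-1}}$. This turns the core derivative into $\left.\frac{d}{d\epsilon}\right|_{\epsilon=0}\Psi_{(\Fl^\epsilon_{\overrightarrow{\alpha}}(x))^{-1}}(\omega(T\Fl^\epsilon_{\overrightarrow{\alpha}}X_1,\dots))$, matching the statement and completing the verification.
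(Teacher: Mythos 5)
Your proposal is correct and follows the same architecture as the paper's proof: both compose the three bijections from Lemma \ref{lemma:rep_vb}, Theorem \ref{thm:main} and Lemma \ref{lemma:IMruth}, observe that $\theta$ is carried along unchanged, and reduce everything to verifying the explicit formulas for $l$ and $\mathbb{D}$; the computation of $l$ is identical. The only divergence is in how the formula for $\mathbb{D}(\alpha)=L_{\Delta_{\overrightarrow{\sigma(\alpha)}}}\vartheta|_M$ is evaluated. The paper simply quotes from Brahic--Cabrera--Ortiz the closed formula
$\Fl^\epsilon_{\overrightarrow{\sigma(\alpha)}}(g,c,e) = (\Fl^\epsilon_{\overrightarrow{\alpha}}(g),\, \Psi_{\Fl^\epsilon_{\overrightarrow{\alpha}}(\tar(g))}(c) - \Omega_{(\Fl^\epsilon_{\overrightarrow{\alpha}}(\tar(g)), g)}(e),\, e)$,
after which the desired expression drops out once one notes that $\epsilon \mapsto \Omega_{(g_\epsilon^{-1}, g_\epsilon)}$ vanishes to order $\geq 2$ at $\epsilon=0$. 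You instead reconstruct exactly the first-order information you need from ingredients internal to the paper: the pull-back identity $F\circ \Fl^\epsilon_{\overrightarrow{\eta}} = \Psi_{(g_\epsilon,b_\epsilon)}\circ F$ from the proof of Proposition \ref{prop:fat_rep} handles the $\tar^*C$-summand, and a jet-matching argument (the integrating bisections $b_\epsilon$ and the horizontal lifts $h(g_\epsilon)$ share the velocity $\sigma(\alpha)$, and likewise for their inverses) justifies both the replacement of $\Psi_{(g_\epsilon,b_\epsilon)}^{-1}$ by $\Psi_{(g_\epsilon)^{-1}}$ and the vanishing of the contribution of the $\sour^*E$-summand --- the latter being your counterpart of the paper's second-order vanishing of the $\Omega$-term. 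Both routes are valid; yours is self-contained and makes visible exactly which jets matter, while the paper's is shorter at the cost of an external citation for the flow.
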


\begin{proof}
 Let $\V = \tar^*C \oplus \sour^*E \toto E$ be the $\VB$-groupoid corresponding to the ruth on $\E$ and consider its $\VB$-algebroid $\v=\mathrm{Lie}(\V)$. There is a chain of 1-1 correspondences coming from Lemmas \ref{lemma:rep_vb}, \ref{lemma:IMruth} and Theorem \ref{thm:main} 
 $$
 (\omega,\theta) \text{\, mult.} \leftrightarrow \vartheta = (\omega, \sour^*\theta) \in \Omega_{mult}^k(\G, \V) \leftrightarrow  (D,l,\theta) \leftrightarrow (\mathbb{D}=D\circ \sigma, l, \theta),
 $$
 where $\sigma: A \to \F(\v)$ is the natural splitting of $\v$. One only has to check the explicit formulas for the correspondence. Since $l(\alpha) = i_{\overrightarrow{\alpha}}\vartheta|_M = (i_{\overrightarrow{\alpha}}\omega|_M, 0)$, the formula for $l$ holds. Also, 
 $
 \mathbb{D}(\alpha) = L_{\Delta_{\overrightarrow{\sigma(\alpha)}}} \vartheta|_M.
 $
 Using  Proposition \eqref{prop:der_eq} and the fact that (see \cite{Bra-Cab-Ort})  
 $$
 \Fl^\epsilon_{\overrightarrow{\sigma(\alpha)}}(g,c,e) = (\Fl^\epsilon_{\overrightarrow{\alpha}}(g), \Psi_{\Fl^\epsilon_{\overrightarrow{\alpha}}(\tar(g))}(c) - \Omega_{(\Fl^\epsilon_{\overrightarrow{\alpha}}(\tar(g)), g)}(e), e) 
 $$
 one has that
 \begin{align*}
  (L_{\Delta_{\overrightarrow{\sigma(\alpha)}}} \vartheta)(X_1,\dots,X_k) &= \left.\frac{d}{d\epsilon}\right|_{\epsilon=0} \Fl_{\overrightarrow{\sigma(\alpha)}}^{-\epsilon}(\vartheta(T\Fl^\epsilon_{\overrightarrow{\alpha}}(X_1), \dots, T\Fl_{\overrightarrow{\alpha}}^\epsilon(X_k)))\\
  &  = \left.\frac{d}{d\epsilon}\right|_{\epsilon=0} [\Psi_{\Fl_{\overrightarrow{\alpha}}^{\epsilon}(x)^{-1}}(\omega(T\Fl_{\overrightarrow{\alpha}}^\epsilon(X_1), \dots, T\Fl_{\overrightarrow{\alpha}}^\epsilon(X_k)))\\
  & \hspace{-50pt} - \underbrace{\Omega_{(\Fl_{\overrightarrow{\alpha}}^{\epsilon}(x)^{-1}, \Fl_{\overrightarrow{\alpha}}^{\epsilon}(x))}(\theta(X_1,\dots, X_k))}_{f(\epsilon)} ],
 \end{align*}
where we have used that $\Fl^{-\epsilon}_{\overrightarrow{\alpha}}(\tar(\Fl_{\overrightarrow{\alpha}}^\epsilon(x))) = \Fl_{\overrightarrow{\alpha}}^{\epsilon}(x)^{-1}$. The result now follows from the fact that $\epsilon=0$ is a zero of order $\geq 2$ of $f(\epsilon)$. 
\end{proof}

\section{$\VB$-groupoid cohomology of differential forms}
In this section, we will introduce a cochain complex for which the cocycles in degree 1 are exactly the multiplicative forms with values in $\VB$-groupoids. Also, its cohomology will be a Morita invariant of the groupoid. This indicates that one should consider this complex as the appropriate setting to study connections on vector bundles over stacks. 

Let $B_\bullet \G$ be the nerve of the groupoid $\G \toto M$. It is the simplicial manifolds whose space of $p$-simplices is $B_p\G = \{(g_1, \dots, g_p) \in \G^p \,\, | \,\, \sour(g_i) = \tar(g_{i+1})\}$ and the face maps: $\partial_i : B_p\G \to B_{p-1}\G$, $i=0,\dots, p$ are defined by
$$
\partial_i(g_1, \dots, g_p) =
\begin{cases}
(g_2, \dots, g_p), & \text{ if } i=0,\\
(g_1, \dots, g_{i-1}, g_{i}g_{i+1},  g_{i+2}, \dots, g_{p}), & \text{ if } 1 \leq i \leq p-1,\\
(g_1, \dots, g_{p-1}), & \text{ if } i=p.
\end{cases}
$$
The differentiable cochain complex of $\G$ is $C^\bullet(\G) = C^{\infty}(B_{\bullet}\G)$ with the differential $\delta: C^{p-1}(\G) \to C^{p}(\G)$ given by 
$$
\delta = \sum_{i=0}^{p}(-1)^i \partial^*_i.
$$

For a $\VB$-groupoid $\V \toto E$ over $\G$, we define the complex $C^{p,q}(\V)$ of \textit{differential $\VB$-groupoid forms} as follows:
\begin{equation}
\begin{aligned}
\C^{0,q}(\V) & = \Omega^q(M, C)\\
\C^{p,q}(\V) & = \{\vartheta \in \Omega^q(B_p \G, \pr_1^*\V) \,\,| \,\,\ \widetilde{\sour} \circ \vartheta = \partial_0^*\theta, \, \, \theta \in \Omega^q(B_{p-1}\G, t^*E)\}, \,\,\, p \geq 1
\end{aligned}
\end{equation}
where $\pr_1: (g_1, \dots, g_p) \mapsto g_1$ is the projection on the first arrow and $t:(g_1,\dots, g_{p-1}) \mapsto \tar(g_1)$. The differential $\delta:\C^{p,q}(\V) \to C^{p+1,q}(\V)$ is defined as
\begin{equation}\label{differential}
\begin{aligned}
\delta \vartheta|_g & = - (\tar^*\vartheta) \bullet 0_g - 0_g \bullet (\sour^*\vartheta)^{-1}, \,\, p =0\\
\delta \vartheta & = -\partial_1^*\vartheta \bullet (\partial_0^*\vartheta)^{-1} + \sum_{i=2}^{p+1} (-1)^{i}\partial_i^*\vartheta , \,\, p \geq 1.
\end{aligned}
\end{equation}

It is straightforward to check $\vartheta \in \Omega^q(\G, \V)$ is multiplicative if and only if $\vartheta \in C^{1,q}(\V)$ and $\delta \vartheta = 0$. We shall postpone the proof that $(C^{\bullet, q}(\V), \delta)$ is a differential complex to \S \ref{proofs}.

\begin{example}\label{ex:cech}\em
 For $\V = \G \times \R$ with the trivial representation, one has that
 $$
 C^{p,q}(\G \times \R) = \Omega^q(B_p\G), \,\,\, \delta = \sum_{i=0}^p (-1)^i \partial_i^*.
 $$
 So, $(C^{p,q}(\G \times \R), \delta)$ recovers the Bott-Shulman complex of $q$-differential forms on Lie groupoids. Its cohomology is also known as the \textit{C\"ech cohomology of $\G$ with values in the sheaf of $q$-differential forms $\Omega^q$} \cite{Behrend}. 
\end{example}

\begin{example}\label{ex:2-term_rep}\em
 Let $\V = \sour^*E \oplus \tar^*C \toto E$ be the semidirect product of $\G$ with a ruth $(\partial,\Psi, \Omega)$ and consider  $\vartheta \in \Omega^q(B_p\G, \pr_1^*\V)$. It is straightforward to check that 
$\vartheta \in \C^{p,q}(\V)$ if and only if there exists $\omega \in \Omega^q(B_p\G, \tar^*C)$ and $\theta \in \Omega^q(B_{p-1}\G, \tar^*E)$ such that $\vartheta = (\omega, \partial_0^*\theta)$. In this case, using \eqref{Str}, one can check that 
$$
\delta \vartheta|_{(g_1, \dots, g_{p+1})} = -(\Omega_{(g_1,g_2)} \circ \left((\partial_0 \circ \partial_0)^*\theta\right) - \delta_C(\omega), \,\, \partial_0^*(\delta_E(\theta) + \partial \circ \omega)\,),
$$
where
$$
\delta_C(\omega)|_{(g_1,\dots, g_{p+1})} = \Psi_{g_1}\circ (\partial_0^*\omega) + \sum_{i=1}^{p+1}(-1)^i \partial_i^* \omega
$$
and similarly for $\delta_E$. For $q=0$, this is exactly the complex calculating the cohomology of $\G$ with values in the ruth $\E=C[1]\oplus E$, $H^\bullet(\G, \E)$. It is also important to note that, for $E=0$, the complex reduces to $(\Omega^q(B_\bullet \G, t^*C), \delta_C)$; this complex was introduced in \cite{Cab-Drum} in the context of van Est isomorphisms.
% For the case $q=1$, let us denote $\underline{U}= (U^1, \dots, U^{p+1})$, each $U^j \in T_{g_j}\G$. Then,
% \begin{align*}
%  (\partial_1^*\vartheta)\bullet (\partial_0^*\vartheta)^{-1}(\underline{U}) & = (g_1g_2, c_1, e) \bullet (g_2, c_2, e)^{-1}\\
%  &  \hspace{-60pt}= (g_1 g_2, c_1, e) \bullet (g_2^{-1}, \Omega_{(g_2^{-1}, g_2)}(e) - g_2^{-1} \cdot c_2, \partial(c_2) + g_2 \cdot e)\\ 
%  & \hspace{-60pt}= (g_1, c_1 + (g_1g_2) \cdot (\Omega_{(g_2^{-1}, g_2)}(e) - g^{-1}_2 \cdot c_2) - \Omega_{(g_1g_2, g_2^{-1})}(\partial(c_2)+g_2\cdot e), \partial(c_2) + g_2 \cdot e)\\
%  & \hspace{-60pt} = (g_1, c_1 - g_1 \cdot c_2 + \Omega_{(g_1,g_2)}(e), \partial(c_2) + g_2 \cdot e)
%  \end{align*}
% where $e =\theta((U_3, \dots, U_{p+1})), \, c_1 = \omega((U_1 \bullet U_2, U_3, \dots, U_{p+1})$ and $c_2 = \omega(U_2, \dots, U_{p+1})$ and we have used in the last equality the formulas
% \begin{align*}
% (g_1g_2) \cdot (\Omega_{(g_2^{-1}, g_2)}(e_1) & = \Omega_{(g_1,g_2)}(e_1) + \Omega_{(g_1g_2, g_2^{-1})}(g_2 \cdot e_1)\\
% (g_1g_2) \cdot (g_2^{-1} \cdot c_2) & = g_1 \cdot c_2 - \Omega_{(g_1g_2, g_2^{-1})}(\partial(c_2)).
% \end{align*}
\end{example}

\begin{example}\em
For $q=0$, one has that $C^{p,0}(\V)= C_{\VB}^p(\V)$, the $\VB$-groupoid complex of $\V$ introduced in \cite{Gra-Met1}. This complex gives an intrisic model to the cohomology of $\G$ with values in representation up to homotopy. 
\end{example}

We shall now investigate the invariance of the cohomology $H^\bullet(\C^{\bullet,q}(\V))$ under Morita equivalence. A Lie groupoid morphism $\phi$ from $\G \toto M$ to $\G' \toto M'$ is a Morita map if it is fully faithful (i.e. the source and target maps define a good fibered product of manifolds $\G = \G' \times_{M'\times M'} (M \times M)$) and essentially surjective (the map $(g': y \leftarrow \phi(x), x) \mapsto \tar(g')$ is a surjective submersion from $\G' \times_{M'} M \to M'$). We refer to \cite{Hoyo} for details.

\begin{theorem}\label{thm:morita}
 Let $\V \toto E$ be a $\VB$-groupoid over $\G\toto M$ and $\phi: \G' \to \G$ a Morita map. Then the pull-back of differential forms $\phi^*: \C^{p,q}(\V) \to \C^{p,q}(\phi^*\V)$,
 $$
 (\phi^*\vartheta)(\underline{U}_1, \dots, \underline{U}_q) = \vartheta(T\phi(\underline{U}_1), \dots, T\phi(\underline{U}_q))), \,\,\,\,\,\underline{U}_i \in T(B_p\G).
 $$
 is a quasi-isomorphism.
\end{theorem}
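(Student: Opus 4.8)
The plan is to realize $C^{\bullet,q}(\V)$ as a direct-summand subcomplex of the differentiable cochain complex $C^\bullet(\bG)$ of the big groupoid $\bG = T\G \oplus \dots \oplus T\G \oplus \V^*$ (with $q$ copies of $T\G$), and then to deduce the statement from the Morita invariance of ordinary Lie groupoid cohomology applied to $\bG$. Recall from Section 5 that a $\V$-valued $q$-form on $B_p\G$ is the same datum as a function on $B_p\bG$ which is fiberwise linear in each of the $q$ tangent slots and in the $\V^*$-slot, and that under this dictionary the differential $\delta$ on $C^{\bullet,q}(\V)$ is intertwined with the group-cohomology differential $\delta_\bG$ of $\bG$; this is exactly the reformulation of multiplicativity and of the complex structure as a cocycle condition on $\bG$. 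Thus one obtains an injective chain map $I\colon C^{\bullet,q}(\V)\hookrightarrow C^\bullet(\bG)$ whose image $C^\bullet(\bG)^{\mathrm{lin}}$ consists of the cochains that are homogeneous of degree one in each vector-bundle factor of $\bG$ and alternating under the $S_q$-action permuting the $T\G$-summands.

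First I would check that $C^\bullet(\bG)^{\mathrm{lin}}$ is a \emph{direct summand} subcomplex. By definition of a VB-groupoid, the homogeneous structure $\h_\lambda$ on each factor of $\bG$ acts by Lie groupoid automorphisms, hence simplicially on $B_\bullet\bG$; therefore $\h_\lambda^*$ commutes with $\delta_\bG$. The projection onto the degree-one part in a given factor is the Taylor coefficient $\pi_1\colon f\mapsto \tfrac{d}{dt}\big|_{t=0}(\h_t^* f)$, which is a chain map (differentiate $\delta_\bG\h_t^*=\h_t^*\delta_\bG$ at $t=0$), is fiberwise linear, and is the identity on fiberwise-linear cochains, so $\pi_1^2=\pi_1$ and $C^\infty=\operatorname{im}\pi_1\oplus\ker\pi_1$. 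The scalings of distinct factors commute, and the $S_q$-action (being simplicial) commutes with $\delta_\bG$ and with the tangent-factor projections; composing the $q+1$ commuting projections and averaging over $S_q$ with the sign representation yields a chain-map retraction $P\colon C^\bullet(\bG)\to C^\bullet(\bG)^{\mathrm{lin}}$. This exhibits $C^{\bullet,q}(\V)\cong C^\bullet(\bG)^{\mathrm{lin}}$ as a direct-summand subcomplex, with $\delta_\bG$ respecting the splitting.

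Next I would transport the Morita map. The assignment $\G\rightsquigarrow\bG$ is natural, and $\phi\colon\G'\to\G$ induces a map of big groupoids $\phi_{\bG}\colon\bG'\to\bG$ covering $\phi$, built from $T\phi$ on the tangent factors and from the canonical map $(\phi^*\V)^*\to\V^*$ on the dual factor. Since the tangent functor preserves fully faithful and essentially surjective maps (it commutes with the clean fibered products and submersions defining Morita-ness), and the pullback-and-dualize operations on VB-groupoids preserve Morita maps \cite{Hoyo-Ort}, $\phi_{\bG}$ is again a Morita map between Lie groupoids. By the Morita invariance of differentiable Lie groupoid cohomology \cite{Behrend} applied to $\bG$, the pullback $\phi_{\bG}^*\colon C^\bullet(\bG)\to C^\bullet(\bG')$ is a quasi-isomorphism.

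Finally I would conclude. The homogeneous structures and the $S_q$-action are natural in $\G$, so $\phi_{\bG}^*$ commutes with all the projections above and hence preserves the decomposition $C^\bullet(\bG)=C^\bullet(\bG)^{\mathrm{lin}}\oplus\ker P$. Since cohomology commutes with direct sums and a direct sum of maps is an isomorphism iff each summand is, a quasi-isomorphism respecting such a decomposition restricts to a quasi-isomorphism on each summand. Restricting $\phi_{\bG}^*$ to $C^\bullet(\bG)^{\mathrm{lin}}$ therefore recovers, under $I$, the pullback $\phi^*\colon C^{\bullet,q}(\V)\to C^{\bullet,q}(\phi^*\V)$, which is consequently a quasi-isomorphism. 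The main obstacle is the first step: pinning down precisely that the image of $I$ --- including the source-compatibility constraint $\widetilde{\sour}\circ\vartheta=\partial_0^*\theta$ and the concentration of the $\V$-value on the leading arrow --- coincides exactly with the homogeneity-and-alternation eigencomponent cut out by the natural operators, so that it is genuinely a direct summand preserved by $\phi_{\bG}^*$. The careful bookkeeping that identifies forms on $B_\bullet\G$ with the corresponding multilinear cochains on $B_\bullet\bG$ and matches the two differentials is where the real work lies; once that dictionary is in place, the homogeneity-plus-naturality mechanism makes the Morita invariance essentially formal.
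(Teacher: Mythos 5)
Your overall strategy --- embed $C^{\bullet,q}(\V)$ into the differentiable cochain complex of $\bG=\times_\G^q T\G\times_\G\V^*$, transport the Morita map to $\bG$, and use the chain projection onto fiberwise multilinear, alternating cochains --- is exactly the skeleton of the paper's proof (the projection you call $P$ is the map $P_{\rm ext}:C^\bullet(\bG)\to C^\bullet_{\rm ext}(\bG)$ borrowed from \cite{Cab-Drum}, and the induced map $\bG'\to\bG$ being Morita is \cite[Thm.~3.5]{Hoyo-Ort}). However, there is a genuine gap at the step you yourself flag as ``the main obstacle'': the image of $C^{p,q}(\V)$ in $C^p(\bG)$ is \emph{not} the full eigencomponent of multilinear, alternating cochains. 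It is a proper subcomplex of $C^p_{\rm ext}(\bG)$. Concretely, a multilinear alternating cochain on $B_p\bG$ is a $q$-form on $B_p\G$ valued in the dual of the vector bundle $B_p(\V^*)\to B_p\G$, whereas an element of $C^{p,q}(\V)$ is valued only in $\pr_1^*\V$ and must in addition satisfy $\widetilde{\sour}\circ\vartheta=\partial_0^*\theta$. (Already for $p=1$, $q=0$: $C^1_{\rm ext}(\bG)=\Gamma(\G,\V)$, while $C^{1,0}(\V)$ is the subspace of sections whose $\widetilde{\sour}$-projection descends to $M$.) These extra constraints are the conditions (i) and (ii) of the paper's Lemma \ref{diff_char}; they are not homogeneity or symmetry conditions, so they are not cut out by the scaling operators $\h_\lambda^*$ or the $S_q$-action, and there is no natural projector commuting with $\delta_\bG$ whose image they form. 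Your direct-summand mechanism therefore does not apply to them.

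The missing content is precisely the paper's lemma that the inclusion $\F:C^{\bullet,q}(\V)\hookrightarrow C^\bullet_{\rm ext}(\bG)$ is a quasi-isomorphism. This is not formal: it is proved by a recursive homotopy argument (given $f_0\in C^p_{\rm ext}(\bG)$ with $\delta f_0$ in the subcomplex, one constructs $f_1\in C^{p-1}_{\rm ext}(\bG)$ using the inverses $\underline{U}_{p-1}^{-1}\bullet\dots\bullet\underline{U}_1^{-1}$ and a choice of horizontal splitting $h:\sour^*C^*\to\V^*$, so that $f_0+\delta f_1$ satisfies condition (i) ``one step further''). This is the same phenomenon as in the $q=0$ case, where the VB-groupoid complex of \cite{Gra-Met1} sits inside the linear cochain complex as a quasi-isomorphic but non-complemented subcomplex. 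Once that lemma is in place, your concluding diagram-chase (Morita invariance of $H^\bullet(\bG)$, compatibility of $\phi_\bG^*$ with $P_{\rm ext}$, and the commuting square relating $\phi^*$ and $\phi_\bG^*$) does finish the proof exactly as in the paper; but as written, your argument replaces the one nontrivial analytic step by an identification that is false.
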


By applying Theorem \ref{thm:morita} to the examples \ref{ex:cech} and \ref{ex:2-term_rep}, one obtains the following corollaries: 

\begin{corollary}\cite{Behrend}
 The C\"ech cohomology of $\G$ with values in the sheaf of $q$-differential forms $\Omega^q$ is a Morita invariant of $\G$.
\end{corollary}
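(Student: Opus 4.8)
The plan is to deduce this statement directly from Theorem \ref{thm:morita} applied to the trivial $\VB$-groupoid $\V = \G \times \R$ carrying the trivial representation. The entire content of the corollary is already packaged in Theorem \ref{thm:morita}, so the task is essentially one of specialization together with the standard passage from Morita maps to Morita equivalences.

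First I would invoke Example \ref{ex:cech}, which identifies the complex $C^{\bullet, q}(\G \times \R)$ with the Bott-Shulman complex $(\Omega^q(B_\bullet \G), \delta)$ of $q$-differential forms on the nerve, whose cohomology is by definition the C\"ech cohomology of $\G$ with values in the sheaf $\Omega^q$ in the sense of \cite{Behrend}. Next, for a Morita map $\phi: \G' \to \G$, I would observe that the pull-back $\VB$-groupoid $\phi^*(\G \times \R)$ is canonically isomorphic to $\G' \times \R$ equipped with the trivial representation: pulling back the trivial line bundle along $\phi$ yields the trivial line bundle over $\G'$, and the trivial $\VB$-groupoid structure is manifestly preserved under pull-back.

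With these identifications in place, Theorem \ref{thm:morita} specializes to the assertion that the map
$$
\phi^*: C^{\bullet, q}(\G \times \R) \longrightarrow C^{\bullet, q}(\G' \times \R)
$$
is a quasi-isomorphism; that is, the C\"ech cohomologies of $\G$ and $\G'$ with values in $\Omega^q$ agree whenever $\G'$ maps to $\G$ by a Morita map. Finally, to upgrade this invariance under individual Morita maps into genuine Morita invariance, I would use that any Morita equivalence between two Lie groupoids is realized by a span of Morita maps out of a common groupoid; applying the quasi-isomorphism statement to each leg of the span identifies the two cohomologies, which establishes the claim.

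Since every step is an immediate specialization of Theorem \ref{thm:morita}, there is no substantial obstacle here. The only point requiring a line of verification is the identification of the pull-back $\VB$-groupoid $\phi^*(\G \times \R)$ with the trivial $\VB$-groupoid on $\G'$, after which the result is automatic.
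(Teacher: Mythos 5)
Your proposal is correct and follows exactly the paper's route: the corollary is obtained by specializing Theorem \ref{thm:morita} to the trivial $\VB$-groupoid $\V = \G \times \R$ and using the identification of $C^{\bullet,q}(\G\times\R)$ with the Bott-Shulman complex from Example \ref{ex:cech}. The only additions you make (checking $\phi^*(\G\times\R)\cong\G'\times\R$ and passing from Morita maps to Morita equivalences via a span) are routine points the paper leaves implicit.
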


\begin{corollary}\cite{Hoyo-Ort}
The cohomology of $\G$ with values in a 2-term representation up to homotopy is a Morita invariant.
\end{corollary}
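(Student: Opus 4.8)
The plan is to realize $\C^{\bullet,q}(\V)$ as a direct-summand subcomplex of the differentiable cochain complex $C^\bullet(\bG)$ of the big groupoid $\bG = T\G\oplus\dots\oplus T\G\oplus\V^*$ (with $q$ copies of $T\G$), and then to deduce the statement from the Morita invariance of the differentiable cohomology of $\bG$ together with the compatibility of the pullback with the vector bundle structure of $\bG$. First I would set up the embedding. To a cochain $\vartheta\in\C^{p,q}(\V)$ I associate the function $F_\vartheta\in C^\infty(B_p\bG)$ defined, on a composable string whose first entry over $g_1$ is $(U_1,\dots,U_q,\psi)\in (T_{g_1}\G)^{q}\oplus\V^*_{g_1}$, by $F_\vartheta = \<\psi,\vartheta(U_1,\dots,U_q)\>$, using $T(B_p\G)=B_p(T\G)$ to read the $U_j$ as tangent vectors to $B_p\G$. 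The pairing identity \eqref{dual_mult}, together with $\<\psi^{-1},v^{-1}\>=-\<\psi,v\>$, turns the operations $\bullet$ and $(\cdot)^{-1}$ appearing in \eqref{differential} into the signed sum $\delta=\sum_i(-1)^i\partial_i^*$ of $C^\bullet(\bG)$; hence $\vartheta\mapsto F_\vartheta$ is an injective chain map, and a direct check (using the source constraint $\widetilde{\sour}\circ\vartheta=\partial_0^*\theta$) identifies its image with the functions on $B_\bullet\bG$ that are fiberwise multilinear and alternating in the $T\G$-slots and fiberwise linear in the $\V^*$-slot.

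The second ingredient is a Morita map on big groupoids. Since the tangent functor preserves fibered products and surjective submersions, $T\phi\colon T\G'\to T\G$ is again a Morita map; likewise the pullback $\phi^*(\V^*)=(\phi^*\V)^*\to\V^*$ along the Morita map $\phi$ is a VB-Morita map, hence in particular Morita on total spaces. As Morita maps are stable under Whitney sums, the induced morphism $\phi_{\bG}\colon\bG'\to\bG$, with $\bG'=T\G'\oplus\dots\oplus T\G'\oplus(\phi^*\V)^*$, is a Morita map of Lie groupoids. By the Morita invariance of differentiable cohomology, the pullback $\phi_{\bG}^*\colon C^\bullet(\bG)\to C^\bullet(\bG')$ is therefore a quasi-isomorphism. (In the case $q=0$, where $\bG=\V^*$, this already recovers the statement of \cite{Hoyo-Ort}; if one wishes to keep the argument self-contained one reduces to submersive Morita maps and constructs an explicit homotopy operator.)

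Finally I would match the two pictures. The Whitney-sum decomposition of $\bG$ carries the multiparameter family of homogeneous structures $\h_{\lambda_1},\dots,\h_{\lambda_q},\h_{\mu}$ scaling the individual summands; since the multiplication of $\bG$ is componentwise, each of these is a groupoid automorphism, as is every permutation of the $q$ identical $T\G$-factors. Consequently they all commute with the nerve face maps and hence with $\delta$, so polarization onto the multidegree-$(1,\dots,1|1)$ alternating component defines an idempotent chain map $P$ on $C^\bullet(\bG)$ whose image is precisely the embedded copy of $\C^{\bullet,q}(\V)$; the analogous $P'$ lives on $C^\bullet(\bG')$. Because $\phi_{\bG}$ is assembled from the linear maps $T\phi$ and the dual pullback, it intertwines all these scalings and permutations, whence $\phi_{\bG}^*P=P'\phi_{\bG}^*$. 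A quasi-isomorphism commuting with a pair of idempotent chain maps restricts to a quasi-isomorphism between their images: each complex splits as $\operatorname{im}\oplus\ker$ preserved by the map, and a direct sum of maps is a quasi-isomorphism only if each summand is. Thus $\phi_{\bG}^*$ restricts on images to the map induced by $\phi^*$ on forms, yielding the desired quasi-isomorphism $\phi^*\colon\C^{\bullet,q}(\V)\to\C^{\bullet,q}(\phi^*\V)$.

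The hard part will be the interface between the two halves: one must guarantee that the Morita quasi-isomorphism of the big groupoids is compatible with the fiberwise-polynomial grading, so that it descends to the single graded piece that is $\C^{\bullet,q}(\V)$. This is exactly what the scaling-equivariance arranges, since both $\delta$ and $\phi_{\bG}^*$ commute with the torus of homogeneous structures. A secondary technical point, which must be verified carefully rather than merely asserted, is the identification of the image of the embedding, namely that the source constraint cutting out $\C^{p,q}(\V)$ corresponds precisely to the well-definedness of $F_\vartheta$ on composable strings of $B_\bullet\bG$.
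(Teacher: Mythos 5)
Your overall strategy---embed $\C^{\bullet,q}(\V)$ into the differentiable cochain complex of the big groupoid $\bG$, invoke Morita invariance of $H^\bullet(\bG)$, and then cut back down---is exactly the strategy the paper uses to prove Theorem \ref{thm:morita}, of which the stated corollary is the $q=0$ instance (the paper's own proof of the corollary is literally one line: apply Theorem \ref{thm:morita} to the semidirect product of Example \ref{ex:2-term_rep}, where $C^{\bullet,0}(\V)=C^\bullet_{\VB}(\V)$ computes $H^\bullet(\G,\E)$). So you are in effect re-proving the theorem, and most of your individual ingredients (the chain-map property of $\vartheta\mapsto F_\vartheta$ via \eqref{dual_mult} and $\langle\psi^{-1},v^{-1}\rangle=-\langle\psi,v\rangle$, the fact that $\bG'\to\bG$ is Morita, the projection onto the multilinear alternating component) match the paper's.

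There is, however, a genuine gap at the step where you claim that the polarization idempotent $P$ has image ``precisely the embedded copy of $\C^{\bullet,q}(\V)$.'' It does not. The image of $P$ is the full complex $C^\bullet_{\rm ext}(\bG)$ of functions on $B_p\bG$ that are multilinear with respect to the decomposition \eqref{form_decomp} and alternating in the $T\G$-slots, and the image of the embedding is a \emph{proper} subcomplex of this: it is cut out by the two conditions of Lemma \ref{diff_char}, namely vanishing when the first $\V^*$-entry is a zero vector and invariance under left translation by such elements (the latter encoding the constraint $\widetilde{\sour}\circ\vartheta=\partial_0^*\theta$). No scaling or permutation action detects these conditions, so your torus-equivariance argument only reduces the problem to $C^\bullet_{\rm ext}(\bG)$. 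Already for $p=1$, $q=0$ one sees the discrepancy: $C^1_{\rm ext}(\bG)=\Gamma(\G,\V)$ while $C^{1,0}(\V)$ consists only of those sections whose composition with $\widetilde{\sour}$ descends to $M$. What is missing is the statement that the inclusion $\C^{\bullet,q}(\V)\hookrightarrow C^\bullet_{\rm ext}(\bG)$ is itself a quasi-isomorphism; this is the heart of the argument (it is the analogue of the Gracia-Saz--Mehta/del Hoyo--Ortiz result that the $\VB$-complex computes the linear cohomology of $\V^*$), and the paper proves it by a nontrivial normalization recursion that constructs an explicit primitive using a horizontal splitting $h:\sour^*C^*\to\V^*$. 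Without this lemma your chain of reductions does not reach $\C^{\bullet,q}(\V)$, and the corollary does not follow. Your closing ``secondary technical point'' about well-definedness of $F_\vartheta$ is a red herring: $F_\vartheta$ is always well defined; the real issue is the characterization of, and the quasi-isomorphism onto, its image.
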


We shall postpone the proof of Theorem \ref{thm:morita} to \S 5.4.

\section{The proofs}\label{proofs}
\subsection{Strategies}
Let us begin by explaining the general philosophy which will guide us through the proofs of Theorem \ref{thm:main} and \ref{thm:morita}. The same strategy used in \cite{Bur-Drum} to study multiplicative tensor fields on a Lie groupoid will give us the proper viewpoint to tackle the correspondence between multiplicative differential forms with values in $\VB$-groupoids and IM-forms with values in $\VB$-algebroids. The presence of coefficients encoded in the $\VB$-groupoid will force us to adapt (in a non-trivial way) most of the techniques used there as one can note by comparing this section with \cite[Section~4]{Bur-Drum}. 

The key point of the strategy is to treat a differential form $\vartheta \in \Omega^k(\G, \V)$ as a \textit{componentwise linear function} (see Definition in \S \ref{embedding} below)  on 
\begin{equation}\label{big_groupoid}
\bG= \underbrace{T\G \times_\G \dots \times_\G T\G}_{k-times}\times_\G \V^*.
\end{equation}
The space $\bG$ is a Lie groupoid and our aim is to establish a dictionary between the multiplicativity properties of $\vartheta$ and those of the corresponding function on $\bG$ as shown below:

 \begin{tabular}[t]{|c|c|}
 \hline
  \textbf{Differential forms} & \textbf{Componentwise linear}\\
  \textbf{with values in $\V$}& \textbf{functions on $\bG$}\\ \hline
  Multiplicativity & Cocycle equation on\\
  & the differentiable cochain complex\\ \hline
  Operations $L_\Delta, i_{\overrightarrow{\alpha}}$,  & Lie derivative along special\\
  restriction to the units & right-invariant vector fields\\ \hline
  Infinitesimal components $(D,l,\theta)$ &  Infinitesimal cocyle evaluated \\
  & on special sections of $\bA=\mathrm{Lie}(\bG)$\\ \hline
  IM-equations & Infinitesimal cocycle equation on the \\
  & Chevalley-Eilenberg complex \\ \hline
  Complex of differential  & Subcomplex of the cochain\\
  $\VB$-groupoid forms &   differentiable complex\\ \hline
 \end{tabular}
\vspace{10pt}

 Once this is settled, Theorem \ref{thm:main} will be a simple consequence of the 1-1 correspondence between cocycles on the differentiable cohomology of a (1-connected) Lie groupoid and cocycles on the Chevalley-Eilenberg of its Lie algebroid. Also,
 Theorem \ref{thm:morita} will follow from arguments similar to those used in \cite{Hoyo-Ort} to prove the Morita invariance of the $\VB$-groupoid cohomology of $\V$.

\subsection{General embedding trick}\label{embedding}
In this first section we recall how tensor fields can be embedded as functions on fiber products. We also extend the formulas on \cite[Section~4.1]{Bur-Drum} relating Lie derivatives of tensor fields and Lie derivatives of their corresponding componentwise linear functions along lifted vector fields to a more general setting. These formulas will serve as basis for most of the calculations done in the proof of Theorem \ref{thm:main}. 

Given vector bundles $E_i \to M$, denote by $\times_M E_i$ the $k$-fold fiber product $E_1 \times_M \dots \times_M E_k$. Consider the embedding
\begin{equation}\label{F_def}
\begin{array}{ccc}
\c: \,\,\,\Gamma(E_1^* \otimes \dots \otimes E_k^*) & \hookrightarrow & \hspace{-30pt}C^\infty(\times_M E_i)\vspace{10pt}\\
\tau = \mu_1 \otimes \dots \otimes \mu_k & \mapsto &  \c_\tau = \ell_{\mu_1} \circ \pr^1 \cdots \ell_{\mu_k} \circ \pr^k,
\end{array}
\end{equation}
 where $\pr^i: E_1 \times_M \dots \times_M E_k \to E_i$ is the natural projection. We shall refer to the functions in the image as \textit{componentwise linear functions}. When $E_1 = \dots = E_k = E$, we see $\Gamma(\Lambda^k E^*)$ inside $\Gamma(\otimes^k E^*)$ as usual:
$$
\mu_1 \wedge \dots \wedge \mu_k \mapsto \sum_{\sigma \in S_k} \mathrm{sgn}(\sigma) \mu_{\sigma(1)} \otimes \dots \otimes \mu_{\sigma(k)}
$$
\begin{remark}{\em
The space $C^\infty(\times_M E_i)$ has a $C^\infty(M)$-module structure defined by multiplication of functions (see $C^\infty(M)$ inside it by pulling back functions under the projection $\times_M E_i \to M$). The map $\c$ is a morphism of $C^\infty(M)$-modules.  
}
\end{remark}

There is a notion of tensor product for derivations that we now describe. Let $\Delta_i: \Gamma(E_i^*) \to \Gamma(E_i^*)$, $i=1, \dots, k$, be derivations having the same symbol $\sharp(\Delta) = X \in \frakx(M)$. The \textit{tensor product derivation} is defined as
 \begin{align*}
 \otimes_1^k \Delta_i = \Delta_1\otimes \dots \otimes \Delta_k&: \Gamma(E_1^* \otimes \dots \otimes E_k^*) \to \Gamma(E_1^* \otimes \dots \otimes E_k^*)\\
 (\otimes_1^k\Delta_i) (\mu_1\otimes \dots \otimes \mu_k) & = \sum_{i=1}^k \mu_1 \otimes \dots \otimes \Delta_i(\mu_1) \otimes \dots \otimes \mu_k
 \end{align*}
 
\begin{example}\em
   Let $\tau \in \Omega^k(M, E)$ and $\Delta: \Gamma(E) \to \Gamma(E)$ be a derivation with symbol equal to $X \in \frakx(M)$. One has that 
 \begin{equation}
L_\Delta(\tau) = (\underbrace{\Lie_X \otimes \dots \otimes \Lie_X}_{k-times} \otimes \Delta) (\tau), 
 \end{equation}
 where $L_\Delta$ is the operator \eqref{der_Lie}.
  \end{example}
  
\begin{example}\em\label{tensor_example}
Let $\tau \in \Omega^k(M, \Lambda^l A)$, for a Lie algebroid $(A, [\cdot, \cdot], \rho)$. In this case, 
$$
(\underbrace{\Lie_{\rho(\alpha)} \otimes \dots \otimes \Lie_{\rho(\alpha)}}_{k-times} \otimes \underbrace{[\alpha, \cdot] \otimes \dots \otimes [\alpha, \cdot]}_{l-times})(\tau)= a \cdot \tau,
$$
where $\cdot$ is the natural action of $\Gamma(A)$ on $\Omega^k(M, \Lambda^lA)$ defined as
$$
\alpha \cdot (\xi \otimes \frakx) = (\Lie_{\rho(\alpha)} \xi) \otimes \frakx + \xi \otimes [\alpha, \frakx],
$$
for the Schouten-Nijenhuis bracket $[\cdot, \cdot]$ on $\Lambda^\bullet A$.
\end{example}

Our next result concerns some equivariance properties of the map $\c$. It should be seen as a generalization of Proposition~4.1 in \cite{Bur-Drum} (when applied to Example \ref{tensor_example}).
 
 \begin{proposition}
 Let $\Delta_i: \Gamma(E_i^*) \to \Gamma(E_i^*)$ be derivations having the same symbol, $W_i \in \frakx(E_i)$ the corresponding linear vector fields and $\varphi_j \in \Gamma(E_j)$. One has that
 \begin{align}
 \label{der_inv} \Lie_{(W_1, \dots, W_k)} \c_\tau & = \c_{\otimes_1^k\Delta_i \tau}  \\
 \label{contraction_inv}  \Lie_{(0,\dots, \varphi_j^\uparrow, \dots, 0)} \c_\tau & = \c_{\tau(\dots, \varphi_j, \dots)} \circ \gamma_{(j)},
 \end{align}
 where $\gamma_{(j)}: \prod_i E_i \to \prod_{i \neq j} E_i$ is the forgetful projection.
\end{proposition}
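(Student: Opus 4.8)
The plan is to verify both identities pointwise, which by locality lets me work over a trivializing neighbourhood and exploit the $C^\infty(M)$-module structure. Since $\c$ is a morphism of $C^\infty(M)$-modules and every section of $E_1^* \otimes \cdots \otimes E_k^*$ is locally a finite sum $\sum_a f_a\, \mu_1^a \otimes \cdots \otimes \mu_k^a$ with $f_a \in C^\infty(M)$ and $\mu_i^a \in \Gamma(E_i^*)$, it suffices to prove \eqref{der_inv} and \eqref{contraction_inv} for a decomposable tensor $\tau = \mu_1 \otimes \cdots \otimes \mu_k$; the coefficient functions pose no difficulty, since in each identity both sides transform in the same way under multiplication of $\tau$ by $f \in C^\infty(M)$, using that $\c$ is $C^\infty(M)$-linear and that the derivations involved have the common symbol $X$. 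Before starting I would record the point that makes $(W_1, \dots, W_k)$ meaningful: the fibre product $\times_M E_i$ sits inside $E_1 \times \cdots \times E_k$ as the locus where all base points agree, and a product of linear vector fields is tangent to this locus precisely when each $W_i$ covers the same vector field on $M$ — this is exactly the common-symbol hypothesis. Once this is in place one has $T\pr^i \circ (W_1, \dots, W_k) = W_i \circ \pr^i$, so the projection formulas needed below hold.

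For \eqref{der_inv}, on a decomposable $\tau$ the function $\c_\tau = \prod_{i=1}^k \ell_{\mu_i} \circ \pr^i$ is a product, so the Leibniz rule for $\Lie_{(W_1, \dots, W_k)}$ expands it into a sum of $k$ terms, each differentiating a single factor. Using the projection relation together with the defining property of the derivation attached to a linear vector field (see Appendix \ref{app:linear}), namely $\Lie_{W_i} \ell_{\mu_i} = \ell_{\Delta_i \mu_i}$, the $i$-th term equals $\c_{\mu_1 \otimes \cdots \otimes \Delta_i \mu_i \otimes \cdots \otimes \mu_k}$. Summing over $i$ reproduces exactly the definition of the tensor product derivation, yielding $\c_{(\otimes_1^k \Delta_i)\tau}$.

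For \eqref{contraction_inv}, the same Leibniz expansion applies to $\Lie_{(0, \dots, \varphi_j^\uparrow, \dots, 0)}$, but now every factor is differentiated by the zero vector field except the $j$-th, so only the $j$-th summand survives. The vertical-lift computation $\Lie_{\varphi_j^\uparrow} \ell_{\mu_j} = \langle \mu_j, \varphi_j \rangle$ turns that factor into a function pulled back from $M$, leaving $\langle \mu_j, \varphi_j\rangle \prod_{l \neq j} \ell_{\mu_l} \circ \pr^l$. This is precisely $\c_{\tau(\dots, \varphi_j, \dots)}$ read on $\prod_{i \neq j} E_i$ and then pulled back along the forgetful projection $\gamma_{(j)}$, which is the right-hand side.

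I do not expect a genuine obstacle here: both identities become routine once organised around the two single-factor formulas $\Lie_{W_i}\ell_{\mu_i} = \ell_{\Delta_i\mu_i}$ and $\Lie_{\varphi_j^\uparrow}\ell_{\mu_j} = \langle \mu_j, \varphi_j\rangle$. The only point deserving genuine care — and the one I would spell out explicitly — is the reduction to decomposables together with the observation that the common-symbol hypothesis is exactly what makes $(W_1, \dots, W_k)$ a well-defined vector field tangent to the fibre product; everything else is bookkeeping with the Leibniz rule.
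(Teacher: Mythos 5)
Your proof is correct and follows essentially the same route as the paper's: reduce to decomposable tensors, apply the Leibniz rule to the product $\c_\tau = \prod_i \ell_{\mu_i}\circ\pr^i$, and invoke the two single-factor identities $\Lie_{W_i}\ell_{\mu_i} = \ell_{\Delta_i\mu_i}$ and $\Lie_{\varphi_j^\uparrow}\ell_{\mu_j} = \langle\mu_j,\varphi_j\rangle\circ p_j$. The only difference is that you spell out the reduction to decomposables and the role of the common-symbol hypothesis, which the paper leaves implicit.
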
 
 \begin{proof}
  The result follows \eqref{F_def}, the Leibniz rule for Lie derivatives and
  $$
  \Lie_{W_j} \ell_{\mu_j} = \ell_{\Delta_j(\mu_j)}, \,\,\, \Lie_{\varphi_j^\uparrow} \ell_{\mu_j} = \<\mu_j, \varphi_j\> \circ p_j,
  $$
  where $\mu_j \in \Gamma(E_j^*)$ and $p_j: E_j \to M$ is the projection.
 \end{proof}

 \begin{corollary}\label{cor:action}
  Let $\tau \in \Omega^k(M, E)$ and $\Delta: \Gamma(E) \to \Gamma(E)$ be a derivation with symbol $X \in \frakx(M)$. One has that
  \begin{align*}
  \Lie_{(X^{T,k}, W)} \c_\tau & = \c_{L_\Delta \tau}, \\
  \Lie_{(Y^{\uparrow, k}_{(j)},0)} \c_\tau & = (-1)^{j-1} \c_{i_Y \tau} \circ \gamma_{(j)}, \,\, j=1, \dots, k,\\
  \Lie_{(0, \varphi^\uparrow)} \c_\tau & = \c_{\<\varphi, \tau\>} \circ \gamma_{(k+1)},
  \end{align*}
  where $(X^{T,k}, W), Y^{\uparrow,k}_{(j)} \in \frakx(\times_M^k TM)$ are the vector fields
  \begin{align*}
  (X^{T, k}, W)(Z_1, \dots, Z_k,e) & = (X^T(Z_1), \dots, X^T(Z_k), W(e)),\\
  Y^{\uparrow,k}_{(j)}(Z_1, \dots, Z_k) & = (0_{Z_1}, \dots,0_{Z_{j-1}}, Y^\uparrow(Z_j), 0_{Z_{j+1}}, \dots, 0_{Z_k}),
  \end{align*}
  $(\cdot)^T, (\cdot)^\uparrow$ is the tangent and vertical lift, respectively, $W$ is the linear vector field associated to $\Delta$.
 \end{corollary}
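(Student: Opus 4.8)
The plan is to derive all three identities from the preceding Proposition after fitting $\tau$ into the framework of \eqref{F_def}. First I would regard $\tau \in \Omega^k(M,E)$ as a section of $E_1^* \otimes \dots \otimes E_{k+1}^*$ with $E_1 = \dots = E_k = TM$ and $E_{k+1} = E^*$ (so that $E_{k+1}^* = E$); the associated fiber product is then $\times_M^k TM \times_M E^*$ and $\c_\tau$ is a function on it. Under the dictionary of the Proposition, in which a derivation of $\Gamma(E_i^*)$ corresponds to a linear vector field on $E_i$, the tangent lift $X^T$ is the linear vector field on $TM$ whose derivation on $\Gamma(T^*M)$ is $\Lie_X$ (i.e. $\Lie_{X^T}\ell_\mu = \ell_{\Lie_X\mu}$ for $\mu \in \Omega^1(M)$), while $W$ is the linear vector field on the last factor $E^*$ whose derivation on $\Gamma(E) = \Gamma(E_{k+1}^*)$ is $\Delta$. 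With this in place, every vector field occurring in the statement is of the form handled by the Proposition.

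For the first identity I would apply \eqref{der_inv} with $\Delta_i = \Lie_X$ for $i \le k$ and $\Delta_{k+1} = \Delta$, so that the corresponding list of linear vector fields is precisely $(X^{T,k}, W)$. The tensor product derivation $\otimes_1^{k+1}\Delta_i$ then acts on $\tau$ as $\Lie_X \otimes \dots \otimes \Lie_X \otimes \Delta$, which by the Example identifying this operator with $L_\Delta$ equals $L_\Delta\tau$; this gives $\Lie_{(X^{T,k},W)}\c_\tau = \c_{L_\Delta\tau}$.

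The remaining two identities come from \eqref{contraction_inv}. Taking $j \le k$ and $\varphi_j = Y \in \frakx(M) = \Gamma(TM)$ yields $\Lie_{(Y^{\uparrow,k}_{(j)},0)}\c_\tau = \c_{\tau(\dots, Y, \dots)}\circ\gamma_{(j)}$, with $Y$ inserted in the $j$-th slot. Because $\tau$ is alternating in its $k$ covariant arguments, moving $Y$ from the $j$-th to the first slot produces the sign $(-1)^{j-1}$ and identifies $\tau(\dots, Y, \dots)$ with $(-1)^{j-1}\,i_Y\tau$, which is the second identity. Taking instead $j = k+1$ and $\varphi = \varphi_{k+1} \in \Gamma(E^*)$, the contraction pairs the $E$-valued part of $\tau$ with $\varphi$, so $\tau(\dots, \varphi)$ equals $\<\varphi, \tau\> \in \Omega^k(M)$; here no sign appears since the last slot does not belong to the alternating block, giving the third identity.

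I expect the genuinely delicate points to be bookkeeping rather than conceptual: confirming that the tangent lift corresponds to $\Lie_X$ (so that $X^{T,k}$ is the correct list of $W_i$'s in \eqref{der_inv}) and correctly tracking the permutation sign $(-1)^{j-1}$ coming from the antisymmetry of $\tau$ when passing from the $j$-th slot contraction to $i_Y\tau$. Once these are settled, the corollary is an immediate specialization of the Proposition together with the Example computing $L_\Delta$.
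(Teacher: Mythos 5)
Your proposal is correct and is exactly the argument the paper intends: the corollary is stated without proof as an immediate specialization of the preceding Proposition (with $E_1=\dots=E_k=TM$, $E_{k+1}=E^*$, using that $X^T$ corresponds to $\Lie_X$ and that $W\in\frakx(E^*)$ corresponds to $\Delta:\Gamma(E)\to\Gamma(E)$), combined with the Example identifying $\Lie_X\otimes\dots\otimes\Lie_X\otimes\Delta$ with $L_\Delta$ and the sign $(-1)^{j-1}$ from antisymmetry. Your bookkeeping of the duality convention and of the permutation sign is accurate.
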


\subsection{Proof of Theorem \ref{thm:main}}\label{proof1}
\subsubsection{From global to infinitesimal}
Given a $\VB$-groupoid $\V \toto E$ over $\G \toto M$, there is a natural groupoid $\bG \toto \bM$, where
$$
\bG= \times_\G^k T\G \times_\G \V^*, \,\,\, \bM = \times_M^k TM \times_M C^*
$$
and the groupoid structure maps are defined as the fiber product of each of the corresponding maps on $T\G$ and $\V^*$. \footnote{Note that we are using the fact that the projection maps $(T\G \toto TM) \to (\G \toto M)$ and $(\V^* \toto C^*) \to (\G \toto M)$ fulfill the conditions for the fiber product to have a Lie groupoid structure. We refer to the appendix of \cite{Bur-Cab-Hoy} for a detailed discussion regarding fiber products of Lie groupoids.} 

\begin{proposition}
 $\vartheta \in \Omega^k(\G, \V)$ is multiplicative if and only if $\c_\vartheta$ is a cocycle on the differentiable cohomology of $\bG$.
\end{proposition}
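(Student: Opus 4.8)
The plan is to reduce the statement to a single identity comparing the multiplication of $\bG$ with that of $\V$, and then to read it off in each direction. First I would record the explicit form of the componentwise linear function attached to $\vartheta$: for $\mathsf{g} = (U_1,\dots,U_k,\psi) \in \bG_g$ with $U_i \in T_g\G$ and $\psi \in \V^*_g$ one has $\c_\vartheta(\mathsf{g}) = \<\psi, \vartheta(U_1,\dots,U_k)\>$. Because the structure maps of $\bG$ are the fibre products of those of $T\G$ and $\V^*$, a composable pair consists of $\mathsf{g}_1 = (U_1,\dots,U_k,\psi_1)$ and $\mathsf{g}_2 = (V_1,\dots,V_k,\psi_2)$ with $(U_i,V_i)$ composable in $T\G$ for each $i$ and $(\psi_1,\psi_2)$ composable in $\V^*$, and their product is $(U_1\cdot V_1,\dots,U_k\cdot V_k,\psi_1\bullet\psi_2)$. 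Writing $U = (U_1,\dots,U_k)$, $V = (V_1,\dots,V_k)$ and $U\cdot V = (U_1\cdot V_1,\dots,U_k\cdot V_k)$, the degree-one differential becomes
\begin{equation*}
(\delta\c_\vartheta)(\mathsf{g}_1,\mathsf{g}_2) = \<\psi_2,\vartheta(V)\> + \<\psi_1,\vartheta(U)\> - \<\psi_1\bullet\psi_2,\vartheta(U\cdot V)\>.
\end{equation*}

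For the forward implication I would assume $\vartheta$ multiplicative. Being a groupoid morphism, it sends the composable pair $(U,V)$ to a composable pair in $\V$ with $\vartheta(U\cdot V) = \vartheta(U)\bullet\vartheta(V)$; substituting into the defining relation \eqref{dual_mult} of the dual multiplication yields $\<\psi_1\bullet\psi_2,\vartheta(U\cdot V)\> = \<\psi_1,\vartheta(U)\> + \<\psi_2,\vartheta(V)\>$, so all three terms of $\delta\c_\vartheta$ cancel and $\c_\vartheta$ is a $1$-cocycle.

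For the converse, which I expect to be the delicate part, I would start from $\delta\c_\vartheta = 0$, i.e.
\begin{equation*}
\<\psi_1\bullet\psi_2,\vartheta(U\cdot V)\> = \<\psi_1,\vartheta(U)\> + \<\psi_2,\vartheta(V)\>,\tag{$\star$}
\end{equation*}
valid for every composable configuration, and try to recover that $\vartheta$ is a groupoid morphism. The target output is $\vartheta(U\cdot V) = \vartheta(U)\bullet\vartheta(V)$, but to make sense of the right-hand side I must first know that $\vartheta(U)$ and $\vartheta(V)$ are composable in $\V$, that is, the source/target compatibilities $\widetilde{\sour}(\vartheta(U)) = \theta(T\sour(U_1),\dots,T\sour(U_k))$ and $\widetilde{\tar}(\vartheta(U)) = \theta(T\tar(U_1),\dots,T\tar(U_k))$ with $\theta = \vartheta|_M$. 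I would extract these by specializing $(\star)$ to a unit in one slot and to test covectors $\psi$ lying in the core $E^* \hookrightarrow \V^*$, using the source/target descriptions $\<\widetilde{\sour}(\psi),c(\sour g)\> = \<\psi,c_L(g)\>$ and $\<\widetilde{\tar}(\psi),c(\tar g)\> = \<\psi,c_R(g)\>$; the same specialization forces $\theta = \vartheta|_M$ to take values in $E$.

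Once composability holds, relation \eqref{dual_mult} rewrites the right-hand side of $(\star)$ as $\<\psi_1\bullet\psi_2,\vartheta(U)\bullet\vartheta(V)\>$, so that $\<\psi_1\bullet\psi_2,\vartheta(U\cdot V) - \vartheta(U)\bullet\vartheta(V)\> = 0$. To finish I would use that the multiplication of $\V^*$ is fibrewise onto — given $\psi \in \V^*_{g_1g_2}$ one picks $\psi_2$ with $\widetilde{\sour}(\psi_2) = \widetilde{\sour}(\psi)$ and sets $\psi_1 = \psi\bullet\psi_2^{-1}$ — so that $\psi_1\bullet\psi_2$ exhausts $\V^*_{g_1g_2}$, together with the non-degeneracy of the pairing $\V^*_{g_1g_2}\times\V_{g_1g_2}\to\R$; these give $\vartheta(U\cdot V) = \vartheta(U)\bullet\vartheta(V)$ and hence multiplicativity of $\vartheta$. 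The main obstacle is precisely this converse: deriving source/target compatibility directly from $(\star)$ before one is entitled to multiply $\vartheta(U)$ and $\vartheta(V)$ in $\V$, which is where the fine structure of the dual $\VB$-groupoid $\V^*\toto C^*$ (its core $E^*$, core anchor $\partial^*$, and the formulas for $\widetilde{\sour},\widetilde{\tar}$ through $c_L,c_R$) must be used.
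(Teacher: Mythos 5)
Your proposal is correct and follows essentially the same route as the paper: the forward direction is immediate from the defining relation \eqref{dual_mult} of the dual multiplication, and the converse proceeds exactly as you outline — first using the cocycle condition on units to see that $\vartheta|_M$ lands in $E$, then pairing against core covectors $\varphi \in E^*$ (the paper uses $\varphi - \partial^*(\varphi)$ together with the zero covector $0_g$ to build a composable configuration) to get the source/target compatibility, and finally combining the surjectivity of $\bullet$ on $\V^*$-fibres with the non-degeneracy of the pairing to obtain $\vartheta(U\cdot V)=\vartheta(U)\bullet\vartheta(V)$. You have correctly identified the delicate step, and the details you leave schematic are carried out in the paper exactly along the lines you indicate.
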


\begin{proof}
 It is straightforward to see that $\vartheta$ multiplicative implies that $\c_{\vartheta}$ is a cocycle. Conversely, let us assume that $\c_\vartheta$ is a cocycle. First, note that $\c_\vartheta|_\bM = 0$ implies that
  $
  \vartheta|_M \in \Omega^k(M, E)
  $
  under the decomposition $\V|_M = E\oplus C$. So, define $\theta \in \Omega^k(M, E)$ as $\theta:=\vartheta|_M$. We claim that $\widetilde{\sour} \circ \vartheta = \sour^*\theta$ and $\widetilde{\tar}\circ \vartheta = \tar^*\theta$. Indeed, for $\varphi \in E_{\sour(g)}^*$ and $U_1, \dots, U_k \in T_{g} \G$, it follows from the multiplicativity of both $\c_\vartheta$ and the pairing $\<\cdot, \cdot\>$ that
  \begin{align*}
   \< \widetilde{\sour}(\vartheta(U_1, \dots, U_k)), \varphi\> & = \< \widetilde{\sour}(\vartheta(U_1, \dots, U_k)), \varphi - \partial^*(\varphi)\>\\
   & = \<\vartheta(U_1, \dots, U_k)\bullet \widetilde{\sour}(\vartheta(U_1, \dots, U_k)), 0_{g_1} \bullet (\varphi - \partial^*(\varphi)) \>\\
   & = \c_{\vartheta}(U_1, \dots, U_k, 0_{g_1} \bullet (\varphi - \partial^*(\varphi)))\\
  & = \c_\vartheta(T\sour(U_1), \dots, T\sour(U_k), \varphi - \partial^*(\varphi))\\
  & = \<\theta(T\sour(U_1), \dots, T\sour(U_k)), \varphi\>.
  \end{align*}
A similar argument proves that $\widetilde{\tar}\circ \vartheta = \tar^*\theta$. Finally, to prove compatibility with the multiplication, it follows directly from the cocycle equation for $\c_\vartheta$ that
\begin{align*}
 \<\vartheta(U_1 \bullet V_1, \dots, U_k \bullet V_k), \psi_1 \bullet \psi_2\> = \< \vartheta(U_1, \dots, U_k) \bullet \vartheta(V_1, \dots, V_k), \psi_1 \bullet \psi_k\>,
\end{align*}
for any $(V_1, \dots, V_k) \in \times^k T_{g_2}\G$ composable with $(U_1, \dots, U_k)$ and $\psi_1 \in \V^*_{g_1}, \psi_2 \in \V^*_{g_2}$ a composable pair. The result now follows from the fact that any $\psi \in \V^*_{g_1g_2}$ can be written as a produt $\psi = \psi_1 \bullet \psi_2$.
\end{proof}

The Lie algebroid of $\bG$ is $\bA \to \bM$ 
$$
\bA = \times_A^k TA \times_A \v^*_A,
$$
with Lie algebroid structure determined componentwise by $TA \to TM$ and $\v_A^* \to C^*$. Note that we are using identifications \eqref{dual_identification} and $TA \cong \mathrm{Lie}(T\G)$ (see \cite{Mckz-Xu} for a discussion of tangent groupoids and tangent algebroids). 

It follows from the general theory of double vector bundles that $\Gamma(\bM, \bA)$ is generated as a $C^\infty(\bM)$-module (see \cite[Proposition~2.2]{Mac-doubles}) by $\chi_\eta, \B \beta_{(j)}, \B \varphi: \bM \to \bA$ defined as follows:
\begin{align}\label{gen_set}
\begin{split}
\chi_\eta(X_1,\dots, X_k, \mu) & = (T\alpha(X_1), \dots, T\alpha(X_k), \eta^\top(\mu)),\\
\B \beta_{(j)}(X_1, \dots, X_k, \mu) & = (T0(X_1), \dots, B\beta(X_j), \dots, T0(X_k), 0_\mu),\\
\B\varphi(X_1, \dots, X_k,\mu) & = (T0(X_1), \dots, T0(X_k), \B\varphi(\mu)),
\end{split}
\end{align}
where $\eta \in \Gamma_{lin}(E,\v)$, $\alpha = \pr(\eta), \,\beta \in \Gamma(A)$ and $\varphi \in \Gamma(E^*)$. Using \eqref{eq:dual_vb}, one can see that the Lie algebroid structure of $\bA \to \bM$ is given in terms of the generators as follows (see Corollary \eqref{cor:action} for notation):
\begin{equation}\label{big_algebroid}
\begin{aligned}
 \noindent [\chi_{\eta_1}, \chi_{\eta_2}]  = \chi_{[\eta_1, \eta_2]}, & \,\,\,\,\,\, [\chi_\eta, \B \beta_{(j)}] = \B [\alpha, \beta]_{(j)}\\
  [\chi_\eta, \B \varphi]  = \B(\nabla_\eta^\top \varphi), & \,\,\,\,\,\,\,[\B(\ast), \B(\ast')] = 0\\
 \rho_\bA(\chi_\eta)  = (\rho(\alpha)^{T,k}, W_{\nabla_\eta}) & ,  \,\,\ \rho_\bA(\B\beta_{(j)}) = (\rho(\alpha)^{\uparrow, k}_{(j)},0), \,\,\, \rho_\bA(\B\varphi) = (0, \partial^*(\varphi)^\uparrow).
 \end{aligned}
\end{equation}

% First consider the map $\mathcal{S}, \mathcal{T}: \Omega^k(M, C) \to \Omega^k(\G, \V)$ given as
% $$
% \mathcal{S}(\xi \otimes c) = \sour^*\xi \otimes c_L, \,\,\, \mathcal{T}(\xi \otimes c) = \tar^*\xi \otimes c_R 
% $$
% \begin{lemma}
%  For $\tau \in \Omega^k(M, C)$, consider its componentwise linear function $\c_\tau \in C^\infty(\bM)$. One has that
%  $$
%  \mathbbm{s}^*\c_\tau = \c_{\mathcal{S}(\tau)}, \,\,\, \mathbbm{t}^*\c_\tau = \c_{\mathcal{T}(\tau)}
%  $$
% \end{lemma}

% We follow closely here \cite{Bur-Drum} to study the infinitesimal components of $\vartheta \in \Omega^k(\G, \V)$. We choose to present the result more succintly as one can find the relevant details there (see Lemmas 4.6 and 4.7 in \cite{Bur-Drum}).

\begin{proposition}\label{prop:inf_comp}
Let $\vartheta \in \Omega^k(\G, \V)$ be a multiplicative $k$-form. There exists $D: \Gamma_{lin}(E, \v) \to \Omega^k(M, C)$, $l: A \to \wedge^{k-1}T^*M \otimes C$ and $\theta \in \Omega^k(M,E)$ such that
$$
D(\mathcal{B}\Phi) = -\Phi \circ \theta, \,\,\,\, D(f\eta) = fD(\eta) + df \wedge l(\pr(\eta)), 
$$
for $\Phi \in \mathrm{Hom}(E,C)$, $f \in C^\infty(M)$, satisfying
\begin{align*}
D(\eta) & = L_{\Delta_\eta}(\vartheta)|_M\\
l(\alpha) & = i_{\overrightarrow{a}} \vartheta|_M\\
\theta & = \vartheta|_M.
\end{align*}
\end{proposition}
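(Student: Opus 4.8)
The plan is to define $\theta$, $l$ and $D$ by the three displayed formulas and to verify, in turn, that each lands in the asserted space and then that the two identities in \eqref{eq:compatibility} hold; the IM equations are left for later. The only global inputs are supplied by the preceding Proposition, by which multiplicativity of $\vartheta$ is equivalent to $\c_\vartheta$ being a cocycle on $\bG$: its proof already yields $\theta := \vartheta|_M \in \Omega^k(M,E)$ together with the intertwining relations $\widetilde{\sour}\circ\vartheta = \sour^*\theta$ and $\widetilde{\tar}\circ\vartheta = \tar^*\theta$. Everything below is extracted from these relations plus the source/target verticality of the relevant right-invariant vector fields.

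First I would settle well-definedness of $l(\alpha)=i_{\overrightarrow{\alpha}}\vartheta|_M$. Since $\overrightarrow{f\alpha}=(f\circ\tar)\overrightarrow{\alpha}$ and $\overrightarrow{\alpha}|_M=\alpha$ pointwise, we get $l(f\alpha)=f\,l(\alpha)$, so $l$ is a genuine bundle map $A\to\wedge^{k-1}T^*M\otimes C$ once its values are seen to lie in $C$. For the latter I apply $\widetilde{\sour}$: by $\widetilde{\sour}\circ\vartheta=\sour^*\theta$ and the fact that right-invariant vector fields on $\G$ are $\sour$-vertical (so $T\sour(\overrightarrow{\alpha})=0$), the $E$-component $\widetilde{\sour}(i_{\overrightarrow{\alpha}}\vartheta|_M)=\theta(T\sour(\overrightarrow{\alpha}),\dots)$ vanishes. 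The same mechanism handles $D(\eta)=L_{\Delta_\eta}(\vartheta)|_M$: here $\overrightarrow{\eta}$ is right-invariant in the groupoid $\V\toto E$ (Lemma \ref{lemma:right_linear}), hence $\widetilde{\sour}$-vertical, $T\widetilde{\sour}(\overrightarrow{\eta})=0$; expressing $L_{\Delta_\eta}$ through the flow of $\overrightarrow{\eta}$, which covers $\Fl^\epsilon_{\overrightarrow{\alpha}}$ on $\G$ and satisfies $\sour\circ\Fl^\epsilon_{\overrightarrow{\alpha}}=\sour$, and invoking once more $\widetilde{\sour}\circ\vartheta=\sour^*\theta$, one finds $\widetilde{\sour}(D(\eta))=0$, so $D(\eta)\in\Omega^k(M,C)$.

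Next come the two identities in \eqref{eq:compatibility}. For the Leibniz rule I would use $\overrightarrow{f\eta}=(f\circ\tar)\overrightarrow{\eta}$, which gives $\Delta_{f\eta}=(f\circ\tar)\Delta_\eta$ as a derivation of $\Gamma(\V)$ with symbol $(f\circ\tar)\overrightarrow{\alpha}$. The identity $[(f\circ\tar)\overrightarrow{\alpha},U]=(f\circ\tar)[\overrightarrow{\alpha},U]-U(f\circ\tar)\,\overrightarrow{\alpha}$ then propagates through the definition of $L$ to produce a correction term that, after restricting to $M$ (where $\tar|_M=\mathrm{id}$ and $\overrightarrow{\alpha}|_M=\alpha$), is precisely $df\wedge l(\pr(\eta))$; this yields $D(f\eta)=fD(\eta)+df\wedge l(\pr(\eta))$. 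For the core identity, Remark \ref{Bphi} gives $\overrightarrow{\B\Phi}=W_{\Phi_R}$, a vertical (zero-symbol) linear vector field, so $L_{\Delta_{\B\Phi}}\vartheta$ is the pointwise post-composition of $\vartheta$ with the associated zero-symbol derivation, which equals $-\Phi_R$ by the adjoint sign convention (Remark \ref{der_linear}). Since units satisfy $\widetilde{\tar}(e)=e$, one computes $\Phi_R|_E=\Phi$, and as $\vartheta|_M=\theta$ is $E$-valued, restriction to $M$ produces $D(\B\Phi)=-\Phi\circ\theta$.

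The main obstacle is bookkeeping rather than conceptual: one must carry the minus sign in $D(\B\Phi)=-\Phi\circ\theta$ correctly through the adjoint convention of Remark \ref{der_linear}, and be scrupulous in justifying the vanishing of the $E$-components, namely that $\overrightarrow{\alpha}$ is $\sour$-vertical on $\G$ whereas $\overrightarrow{\eta}$ is $\widetilde{\sour}$-vertical on the groupoid $\V\toto E$. An alternative fully in the spirit of the strategy table is to read $D,l,\theta$ off the van Est infinitesimal cochain $\sigma\mapsto \Lie_{\overrightarrow{\sigma}}\c_\vartheta|_{\bM}$ evaluated on the generators $\chi_\eta,\B\beta_{(j)},\B\varphi$ of $\bA$ via Corollary \ref{cor:action}, deducing \eqref{eq:compatibility} from the $C^\infty(\bM)$-module relations among these generators; this is conceptually tidier but demands one further translation lemma.
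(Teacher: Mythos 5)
Your proposal is correct, but it proves the proposition by a genuinely more direct route than the paper. The paper never argues directly with $\vartheta$: it passes to the componentwise linear function $\c_\vartheta$ on the big groupoid $\bG=\times_\G^k T\G\times_\G\V^*$, takes the Lie algebroid cocycle $\mathrm{Lie}(\c_\vartheta)\in\Gamma(\bA^*)$, and evaluates it on the generators $\chi_\eta$, $\B\beta_{(j)}$, $\B\varphi$ of $\Gamma(\bA)$ via $\Lie_{\overrightarrow{\chi}}\c_\vartheta|_{\bM}$ and Corollary \ref{cor:action}; the fact that $D(\eta)$ and $l(\alpha)$ are $C$-valued then comes for free, because the restriction to the units $\bM$ only pairs against $C^*=\mathrm{Ann}(E)\subset\V^*|_M$, and the two identities in \eqref{eq:compatibility} are read off from $L_{\Delta_{\B\Phi}}\vartheta=-\Phi_R\circ\vartheta$ and $L_{\Delta_{f\eta}}\vartheta=(\tar^*f)L_{\Delta_\eta}\vartheta+(\tar^*df)\wedge i_{\overrightarrow{\alpha}}\vartheta$, exactly as in your second paragraph. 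Your substitute for the ``free'' $C$-valuedness --- namely $\widetilde{\sour}\circ\vartheta=\sour^*\theta$ combined with $T\sour(\overrightarrow{\alpha})=0$ on $\G$ and $T\widetilde{\sour}(\overrightarrow{\eta})=0$ on $\V$, so that $\widetilde{\sour}$ of the flow curve defining $L_{\Delta_\eta}\vartheta|_M$ is constant and its derivative vanishes --- is sound and arguably more transparent, and your sign bookkeeping for $D(\B\Phi)=-\Phi\circ\theta$ via Remarks \ref{Bphi} and \ref{der_linear} agrees with the paper's. What your route gives up is that the paper's computation is not only a proof of this proposition but also the setup for everything that follows: the identities $\c_{D(\eta)}=\<\mathrm{Lie}(\c_\vartheta),\chi_\eta\>$, $\c_{l_j(\beta)}\circ\gamma_{(j)}=\<\mathrm{Lie}(\c_\vartheta),\B\beta_{(j)}\>$, $\c_{r(\varphi)}\circ\gamma_{(k+1)}=\<\mathrm{Lie}(\c_\vartheta),\B\varphi\>$ are precisely what is later compared with \eqref{linear_comp} to identify $\mathrm{Lie}(\c_\vartheta)$ with $\Upsilon$ and hence to derive the IM equations and run the integration step; with your argument one would still have to establish that translation separately, as you acknowledge at the end.
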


\begin{proof}
Consider the multiplicative function $\c_\vartheta \in C^\infty(\bG)$ and the corresponding Lie algebroid cocycle $\mathrm{Lie}(\c_\vartheta) \in \Gamma(\bA^*)$. By evaluating $\mathrm{Lie}(\c_\vartheta)$ on $\chi \in \Gamma(\bA)$,
\begin{equation}\label{eq:inf_global}
\<\mathrm{Lie}(\c_\vartheta), \chi\> = \Lie_{\overrightarrow{\chi}}\c_\vartheta|_\bM,
\end{equation}
with $\chi$ varying in the set of generators \eqref{gen_set}, one obtains maps from the space of parameters $\Gamma_{lin}(E,\v)$, $\Gamma(A)$ and $\Gamma(E^*)$ to $C^\infty(\bM)$. We will show that $D,l,\theta$ will appear as refinements of these maps. First note that it follows from \cite[Eq.~(45), \S 9.7]{Mckz2}, Propositions \ref{right_core} and \ref{right_adjoint} that 
\begin{align*}
 \overrightarrow{\chi_\eta} & = (\overrightarrow{\alpha}^{T,k}, \overrightarrow{\eta}^\top), \,\,\,\, \overrightarrow{\B\beta_{(j)}}  = (\overrightarrow{\beta}^{\uparrow,k}_{(j)},0), \,\,\,\,\,\,
 \overrightarrow{\B \varphi}  = (0, \varphi_R^\uparrow).
\end{align*}
From Corollary \eqref{cor:action}, 
\begin{align*}
 \Lie_{\overrightarrow{\chi_\eta}}\c_\vartheta = \c_{L_{\Delta_\eta}\vartheta}, \,\,\, \Lie_{\overrightarrow{\B\beta_{(j)}}}\c_\vartheta = (-1)^{j-1} \c_{i_{\beta} \vartheta} \circ \pi_{(j)}, \,\,\,\,
 \Lie_{\overrightarrow{\B \varphi}} \c_\vartheta = \c_{\<\varphi_R, \vartheta\>}\circ \pi_{(k+1)},
 \end{align*}
where $\pi_{(j)}: \times_\G^k T\G \times_\G \V^* \to \times_\G^{k-1} T\G \times_\G \V^*$ forgets the $j$-th component, $j=1,\dots, k$, and $\pi_{(k+1)}: \times_\G^k T\G \times_\G \V^* \to \times_\G^k T\G$ forgets the component on $\V^*$. So, by restricting to the units $\bM \subset \bG$ and using \eqref{eq:inf_global}, one notes that there exists $D: \Gamma_{lin}(E, \v) \to \Omega^k(M, C)$, $l_j: \Gamma(A) \to \Omega^{k-1}(M,C), \, j=1,\dots, k,$ and $r:\Gamma(E^*) \to \Omega^k(M)$ satisfying 
\begin{align}\label{eq:liner_inf}
\begin{split}
\c_{D(\eta)}  & =\<\mathrm{Lie}(\c_\vartheta), \chi_\eta\> = \c_{L_{\Delta_\eta} \vartheta} |_\bM  \\
\c_{l_j(\beta)} \circ \gamma_{(j)} & = \<\mathrm{Lie}(\c_{\vartheta}), \B \beta_{(j)}\> = (-1)^{j-1}\c_{i_{\beta} \vartheta} \circ \pi_{(j)}|_\bM\\
\c_{r(\varphi)} \circ \gamma_{(k+1)}  & = \<\mathrm{Lie}(\c_{\vartheta}), \B \varphi\>  = \c_{\<\varphi_R, \vartheta\>} \circ \pi_{(k+1)}|_\bM
\end{split}
\end{align}
Since \eqref{F_def} is injective, it follows that $ D(\eta)=L_{\Delta_\eta}(\vartheta)|_M,  \,\,\,  l_j(\beta) = (-1)^{j-1}i_\beta \vartheta |_M, \,\,\, r(\varphi) = \<\varphi_R, \vartheta\>|_M$. Note that $l_j$ and $r$ are $C^\infty(M)$-linear. Also, if we define $l=l_1$, then $l_j=(-1)^{j-1}l$ \footnote{The skew-symmetry of $\vartheta$ gives the relationship between the $l_j$'s. In general, for a tensor $\tau \in \Gamma(\otimes^k T^*\G \otimes \V)$, one gets $k$ non-related maps $l_j: A \to \otimes^{k-1} T^*\G \otimes \V$ by contracting $\tau$ with $\overrightarrow{\beta}$ on the $j$-th factor.}\,. Also,  
\begin{align*}
 \<\varphi_R(g),\vartheta(U_1, \dots, U_k)\> & = \<\varphi(\tar(g))\bullet 0_g, \vartheta(T\tar(U_1)\bullet U_1, \dots, T\tar(U_k) \bullet U_k)\>\\
 & = \< \varphi(\tar(g)), \,\theta(T\tar(U_1),\dots, T\tar(U_k)\> 
 \end{align*}
 In other words, $\<\varphi_R,\vartheta\> = \tar^*\<\varphi, \theta\>$. So, if one sees $r$ as an element of $\Omega^k(M, E)$, one has that $r=\theta$. The properties of $D$ follows from the following facts:
 $$
 L_{\Delta_{\B \Phi}}(\vartheta) = - \Phi \circ \vartheta, \,\, L_{\Delta_{f\eta}} \vartheta = (\tar^*f) L_{\Delta_{\eta}}(\vartheta) + (\tar^*df)\wedge i_{\overrightarrow{\alpha}}\vartheta.
 $$
 The minus sign on $L_{\Delta_{\B \Phi}}$ comes from the relationship between $\Delta_{\B \Phi}$ and $\Delta_{\B \Phi}^\top$ (see Remark \ref{der_linear}).
\end{proof}

\subsubsection{From infinitesimal to global}
Given a triple $(D, l, \theta)$ satisfying the compatibilities \eqref{eq:compatibility}, we construct a function 
$
\Upsilon: \times_A^k TA \times_A \v_A^* \to \R
$
as follows:
% \begin{align*}
% \tau(T\alpha(X_1), \dots, T\alpha(X_k)) &= \eta(\theta(X_1,\dots, X_k)) + (0_{\theta(X_1,\dots, X_k)} +_A \overline{D(\eta)(X_1, \dots, X_k)})\\
% \tau(\B\alpha_{(j)}(X_1, \dots, X_k)) & = 0_{\theta(X_1,\dots, X_k)} +_A (-1)^{j-1}\overline{l(\alpha)(X_1, \dots, X_{j-1},X_{j+1}, \dots,  X_k)})
% \end{align*}
% where $\eta \in \Gamma_{lin}(E, \v)$ is any section such that $\pr(\eta) = \alpha \in \Gamma(A)$. 
% \begin{itemize}
%  \item It does not depend on the choice of $\eta$ \comment{why?}. 
%  \item It is a multiplicative differential form if one thinks of $A \to M$ as a Lie groupoid and $\v$ as a $\VB$-groupoid over $A$.
 for 
 $$
 \xy
{\ar@{|->}_{}(0,25)*++{\chi_j \in TA};(22,25)*++{a \in A}}\\
{\ar@{|->}_{}(0,25)*++{};(0,15)*++{}}\\
{\ar@{|->}_{}(0,15)*++{X_j \in TM};(22,15)*++{x \in M}}\\
{\ar@{|->}_{}(22,25)*++{};(22,15)*++{}}\\
\endxy
\,\,\,\,\,\,\,\text{ and } \,\,\,\,\,\,\,\,
 \xy
{\ar@{|->}_{}(0,25)*++{\phi \in \v_A^*};(20,25)*++{a \in A}}\\
{\ar@{|->}_{}(0,25)*++{};(0,15)*++{}}\\
{\ar@{|->}_{}(0,15)*++{\mu \in C^*};(20,15)*++{x \in M,}}\\
{\ar@{|->}_{}(20,25)*++{};(20,15)*++{}}\\
\endxy
\vspace{25pt}
$$
$j=1, \dots, k$, choose $\zeta \in \Gamma_{lin}(E,\v)$ such that $\zeta^\top(\mu)$ projects over $a$ under the projection $\v_A^* \to A$ and
let $\xi \in E_x^*$ and $a_j \in A_x$ be elements determined by
$$
\phi = \zeta^\top(\mu) +_{C^*} (0_\mu +_A \overline{\xi}), \,\,\,\,\chi_j = T\alpha(X_j) +_{TM} (T0(X_j) +_A \overline{a_j}), \,\,\, \alpha = \pr(\zeta).
$$
Define
\begin{align}\label{Flinear_def} 
\begin{split}
\Upsilon(\chi_1, \dots, \chi_k, \phi) & = \<\mu, D(\zeta)(X_1, \dots, X_k)\>\\
 & \hspace{-50pt}+  \<\mu,\sum_{j=1}^k (-1)^{j-1} l(a_j)(X_1, \dots, \widehat{X_{j}}, \dots, X_k)\> + \<\xi, \theta(X_1,\dots, X_k)\>.
\end{split}
\end{align}

Note that $\Upsilon$ is skew-symmetric with respect to the $TA$ components.
\begin{proposition}
 $\Upsilon$ is a well-defined componentwise linear function (i.e. there exists $\tau \in \Gamma(\wedge^k T^*A \otimes \v)$ such that $\Upsilon = \c_\tau$). Moreover, $\Upsilon$ is fiberwise linear with respect to vector bundle structure $\bA \to \bM$. As an element of $\Gamma(\bA^*)$, one has that
 \begin{equation}
 \begin{aligned}\label{linear_comp}
 \<\Upsilon, \chi_\eta\> & = \c_{D(\eta)}  , \,\, \<\Upsilon, \B\beta_{(j)}\> = (-1)^{j-1}\c_{l(\beta)}\circ \gamma_{(j)} \\ \<\Upsilon, \B\varphi\> & = \c_{\<\varphi, \theta\>} \circ \gamma_{(k+1)}
 \end{aligned}
 \end{equation}
 as functions on $\bM$.
\end{proposition}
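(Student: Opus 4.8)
The plan is to take the number prescribed by \eqref{Flinear_def} for fixed arguments $\chi_1,\dots,\chi_k$ and $\phi$, and first verify that it is independent of the auxiliary linear section $\zeta\in\Gamma_{lin}(E,\v)$ used to perform the decompositions; once this is secured, componentwise linearity and the pairing formulas \eqref{linear_comp} follow comparatively routinely. Such a $\zeta$ always exists: since $\pr\colon\F(\v)\to A$ is a surjective bundle map, one may pick $\alpha\in\Gamma(A)$ with $\alpha(x)=a$ and lift it, and then $\zeta^\top(\mu)$ automatically projects to $a=\alpha(x)$ under $\v_A^*\to A$. I expect the independence of the choice of $\zeta$ to be the crux of the argument, and it is exactly here that the compatibility conditions \eqref{eq:compatibility} enter; note that the IM-equations play no role at this stage.

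For well-definedness I would compare two admissible choices $\zeta,\zeta'$ with $\pr(\zeta)(x)=\pr(\zeta')(x)=a$ and decompose their difference into two elementary moves. First, suppose $\zeta'=\zeta+\B\Phi$ for a bundle morphism $\Phi\colon E\to C$ (see Remark \ref{Bphi}), so the covered section $\alpha$ is unchanged and hence the core data $a_j$ extracted from the $\chi_j$ are unchanged. Then $D(\zeta')=D(\zeta)-\Phi\circ\theta$ by the first identity in \eqref{eq:compatibility}, while the decomposition of $\phi$ replaces $\xi$ by $\xi'=\xi+\Phi^{*}\mu$, the shift $(\B\Phi)^\top$ being the core section of $\v_A^*\to C^*$ determined through \eqref{eq:dual} (with the sign fixed by that convention). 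Substituting into \eqref{Flinear_def}, the two extra terms are $-\<\mu,\Phi(\theta(X))\>+\<\Phi^{*}\mu,\theta(X)\>$, which cancel by definition of the transpose. Second, suppose $\zeta'=\zeta+\sigma$ with $\sigma$ covering $\beta=\pr(\zeta')-\pr(\zeta)$ and $\beta(x)=0$; writing locally $\beta=\sum_i f_i\beta_i$ with $f_i(x)=0$ and absorbing a further core section into the first move, we may take $\sigma=\sum_i f_i\sigma_i$. Since $f_i(x)=0$, the Leibniz rule in \eqref{eq:compatibility} gives at $x$ the identity $D(\zeta')(X)=D(\zeta)(X)+\sum_{i,j}(-1)^{j-1}X_j(f_i)\,l(\beta_i(x))(X_1,\dots,\widehat{X_j},\dots,X_k)$, while changing the reference section from $\alpha$ to $\alpha'$ shifts each $a_j$ by $-\sum_i X_j(f_i)\,\beta_i(x)$ and leaves $\xi$ unchanged (as the transpose is $C^\infty(M)$-linear and $f_i(x)=0$). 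After pairing with $\mu$, the contribution from the variation of $D(\zeta)$ is exactly cancelled by that from the variation of the $l(a_j)$-terms, so $\Upsilon$ is unchanged. This bookkeeping of how $\xi$ and the $a_j$ respond to a change of $\zeta$, matched against \eqref{eq:compatibility}, is the main obstacle.

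Granting well-definedness, the existence of $\tau\in\Gamma(\wedge^kT^*A\otimes\v)$ with $\Upsilon=\c_\tau$ I would establish by scaling each argument over the base $A$. The point is that scaling $\chi_j$ in the $TA\to A$ structure, or $\phi$ in the $\v_A^*\to A$ structure, fixes the base point $a$, so the same $\zeta$ stays admissible; inspecting \eqref{Flinear_def} one sees that such a scaling of $\chi_j$ multiplies both $X_j$ and $a_j$ by $\lambda$, and a scaling of $\phi$ multiplies $\mu$ and $\xi$ by $\lambda$, whence each of the three terms is homogeneous of degree one in every slot. Additivity is checked in the same way, now invoking well-definedness to compare decompositions relative to a common $\zeta$. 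Since skew-symmetry in the $TA$-slots is built into the formula, $\Upsilon$ is a componentwise linear function in the sense of \eqref{F_def}, i.e.\ $\Upsilon=\c_\tau$.

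Finally, for fiberwise linearity over $\bA\to\bM$ together with the identities \eqref{linear_comp}, I would evaluate $\Upsilon$ on the generating sections \eqref{gen_set}, choosing an adapted $\zeta$ in each case. For $\chi_\eta$ take $\zeta=\eta$: the decompositions of $\eta^\top(\mu)$ and of $T\alpha(X_j)$ are then trivial, so $\xi=0$ and all $a_j=0$, giving $\<\Upsilon,\chi_\eta\>=\<\mu,D(\eta)(X)\>=\c_{D(\eta)}$. For $\B\beta_{(j)}$ and $\B\varphi$ take $\zeta=0$ (legitimate, since the $A$-component is zero): in the first case only $a_j=\beta(x)$ survives, yielding $(-1)^{j-1}\c_{l(\beta)}\circ\gamma_{(j)}$, and in the second only $\xi=\varphi(x)$ survives, yielding $\c_{\<\varphi,\theta\>}\circ\gamma_{(k+1)}$. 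Because the sections \eqref{gen_set} generate $\Gamma(\bA)$ as a $C^\infty(\bM)$-module and these pairings are $C^\infty(\bM)$-linear functions on $\bM$, it follows that $\Upsilon$ is linear along the fibers of $\bA\to\bM$ and coincides, as an element of $\Gamma(\bA^*)$, with the section described by \eqref{linear_comp}.
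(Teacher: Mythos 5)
Your argument is essentially correct but follows a genuinely different route from the paper's. The paper proves well-definedness, componentwise linearity and fiberwise linearity all at once by expanding everything in local frames $\{\alpha_r\}$ of $A$ and $\{\varphi_s\}$ of $E^*$, writing $\chi_j$ and $\phi$ in the induced frames $\{T\alpha_r, \B\alpha_r\}$ and $\{\eta_r^\top,\B\varphi_s\}$ with coefficients $(t^r,h_j^r,g^s)$, and reading off from the Leibniz rule that the resulting local expression depends only on these coefficients and is linear in them. You instead argue invariantly, reducing the ambiguity in $\zeta$ to two elementary moves --- adding $\B\Phi$ and adding $\sum_i f_i\sigma_i$ with $f_i(x)=0$ --- and matching the resulting shifts of $D(\zeta)$, $\xi$ and the $a_j$ against \eqref{eq:compatibility}. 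This buys something real: your first move makes explicit that the identity $D(\B\Phi)=-\Phi\circ\theta$ is exactly what kills the $\Hom(E,C)$-ambiguity in $\zeta$ (via the compensating shift $\xi\mapsto\xi+\Phi^*\mu$), a point the paper's proof obscures --- its assertion that $\zeta^\top(\mu)$ projecting onto $a$ forces $\zeta^\top=(f^r\circ p_*)\cdot\eta_r^\top$ ignores precisely this core ambiguity, and only the Leibniz rule is invoked there. Your identification of the generators via adapted choices of $\zeta$ ($\zeta=\eta$, resp.\ $\zeta=0$) reproduces the paper's closing case analysis.

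One step does not hold as written: in the last paragraph you infer that $\Upsilon$ is fiberwise linear on $\bA\to\bM$ from the facts that \eqref{gen_set} generates $\Gamma(\bM,\bA)$ as a $C^\infty(\bM)$-module and that the pairings with generators are the functions \eqref{linear_comp}. Knowing a function on the images of a generating family of sections does not imply linearity on the fibers (nor does componentwise linearity over $A$, which concerns a different fiber structure), so this is circular: you need fiberwise linearity over $\bM$ before the values on generators determine $\Upsilon$ as an element of $\Gamma(\bA^*)$. The fix is immediate with the technique you already used for the $A$-direction: scaling $(\chi_1,\dots,\chi_k,\phi)$ by $\lambda$ in $\bA\to\bM$ replaces the data $(a,a_1,\dots,a_k,\xi)$ by $(\lambda a,\lambda a_1,\dots,\lambda a_k,\lambda\xi)$ with $\lambda\zeta$ an admissible auxiliary section, and $D(\lambda\zeta)=\lambda D(\zeta)$, so \eqref{Flinear_def} is homogeneous of degree one (and additive, by the same bookkeeping together with well-definedness); only then do the computed values on the generators pin down $\Upsilon$ as the section of $\bA^*$ described by \eqref{linear_comp}.
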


\begin{proof}
 Let $\{\alpha_r\}_{r=1, \dots, \mathrm{rank}(A)}$, $\{\varphi_s\}_{s=1, \dots, \mathrm{rank}(E)}$ be local frames of $A$ and $E^*$, respectively. If $\eta_r \in \Gamma_{lin}(E, \v)$ are local sections such that $\pr(\eta_r) = \alpha_r$, then $\{\eta_r^\top, \B \varphi_s\}$ is a local frame for $\v_A^* \to C^*$ (see \cite[Prop.~2.2]{Mac-doubles}). Write
 $$
 \chi_j = t^r\cdot T\alpha_r(X_j) +_{TM} h_{j}^{r}\cdot \B \alpha_r(X_j), \,\,\, \phi = t^r\cdot \eta^\top_r(\mu) +_{C^*} g^s \cdot \B\varphi_s,
 $$
 for $t^r, h_j^r, g^s \in \R$ and $a = t^r \alpha_r(x)$ (here $\cdot$ stands for the multiplication by scalars on $TA \to TM$ and $\v_A^* \to C^*$ respectively). Now, for any linear section $\zeta \in \Gamma_{lin}(E, \v)$ 
 $$
 p_*: \zeta^\top(\mu) \mapsto a \Leftrightarrow \zeta^\top = (f^r \circ p_*)   \cdot \eta_r^\top, \,\,\,\text{ for } f^r \in C_{loc}^\infty(M) \text{ such that } f^r(x) = t^r,
 $$
 where $p_*: \v_A^* \to A$ is the vector bundle projection. So, $\xi = g^s \varphi_s(x)$ and, using that
 $$
 T(f^r \alpha_r)(X_j) = f^r(x) \cdot T\alpha_r(X_j) +_{TM} (\Lie_{X_j}f)(x)\cdot \B\alpha_r(X_j),
 $$
 one gets that
 $$
 a_j = (h_j^r - (\Lie_{X_j}f^r)(x)) \alpha_r(x).
 $$
 Using the Leibniz rule for $D$, one finally obtains the following local expression for $\Upsilon$:
 \begin{align*}
 \Upsilon(\chi_1, \dots, \chi_k, \phi)& = \<\mu, t^rD(\eta_r)(X_1,\dots, X_k)\\ 
 & \hspace{-50pt}+ \sum_{j=1}^k (-1)^{j-1} h^r_j \, l(\alpha_r)(X_1, \dots, \hat{X_j}, \dots, X_k)\> +  g^s \<\varphi_s, \theta(X_1,\dots, X_k)\>.
 \end{align*}
The dependence on $t^r, h_j^r, g^s$ of the local expression of $\Upsilon$ shows that it is fiberwise linear on $\bA \to \bM$. At last, the equalities \eqref{linear_comp} follow from the definition of $\Upsilon$ and 
\begin{equation*}
 (\chi_1, \dots, \chi_k, \phi) = 
 \begin{cases}
  \chi_\eta(X_1, \dots, X_k, \mu) \Leftrightarrow a_i=0, \, \xi=0, \text{ for } \zeta =\eta\\
  \B\beta_{(j)}(X_1, \dots, X_k, \mu)  \Leftrightarrow a_i = \delta_i^j \beta, \,\xi=0, \text{ for } \zeta =0\\
  \B \varphi(X_1, \dots, X_k,\mu) \Leftrightarrow a_i=0,\, \xi = \varphi(x), \text{ for } \zeta = 0, 
 \end{cases}
 \end{equation*}
where $\delta^j_i$ is the Kronecker delta.
 \end{proof}

\begin{proposition}\label{prop:IM_eq}
The section $\Upsilon \in \Gamma(\bA^*)$ given by \eqref{Flinear_def} is a Lie algebroid cocycle if and only if $(D,l,\theta)$ is an IM $k$-form with values in $\v$.
\end{proposition}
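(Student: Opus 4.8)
The plan is to read the statement through the Chevalley--Eilenberg differential. For a $1$-cochain $\Upsilon \in \Gamma(\bA^*)$ the cocycle condition is
$$
d\Upsilon(\chi, \chi') = \rho_\bA(\chi)\<\Upsilon, \chi'\> - \rho_\bA(\chi')\<\Upsilon, \chi\> - \<\Upsilon, [\chi, \chi']\> = 0, \qquad \chi, \chi' \in \Gamma(\bA).
$$
Since $d\Upsilon$ is $C^\infty(\bM)$-bilinear and skew-symmetric, it is a genuine section of $\wedge^2 \bA^*$; as the sections $\{\chi_\eta, \B\beta_{(j)}, \B\varphi\}$ of \eqref{gen_set} generate $\Gamma(\bA)$ as a $C^\infty(\bM)$-module (and hence span each fiber pointwise), it suffices to evaluate $d\Upsilon$ on every pair of generators. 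This reduces the proposition to six cases, each of which I would compute using the bracket and anchor formulas \eqref{big_algebroid}, the values \eqref{linear_comp} of $\Upsilon$ on the generators, and Corollary \ref{cor:action} to translate the Lie derivatives of the $\c$-functions into intrinsic operations; injectivity of $\c$ then converts each vanishing into an equation on $(D,l,\theta)$. The aim is to show that four pairs reproduce the four equations (IM1), (IM2), (IM3), (IM5), one pair reproduces (IM4), and the remaining pair $(\B\varphi_1, \B\varphi_2)$ is automatically trivial.

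The four routine matchings go as follows. For $(\chi_{\eta_1}, \chi_{\eta_2})$ the anchor acts by $\Lie_{(\rho(\alpha)^{T,k}, W_{\nabla_\eta})}\c_{D(\cdot)} = \c_{L_{\nabla_\eta}D(\cdot)}$ and the bracket is $\chi_{[\eta_1,\eta_2]}$, giving (IM1). The pair $(\chi_\eta, \B\beta_{(j)})$ combines this tangent-lift action with the contraction formula $\Lie_{(Y^{\uparrow,k}_{(j)},0)}\c_\tau = (-1)^{j-1}\c_{i_Y\tau}\circ\gamma_{(j)}$ and the bracket $\B[\alpha,\beta]_{(j)}$, yielding (IM2) after cancelling the common $(-1)^{j-1}\gamma_{(j)}$. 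The pair $(\B\beta_{(i)}, \B\beta'_{(j)})$ has vanishing bracket; its two anchor terms are nonzero only when $i \neq j$ (for $i=j$ each term differentiates a function that does not depend on the contracted slot), and the contraction formula produces (IM3). Finally $(\B\beta_{(j)}, \B\varphi)$ again has zero bracket, and its two terms pair $i_{\rho(\beta)}\theta$ against $\partial(l(\beta))$ via the vertical-lift action on $C^*$ together with the identity $\<\partial^*\varphi, \cdot\> = \<\varphi, \partial(\cdot)\>$, producing (IM5); the pair $(\B\varphi_1, \B\varphi_2)$ is vacuous since both terms differentiate $C^*$-independent functions along the vertical lifts $\partial^*(\varphi_i)^\uparrow$ of $C^*$.

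The one case requiring genuine care is $(\chi_\eta, \B\varphi)$, which must yield (IM4). Using $[\chi_\eta, \B\varphi] = \B(\nabla_\eta^\top\varphi)$ from \eqref{eq:dual_vb}, the cocycle equation becomes
$$
\Lie_{\rho(\alpha)}\<\varphi, \theta\> - \<\varphi, \partial(D(\eta))\> - \<\nabla_\eta^\top\varphi, \theta\> = 0.
$$
The main obstacle is to recognize that the first and third terms assemble through the Leibniz rule for the dual pair of $\F(\v)$-connections on $E$ and $E^*$: since $\nabla_\eta|_E$ has symbol $\rho(\alpha)$ and $\nabla^\top$ is its dual (as established in the derivation of \eqref{eq:dual_vb}), one has $\Lie_{\rho(\alpha)}\<\varphi,\theta\> - \<\nabla_\eta^\top\varphi, \theta\> = \<\varphi, L_{\nabla_\eta}\theta\>$, whence $L_{\nabla_\eta}\theta = \partial(D(\eta))$ by arbitrariness of $\varphi$. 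Apart from this duality identity, the principal bookkeeping difficulty is keeping track of the signs introduced by the forgetful projections $\gamma_{(j)}$ and by the skew-symmetry of $\Upsilon$ in its $TA$-slots. Once the six pairs are matched to (IM1)--(IM5), and noting that the compatibility equations \eqref{eq:compatibility} are already built into the definition of $\Upsilon$, the equivalence follows.
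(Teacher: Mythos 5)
Your proposal is correct and follows essentially the same route as the paper: both reduce the cocycle equation, by $C^\infty(\bM)$-tensoriality, to the six pairs of generators from \eqref{gen_set}, and then match them to (IM1)--(IM5) (plus the vacuous $(\B\varphi_1,\B\varphi_2)$ case) using \eqref{big_algebroid}, \eqref{linear_comp}, Corollary \ref{cor:action} and injectivity of $\c$; your treatment of the $(\chi_\eta,\B\varphi)$ case via the duality of $\nabla_\eta$ and $\nabla_\eta^\top$ is exactly the computation the paper displays for (IM4). The only cosmetic difference is that the paper outsources the $(\B\beta_{(i)},\B\beta_{(j)})$ case to \cite{Bur-Drum}, where you sketch it directly (including the trivial $i=j$ subcase).
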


\begin{proof}
The proof follows closely the proof of Proposition 4.12 in \cite{Bur-Drum}.  By definition, $\Upsilon$ is a cocycle if and only the equation 
 \begin{equation}\label{eq:cocycle}
 \<\Upsilon,[\chi_1, \chi_2]\> = \Lie_{\rho_\bA(\chi_1)} \< \Upsilon, \chi_2\> - \Lie_{\rho_\bA(\chi_2)} \<\Upsilon, \chi_1\>
 \end{equation}
 is fulfilled for any $\chi_1, \chi_2 \in \Gamma(\bA)$. It suffices to consider $\chi_1, \chi_2$ varying on the set of generators \eqref{gen_set}. In these cases, one uses equations \eqref{linear_comp} and \eqref{big_algebroid} on each of the six possible cases to show the equivalence of \eqref{eq:cocycle} to the set of IM equations.
 \smallskip
 
 \paragraph{\bf Equation (IM1):} Let $\eta_1, \eta_2 \in \Gamma_{lin}(E, \v)$ and consider $\chi_1= \chi_{\eta_1}, \, \chi_2 = \chi_{\eta_2}$. In this case, one can see that \eqref{eq:cocycle} is equivalent to
 \begin{align*}
  \c_{D([\eta_1, \eta_2])} & = \Lie_{(\rho(\alpha_1)^{T,k}, W_{\nabla_{\eta_1}})} \c_{D(\eta_2)} - \Lie_{(\rho(\alpha_2)^{T,k}, W_{\nabla_{\eta_2}})} \c_{D(\eta_1)}\\
   & = \c_{L_{\nabla_{\eta_1}} D(\eta_2) - L_{\nabla_{\eta_2}} D(\eta_1)},
 \end{align*}
 where the last equality follows from Corollary \ref{cor:action}.
 \smallskip
 
 \paragraph{\bf Equations (IM2) and (IM4):} Let $\eta \in \Gamma_{lin}(E,\v)$ and consider $\chi_1=\chi_{\eta}$, $\chi_2 = \B \beta_{(j)}$. In this case, \eqref{eq:cocycle} reduces to 
 \begin{align*}
 \<\Upsilon, \B([\alpha, \beta])_{(j)}\> & = (-1)^{(j-1)} \Lie_{(\rho(\alpha)^{T,k}, W_{\nabla_{\eta}})}(\c_{l(\beta)}\circ \gamma_{(j)}) - \Lie_{(\rho(\beta)^{\uparrow, k}_{(j)},0)} \c_{D(\eta)}\\
 & = (-1)^{(j-1)} (\Lie_{(\rho(\alpha)^{T,k-1}, W_{\nabla_{\eta}})} \c_{l(\beta)}) \circ \gamma_{(j)} - \Lie_{(\rho(\beta)^{\uparrow, k}_{(j)},0)} \c_{D(\eta)}.
\end{align*}
Now, using Corollary \ref{cor:action}, one can show that the last equation is equivalent to
\begin{align*}
 (-1)^{j-1}\c_{l([\alpha,\beta])} \circ \gamma_{(j)} = (-1)^{j-1}(\c_{L_{\nabla_\eta} l(\beta) - i_{\rho(\beta)}D(\eta)})\circ \gamma_{(j)}
\end{align*}
 Similarly, if $\chi_2 = \B \varphi$, for $\varphi \in \Gamma(E^*)$, one obtains that
$$
\<\nabla_\eta^\top \varphi, \theta\>  = \Lie_{\rho(\alpha)} \<\varphi, \theta\> - \<\varphi, \partial(D(\eta))\> \Leftrightarrow \<\varphi, L_{\nabla_\eta} \theta\> = \< \varphi, \partial(D(\eta))\>.
$$
\smallskip

\paragraph{\bf Equation (IM3):} 
It follows exactly as in the proof of Equation (IM4) in \cite{Bur-Drum}.

\paragraph{\bf Equation (IM5):} Let now $\chi_1= \B\beta_{(j)}$, $j=1,\dots, k$, and $\chi_2 = \B\varphi$. As $[\B\beta_{(j)},\B\varphi ]=0$, equation \eqref{eq:cocycle} reduces to 
\begin{align*}
0 & = \Lie_{(\rho(\beta)^{\uparrow,k}_{(j)},0)} (\c_{\<\varphi, \theta\>}\circ \gamma_{(k+1)}) - (-1)^{j-1} \Lie_{(0, \partial^*(\varphi)^\uparrow)} (\c_{l(\beta)}\circ \gamma_{(j)})\\
 & = (\Lie_{\rho(\beta)^{\uparrow,k}_{(j)}}\c_{\<\varphi, \theta\>}) \circ \gamma_{(k+1)} -  (-1)^{j-1} (\Lie_{(0, \partial^*(\varphi)^\uparrow))} \c_{l(\beta)}) \circ \gamma_{(j)}\\
& = (-1)^{j-1}\c_{\<\varphi, i_{\rho(\beta)}\theta\>} \circ \gamma_{(j)}\circ \gamma_{(k+1)}- (-1)^{j-1}\c_{\<\varphi, \partial(l(\beta))\>}\circ \gamma_{(k)}\circ \gamma_{(j)}.
 \end{align*}
 The equation now follows from the fact that $\gamma_{(j)}\circ \gamma_{(k+1)}$ and $\gamma_{(k)}\circ \gamma_{(j)}$ are the same projection from $\times_M^{k} TM \times_M C^*$ from $\times_M^{k-1} TM$ which forgets the $j$-th component on $TM$ and the last component on $C^*$.
 \end{proof}

\begin{proof}[Proof of Theorem \eqref{thm:main}]
\noindent
\paragraph{\bf Differentiation:} Let $\vartheta \in \Omega^k(\G, \V)$ be a multiplicative $k$-form. From Proposition \ref{prop:inf_comp}, there exists $D: \Gamma_{lin}(E,\v) \to \Omega^k(M,C)$,  $l: A \to \wedge^{k-1} T^*M \otimes C$ and $\theta \in \Omega^k(M,E)$ satisfying \eqref{eq:inf_comp}. If $\Upsilon \in \Gamma(\bA^*)$ is the element defined by \eqref{Flinear_def} using the triple $(D,l,\theta)$, then $\Upsilon=\mathrm{Lie}(\c_{\vartheta})$. Indeed, the equality follows from comparing the values of $\Upsilon$ and $\mathrm{Lie}(\c_\vartheta)$  on the set of generators of $\Gamma(\bA)$ as given by equations \eqref{eq:liner_inf} and \eqref{linear_comp}, respectively. As $\mathrm{Lie}(\c_\vartheta)$ is a cocycle on $\bA$, it follows from Proposition \ref{prop:IM_eq} that $(D,l,\theta)$ satisfies the IM-equations.
\smallskip

\paragraph{\bf Integration:} Let $(D,l,\theta)$ be an IM $k$-form on $A$ with values on $\v=\mathrm{Lie}(\V)$ and define $\Upsilon$ as \eqref{Flinear_def}. By Proposition \ref{prop:IM_eq},  $\Upsilon \in \Gamma(\bA^*)$ is a Lie algebroid cocycle. As $\G$ is 1-connected, so is $\bG$ \footnote{We are using the well-known fact that a source fiber of a $\VB$-groupoids is an affine bundle over the corresponding source fiber of the base groupoid (see \cite[Rem.~3.1.1.(a)]{Bur-Cab-Hoy}).} Hence, $\Upsilon$ integrate to a componentwise multiplicative function on $\bG$ which is skew-symmetric on the $T\G$ components (i.e. there exists $\vartheta \in \Omega^k(\G, \V)$ such that $\Upsilon = \mathrm{Lie}(\c_\vartheta)$)\footnote{We are using \cite[Prop.~A.3]{Bur-Drum} here to ensure that the multiplicative function on $\bG$ is componentwise linear and skew-symmetric.} At last, it follows from comparing \eqref{eq:liner_inf} and \eqref{linear_comp} that $(D,l,\theta)$ satisfies \eqref{eq:inf_comp}. This concludes the proof.
\end{proof}

We end this subsection with an important remark interpreting the Lie functor at the level of differential forms (instead of functions). 
 
\begin{remark}\label{Lie_on_forms}\em
Let $\vartheta \in \Omega^k(\G, \V)$ be a multiplicative form and consider the $k$-form $\tau \in \Omega^k(A, \v)$ such that $\Upsilon = \mathrm{Lie}(\c_{\vartheta}) = \c_\tau$; we interpret $\tau$ as $\mathrm{Lie}(\vartheta)$. Now, consider both $A \to M$ and $\v \to E$ as Lie groupoids with respect to their vector bundle structures and let us apply the Lie functor from $A$ to $\mathrm{Lie}(A) = A$ and from $\v$ to $\mathrm{Lie}(\v)=\v$ (with zero bracket and zero anchor).  The linearity of $\c_\tau$ (with respect to $\bA \to \bM$) is equivalent to $\tau$ being a multiplicative form on $A$ with values in $\v$. Also, as $\mathrm{Lie}(\c_\tau) = \c_\tau$, one gets from comparing \eqref{eq:liner_inf} and \eqref{Flinear_def} that the infinitesimal components of $\vartheta$ and $\tau$ coincide. In particular, applying Theorem \ref{thm:main} to $\tau$, one gets infinitesimal formulas for $D, l, \theta$
\begin{align}\label{eq:linear_corresp}
 D(\eta) = L_{\Delta_{\eta^\uparrow}}(\tau)|_M, \,\, l(\alpha) = i_{\alpha^\uparrow}\tau|_M, \,\, \theta= \tau|_M.
\end{align}
Here, $M \subset A$ as the zero section and one has to notice that the vertical lifts $\eta^\uparrow \in \frakx(\v), \alpha^\uparrow \in \frakx(A)$, for $\eta \in \Gamma_{lin}(E,\v)$ and $\alpha \in \Gamma(A)$, are exactly the right-invariant vector fields.
\end{remark}
 
\subsection{Proof of Theorem \ref{thm:morita}}
We will follow closely the proof of the Morita invariance of the cohomology of 2-term ruth given in \cite{Hoyo-Ort}. 

The main ingredient of the proof is to embed the differential complex $C^{p,q}(\V)$ in the differentiable cochain complex of the big groupoid $\bG$ (see \eqref{big_groupoid}). We shall refer to \cite{Cab-Drum} for the details in what follows. First, one has that 
\begin{equation}\label{form_decomp}
\begin{aligned}
B_p(\times_\G^q T\G) & \cong \times_{B_p\G}^q T(B_p\G)\\
B_p(\times_\G^q T\G \times_\G \V^*) & \cong \times_{B_p\G}^q T(B_p\G) \times_{B_p\G} B_p(\V^*).
\end{aligned}
\end{equation}
The isomorphisms are given by
\begin{align*}
(\underline{U}_1, \dots, \underline{U}_p) & \mapsto (\mathbb{U}^1, \dots, \mathbb{U}^q)\\
((\underline{U}_1, \xi_1), \dots, (\underline{U}_p, \xi_p)) & \mapsto ((\mathbb{U}^1, \dots, \mathbb{U}^q), (\xi_1, \dots, \xi_p)),
\end{align*}
where $\underline{U}_i = (U_i^1, \dots, U_i^q)$, each $U_i^j \in T_{g_i}\G$, for $i=1,\dots, p$, $j = 1, \dots, q$ and $\mathbb{U}^j = (U^j_1, \dots, U^j_p) \in T_{(g_1,\dots, g_p)} B_p\G$. Let us now define $C^{p}_{\rm ext}(\bG)$ as the space of functions $B_p\bG \to \R$ which are multi-linear with respect to the decomposition \eqref{form_decomp} and skew-symmetric on the first $q$-components. From \cite[Prop.~4.7]{Cab-Drum}, we know that $C^{\bullet}_{\rm ext}(\bG)$ is a subcomplex of the differentiable cochain complex of $\bG$ (in fact, there is a chain map $P_{\rm ext}: C^\bullet(\bG) \to C^\bullet_{\rm ext}(\bG)$ which is a projection!).

Now, define $\F: C^{p,q}(\V) \to C^p_{\rm ext}(\bG)$ as follows: for $\vartheta \in C^{p,q}(\V)$, 
$$
\F_\vartheta((\underline{U}_1, \xi_1), \dots, (\underline{U}_p,\xi_p)) = \<\xi_1, \vartheta(\mathbb{U}^1, \dots, \mathbb{U}^q)\>.
$$

\begin{lemma}\label{diff_char}
 A cochain $f \in C^p_{\rm ext}(\bG)$ belongs to the image of $\F$ if and only if it satisfies
 \begin{itemize}
 \item[(i)] $f((\underline{U}_0, 0_{g_0}), (\underline{U}_1, \xi_1),\dots, (\underline{U}_{p-1}, \xi_{p-1})) = 0$;
 \item[(ii)]$f((\underline{U}_0\cdot \underline{U}_1, 0_{g_0}\cdot \xi_1), (\underline{U}_2, \xi_2),\dots, (\underline{U}_p, \xi_p)) = f((\underline{U}_1, \xi_1),\dots, (\underline{U}_p, \xi_p)) $,
 \end{itemize}
for all $((\underline{U}_0, 0_{g_0}),(\underline{U}_1, \xi_1), \dots, (\underline{U}_p, \xi_p)) \in B_{p+1}\bG$.
\end{lemma}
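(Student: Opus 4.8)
The plan is to exploit that $\F$ is visibly injective (a form is recovered from $\F_\vartheta$ by the pairing $\V\times_\G\V^*\to\R$) and to identify its image through conditions (i) and (ii). The whole argument rests on one pairing identity: if $w\in\V_{g_0g_1}$ is factored as $w=a\bullet b$ with $a\in\V_{g_0}$, $b\in\V_{g_1}$, then for $\xi_1\in\V^*_{g_1}$ with $\widetilde{\tar}(\xi_1)=0$ (the composability needed for $0_{g_0}\bullet\xi_1$ to exist), formula \eqref{dual_mult} gives $\<0_{g_0}\bullet\xi_1,w\>=\<0_{g_0},a\>+\<\xi_1,b\>=\<\xi_1,b\>$; moreover $\<\xi_1,b\>$ depends on $b$ only modulo $\mathrm{Ann}(\ker\widetilde{\tar}_{\V^*})$. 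From \eqref{core_ses} one has $\ker\widetilde{\sour}|_{g_1}=\tar^*C_{g_1}$ and, dually, $\ker\widetilde{\tar}_{\V^*}|_{g_1}=\mathrm{Ann}(\tar^*C_{g_1})$, so that modulus is exactly $\ker\widetilde{\sour}|_{g_1}$ and $\<\xi_1,b\>$ is governed by $\widetilde{\sour}(b)=\widetilde{\sour}(w)$.

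First I would prove necessity. Condition (i) is immediate, the first covector being $0_{g_0}$. For (ii), evaluating $\F_\vartheta$ on a composable tuple over $(g_0,\dots,g_p)\in B_{p+1}\G$ and writing the lift $\hat{\mathbb U}^j=(U_0^j,\dots,U_p^j)$, the two sides become $\<0_{g_0}\bullet\xi_1,(\partial_1^*\vartheta)(\hat{\mathbb U}^\bullet)\>$ and $\<\xi_1,(\partial_0^*\vartheta)(\hat{\mathbb U}^\bullet)\>$; by the identity above (and letting $\xi_1$ range over $\ker\widetilde{\tar}_{\V^*}$) these agree for all data if and only if $\partial_1^*(\widetilde{\sour}\circ\vartheta)=\partial_0^*(\widetilde{\sour}\circ\vartheta)$. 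This last equality follows from the defining relation $\widetilde{\sour}\circ\vartheta=\partial_0^*\theta$ of $C^{p,q}(\V)$ via the simplicial identity $\partial_0\partial_1=\partial_0\partial_0$, proving (ii).

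For sufficiency, given $f\in C^p_{\rm ext}(\bG)$ obeying (i) and (ii), I would define $\vartheta(\mathbb U^1,\dots,\mathbb U^q)\in\V_{g_1}$ by $\<\xi_1,\vartheta(\mathbb U^\bullet)\>:=f((\underline U_1,\xi_1),\dots,(\underline U_p,\xi_p))$ for any completion of $\xi_1$ to a composable covector tuple. Here (i) is exactly what guarantees well-definedness: two completions differ by a composable tuple whose first entry vanishes, so their $f$-values coincide by fibrewise linearity of $f$ and (i); linearity in $\xi_1$ is the same computation, and multilinearity and skew-symmetry in the $\mathbb U^j$ are inherited. This yields $\vartheta\in\Omega^q(B_p\G,\pr_1^*\V)$ with $\F_\vartheta=f$. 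Running the necessity computation backwards, (ii) now reads $\partial_1^*\theta'=\partial_0^*\theta'$ for $\theta':=\widetilde{\sour}\circ\vartheta$, and it remains to descend $\theta'$ along $\partial_0$. I would take $\theta:=s_0^*\theta'$, where $s_0\colon B_{p-1}\G\to B_p\G$ inserts the unit $1_{\tar(g_2)}$ in the first slot (so $\theta$ is $t^*E$-valued), and verify $\partial_0^*\theta=\theta'$ by evaluating $\partial_0^*\theta'=\partial_1^*\theta'$ at the point $(g_1,1_{\sour(g_1)},g_2,\dots,g_p)$ on the lift that repeats $\mathbb U$ and inserts in the new slot the unit of the tangent groupoid $T\G\toto TM$ over $T\sour(U_1)$; for this lift $T\partial_1$ returns $\mathbb U$ while $T\partial_0$ returns $Ts_0\,T\partial_0\mathbb U$, giving exactly $\theta'=\partial_0^*\theta$.

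The routine parts are (i) and the well-definedness of $\vartheta$. The substantive content is the matching of (ii) with the source constraint. I expect the main obstacle to be the descent step in the last paragraph: passing from the equality of pullbacks $\partial_1^*\theta'=\partial_0^*\theta'$ to membership of $\theta'$ in the image of $\partial_0^*$ is not a formal simplicial identity but requires the explicit unit-insertion $s_0$ and the precise tangent lift above, and the bookkeeping of which tangent slots are combined, dropped, or filled by groupoid units is where care is needed.
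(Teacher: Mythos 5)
Your proof is correct and follows essentially the same route as the paper: define $\vartheta$ from $f$ by the pairing formula, using fibrewise linearity together with (i) for well-definedness, and then extract $\theta$ from (ii). The only difference is that the paper delegates the final descent step (producing $\theta$ with $\widetilde{\sour}\circ\vartheta=\partial_0^*\theta$ from condition (ii)) to the argument of Lemma~5.4 of \cite{Gra-Met1}, whereas you carry it out explicitly via the identity $\partial_1^*\theta'=\partial_0^*\theta'$ and the degeneracy $s_0$ — a correct and self-contained rendering of the same idea.
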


\begin{proof}
 It is straightforward to check $f \in \F(C^{p,q}(\V))$ satisfies both (i) and (ii). Conversely, if $f$ satisfies (i), then we can define a fiber preserving (over $B_p\G$) map $\widehat{f}: B_p(\times_\G^q T\G) \to \pr_1^*\V$ by
 $$
\<\xi_1,  \hat{f}(\underline{U}_1, \dots, \underline{U}_p)\> =  f((\underline{U}_1, \xi_1),\dots, (\underline{U}_p, \xi_p)), \,\,\, \xi_1 \in \V^*_{g_1},
 $$
 where $(\underline{U}_1, \dots, \underline{U}_p) \in B_p(\times_\G^q T\G)$ is in the fiber over $(g_1, \dots, g_p)$ and $(\xi_2, \dots, \xi_p) \in B_{p-1}(\V^*)$ is any element in the fiber over $(g_2, \dots, g_p)$ with $\widetilde{\sour}(\xi_1) = \widetilde{\tar}(\xi_2)$. The multi-linearity and skew-symmetry of $f$ with respect to \eqref{form_decomp} together with (i) implies that $\hat{f}$ is well-defined and that 
 it can be seen as an element $\vartheta \in \Omega^q(B_p\G, \pr_1^*\V)$ such that $f = \F_{\vartheta}$. At last, one can prove that (ii) implies the existence of $\theta \in \Omega^q(B_{p-1}\G, \tar^*E)$ satisfying
 $$
 \widetilde{\sour} \circ \vartheta = \partial_0^*\theta,
 $$
 following exactly the same argument in the proof of Lemma 5.4 of \cite{Gra-Met1}. This concludes the proof.
\end{proof}

One should think of $C^{p,q}(\V)$ inside $C^p_{\rm ext}(\bG)$ in the same way $\VB$-groupoid cochain complex of $\V$ sits inside the complex of linear cochains (see \cite{Gra-Met1}). 

\begin{lemma}
 $(C^{p,q}(\V), \delta)$ is a chain complex and $\F$ is a chain map. 
\end{lemma}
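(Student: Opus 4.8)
The plan is to transport the whole problem across the embedding $\F$ and reduce it to the already-established fact that $C^\bullet_{\rm ext}(\bG)$ is a subcomplex of the differentiable cochain complex of $\bG$. First I would note that the formula defining $\F$ makes sense for an arbitrary $\pr_1^*\V$-valued form and yields an injective map on each $\Omega^q(B_p\G,\pr_1^*\V)$; in particular $\F$ is injective on $C^{p,q}(\V)$. Granting this, the entire statement reduces to the single intertwining identity
\begin{equation*}
\F_{\delta\vartheta}=\delta\,\F_\vartheta,\qquad \vartheta\in C^{p,q}(\V),
\end{equation*}
where on the right $\delta=\sum_i(-1)^i\partial_i^*$ is the cochain differential of $\bG$ and on the left $\delta$ is the operator \eqref{differential}. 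Indeed, once this holds, $\delta^2=0$ on $C^{\bullet,q}(\V)$ follows immediately from $\delta^2=0$ on $C^\bullet(\bG)$ together with the injectivity of $\F$, and the chain-map assertion is the identity itself.

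To establish the identity I would fix a $(p+1)$-simplex $z=((\underline U_0,\xi_0),\dots,(\underline U_p,\xi_p))\in B_{p+1}\bG$ lying over composable arrows $g_0,\dots,g_p$, and denote by $\mathbb U^m$ the induced tangent vectors of $B_{p+1}\G$. Since $\F_\vartheta$ pairs only the first $\V^*$-entry, and the face maps of $\bG$ act componentwise --- on the $\V^*$-factor through the multiplication \eqref{dual_mult} --- the faces contribute
\begin{align*}
\partial_0^*\F_\vartheta(z) &= \<\xi_1,\partial_0^*\vartheta\>,\\
\partial_1^*\F_\vartheta(z) &= \<\xi_0\bullet\xi_1,\partial_1^*\vartheta\>,\\
\partial_i^*\F_\vartheta(z) &= \<\xi_0,\partial_i^*\vartheta\>\qquad(2\le i\le p+1),
\end{align*}
where $\partial_i^*\vartheta$ abbreviates the value of that form on the $\mathbb U^m$. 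Setting $w:=\partial_1^*\vartheta\bullet(\partial_0^*\vartheta)^{-1}\in\V_{g_0}$, so that $\partial_1^*\vartheta=w\bullet\partial_0^*\vartheta$, and applying \eqref{dual_mult} to the composable pair $(\xi_0,\xi_1)$ gives $\<\xi_0\bullet\xi_1,\partial_1^*\vartheta\>=\<\xi_0,w\>+\<\xi_1,\partial_0^*\vartheta\>$. Substituting into $\delta\F_\vartheta(z)=\sum_i(-1)^i\partial_i^*\F_\vartheta(z)$, the $i=0$ and $i=1$ contributions collapse to $-\<\xi_0,w\>$, and comparison with \eqref{differential} yields exactly $\F_{\delta\vartheta}(z)$. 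The case $p=0$ is handled identically, using $\widetilde{\tar}$, $\widetilde{\sour}$ and the first line of \eqref{differential}.

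Finally, I would confirm that the operator \eqref{differential} is well defined, i.e. that $\delta\vartheta\in C^{p+1,q}(\V)$. Since $\F_\vartheta\in C^p_{\rm ext}(\bG)$ and $C^\bullet_{\rm ext}(\bG)$ is a subcomplex, $\delta\F_\vartheta$ lies in $C^{p+1}_{\rm ext}(\bG)$; I would then verify that it satisfies conditions (i) and (ii) of Lemma \ref{diff_char}, so that Lemma \ref{diff_char} produces a unique $\eta\in C^{p+1,q}(\V)$ with $\delta\F_\vartheta=\F_\eta$. The computation above identifies $\eta$ with the right-hand side of \eqref{differential}, whence $\delta\vartheta=\eta\in C^{p+1,q}(\V)$ and the intertwining identity holds literally.

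I expect the main obstacle to be the bracket computation in the second paragraph: one must ensure that every $\bullet$-product is taken between composable elements, that the factorization $\partial_1^*\vartheta=w\bullet\partial_0^*\vartheta$ is legitimate, and that the signs of $\sum_i(-1)^i\partial_i^*$ reproduce precisely the single inverse and the signs appearing in \eqref{differential}. The subsidiary point --- stability of conditions (i)--(ii) under $\delta$, equivalently the source constraint $\widetilde{\sour}\circ\delta\vartheta=\partial_0^*\theta'$ for the resulting $\theta'$ --- is the other place requiring care, and I would settle it exactly as in the corresponding step of \cite{Gra-Met1}.
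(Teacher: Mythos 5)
Your proposal is correct and follows essentially the same route as the paper: reduce everything to the intertwining identity $\F_{\delta\vartheta}=\delta\F_\vartheta$ via injectivity of $\F$, compute the faces using the multiplicativity \eqref{dual_mult} of the pairing together with the factorization $\partial_1^*\vartheta=(\partial_1^*\vartheta\bullet(\partial_0^*\vartheta)^{-1})\bullet\partial_0^*\vartheta$, and handle well-definedness of $\delta$ on $C^{\bullet,q}(\V)$ through the stability of conditions (i)--(ii) of Lemma \ref{diff_char} under the cochain differential of $\bG$. No substantive differences.
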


\begin{proof}
 The image of $\F$ is a cochain subcomplex of $C^\bullet_{\rm ext}(\V)$ as one can see by noting that: for $f \in C^{\bullet}_{\rm ext}(\V)$, 
\begin{equation}\label{lem:chain_complex}
 \begin{aligned}
\text{if } f \text{ satisfies (i), then } \delta f \text{ satisfies (i)} & \Leftrightarrow f \text{ satisfies (ii)};\\
f \text{ satisfies (ii)} & \Rightarrow \delta f \text{ satisfies (ii)}.
\end{aligned}
\end{equation}
As $\F$ is a monomorphism, one only has to check that the differential induced by $\F$ on $C^{p,q}(\V)$ coincides with \eqref{differential}. For $\vartheta \in C^{p,q}(\V)$, 
\begin{align*}
\delta \F_\vartheta((\underline{U}_1, \xi_1), \dots, (\underline{U}_{p+1},\xi_{p+1})) & = \<\xi_2, (\partial_0^*\vartheta)(\mathbb{U}^1, \dots, \mathbb{U}^q)\>\\
& \hspace{-120pt} - \<\xi_1 \bullet \xi_2, (\partial_1^*\vartheta)(\mathbb{U}^1, \dots, \mathbb{U}^q)\> + \sum_{i=2}^{p+1} (-1)^i \<\xi_1, (\partial_i^*\vartheta)(\mathbb{U}^1, \dots, \mathbb{U}^q)\>.
\end{align*}
The result now follows from \eqref{dual_mult} and
$$
\partial_1^*\vartheta  = (\partial_1^*\vartheta)\bullet (\partial_0^*\vartheta)^{-1} \bullet (\partial_0^*\vartheta).
$$
\end{proof}

\begin{lemma}
 $\F$ is a quasi-isomorphism.
\end{lemma}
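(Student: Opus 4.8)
The plan is to deduce the statement from the corresponding fact for the $\VB$-groupoid $\V^* \toto C^*$, following \cite{Gra-Met1, Hoyo-Ort}. By the two preceding lemmas, $\F$ is an injective chain map whose image is exactly the subcomplex of $C^\bullet_{\rm ext}(\bG)$ cut out by conditions (i) and (ii) of Lemma \ref{diff_char}. Hence $\F$ is a quasi-isomorphism if and only if the inclusion $\F(C^{\bullet,q}(\V)) \hookrightarrow C^\bullet_{\rm ext}(\bG)$ induces an isomorphism in cohomology. The key observation is that, under the identification \eqref{form_decomp}, $C^\bullet_{\rm ext}(\bG)$ is precisely the complex of cochains on the nerve $B_\bullet\bG$ that are linear in the $\V^*$-direction, carrying the skew-symmetric multilinear $T\G$-form data in the remaining $q$ slots; under this identification the subcomplex $\F(C^{\bullet,q}(\V))$ is the $\VB$-groupoid cochain complex of $\V^* \toto C^*$ with the same $T\G$-twist. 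Thus the assertion is the twisted version of the statement that the $\VB$-groupoid cochain complex sits quasi-isomorphically inside the complex of linear cochains, exactly as $C^\bullet_{\VB}(\V)$ sits inside the linear cochain complex in \cite{Gra-Met1}.

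First I would recall the homotopy retraction of the untwisted case. Conditions (i)--(ii) single out the cochains that are normalized and merge-invariant with respect to a leading entry of the form $(\underline{U}_0, 0_{g_0})$, i.e. one whose $\V^*$-component is the zero-covector $0_{g_0}$ of $\V^* \to \G$. I would then build a projection $\Pi$ onto $\F(C^{\bullet,q}(\V))$ together with a homotopy operator $h\colon C^p_{\rm ext}(\bG) \to C^{p-1}_{\rm ext}(\bG)$ satisfying
\begin{equation*}
\delta h + h\,\delta = \mathrm{id} - \Pi,
\end{equation*}
where $h$ and $\Pi$ are assembled from the face maps of $B_\bullet\bG$ by inserting and merging such leading zero-covector entries, exactly as in \cite{Gra-Met1, Hoyo-Ort}. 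The relation \eqref{lem:chain_complex} already records the compatibility of (i)--(ii) with $\delta$ that makes $\Pi$ a chain map, so that once the homotopy identity is established, $\Pi$ is a chain homotopy inverse to the inclusion and $\F$ is a quasi-isomorphism.

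The main obstacle is to set up $h$ and $\Pi$ so that they both land in $C^\bullet_{\rm ext}(\bG)$ and satisfy the homotopy identity in the present twisted and constrained setting. Two points require care. First, $h$ inserts a leading entry with vanishing $\V^*$-component and uses the multiplication of $\bG$; one must check that such insertions respect the composability constraints defining $B_\bullet(\V^*)$, whose fibers over $B_\bullet\G$ are \emph{linear} subspaces of strings of covectors rather than free products, and that the resulting operator stays multilinear and skew-symmetric in the $q$ tangent slots. This is precisely where the vector-bundle structure of $\V^* \to \G$ (not merely its groupoid structure) is used, since contracting toward the zero-covector is what furnishes the homotopy. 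Second, one must verify $\delta h + h\,\delta = \mathrm{id} - \Pi$: granting \eqref{lem:chain_complex}, this reduces to the same bookkeeping with face maps as in \cite{Gra-Met1, Hoyo-Ort}, the only genuinely new feature being the tangent-form slots, which the insertion operator leaves undisturbed and which therefore merely ride along. I expect the verification that $h$ preserves $C^\bullet_{\rm ext}(\bG)$, together with the composability check, to be the delicate part; the remainder is a direct transcription of the argument in the cited references.
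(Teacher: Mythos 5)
Your overall strategy is the one the paper follows --- reduce to showing that the subcomplex of $C^\bullet_{\rm ext}(\bG)$ cut out by conditions (i)--(ii) of Lemma \ref{diff_char} includes quasi-isomorphically, and use \eqref{lem:chain_complex} to reduce everything to condition (i) --- but the device you propose for the homotopy fails as stated, and the missing ingredient is exactly the point you defer as ``the delicate part''. A leading entry of the form $(\underline{U}_0, 0_{g_0})$ is composable with $(\underline{U}_1,\xi_1)$ in $B_{p+1}\bG$ only when $\widetilde{\sour}(0_{g_0})=\widetilde{\tar}(\xi_1)$, i.e.\ only when $\widetilde{\tar}(\xi_1)=0$; so ``inserting leading zero-covector entries'' is simply not defined on general strings and cannot furnish an operator on all of $C^{p}_{\rm ext}(\bG)$. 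What the paper inserts instead is the entry $(\underline{U},\xi)$ with $\underline{U}=\underline{U}_{p-1}^{-1}\bullet\cdots\bullet\underline{U}_1^{-1}$ and $\xi=h(g_{p-1}^{-1}\cdots g_1^{-1},\widetilde{\tar}(\xi_1))$ for a chosen horizontal splitting $h\colon\sour^*C^*\to\V^*$: the horizontal lift is what makes the inserted covector composable with $\xi_1$ while playing the role your zero-covector was meant to play. This auxiliary choice of splitting is the idea absent from your proposal.

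The second discrepancy is in the packaging. The paper does not produce a projection $\Pi$ and a homotopy with $\delta h+h\,\delta=\mathrm{id}-\Pi$. It proves the weaker (but sufficient) claim that any $f_0\in C^p_{\rm ext}(\bG)$ with $\delta f_0$ in the subcomplex can be corrected by a coboundary so as to land in the subcomplex, and it does so by a downward recursion on the level $l$ up to which condition (i) holds: one insertion of the entry above improves $l$ to $l-1$, and the verification that it does uses the hypothesis that $\delta f_0$ already satisfies (i). So the correction is conditional on $f_0$ rather than a chain homotopy applied uniformly; if you insist on the retraction formulation you must iterate the insertion $p$ times and check a considerably messier identity than the one you wrote. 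I would follow the recursive correction of \cite[Lemma~3.1]{Cab-Drum} verbatim, as the paper does; your prediction that the tangent slots ``merely ride along'' is correct, and that part of your plan needs no change.
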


The proof follows exactly the same argument of \cite[Lemma~3.1]{Cab-Drum} with minor modifications (see also \cite[Prop.~4.1]{Hoyo-Ort}). 

\begin{proof}
In the following, we shall identify $C^{p,q}(\V)$ with the image of $\F$ (i.e. the subspace of $C^{p}_{\rm ext}(\bG)$ characterized by Lemma \ref{diff_char}). The result will follow from the following claim: if $f_0 \in C^p_{\rm ext}(\bG)$ satisfies $\delta f_0 \in C^{p,q}(\V)$, then there exists $f_1 \in C^{p-1}_{\rm ext}(\bG)$ such that $f_0 + \delta f_1 \in C^{p,q}(\V)$. 

To prove the claim, first note that, from \eqref{lem:chain_complex}, it suffices to show that if $\delta f_0$ satisfies (i), then there exists $f_1$ such that $f_0+\delta f_1$ satisfies (i).  To do that we shall use a recursion argument. Let us say that $f \in C^p_{\rm ext}(\bG)$ satisfies \textit{(i) up to $l$} if
$$
f((\underline{U}_0, 0_{g_0}), \dots, (\underline{U}_l, 0_{g_l}), (\underline{U}_{l+1}, \xi_{l+1}), \dots,  (\underline{U}_{p-1}, \xi_{p-1})) = 0, 
$$
for every $(\underline{U}_0,\dots, \underline{U}_{p-1}) \in B_p(\times_\G^q T\G)$  and $(\xi_{l+1}, \dots, \xi_{p-1}) \in B_{p-1-l}(\V^*)$ with $\widetilde{\tar}(\xi_{l+1})=0$. It is clear that any $f \in C^p_{\rm ext}(\bG)$ satisfies (i) up to $p$ due to multilinearity. We claim that if $f_0$ is such that $\delta f_0$ satisfies (i) and $f_0$ satisfies (i) up to $l$, then there exists $f_1 \in C^{p-1}_{\rm ext}(\V)$ such that $f_0+ \delta f_1$ satisfies (i) up to $l-1$. Indeed, define $f_1$ as follows:
$$
f_1((\underline{U}_1, \xi_1), \dots, (\underline{U}_{p-1},\xi_{p-1})) = - f_0((\underline{U},\xi), (\underline{U}_1, \xi_1), \dots, (\underline{U}_{p-1}, \xi_{p-1})),
$$
where 
$$
\underline{U}=\underline{U}_{p-1}^{-1} \bullet \dots \bullet \underline{U}_1^{-1},  \,\,\, \xi= h(g_{p-1}^{-1}\dots g_1^{-1}, \widetilde{\tar}(\xi_1)),
$$
$h: \sour^*C^* \to \V^*$ is a horizontal splitting. It is a straightforward computation to check that $f_1$ is indeed multi-linear and skew-symmetric. Also, 
\begin{align*}
(f_0+\delta f_1)((\underline{U}_0, 0_{g_0}), \dots, (\underline{U}_{l-1}, 0_{g_{l-1}}), (\underline{U}_l, \xi_l), \dots, (\underline{U}_{p-1}, \xi_{p-1})) & = \\
& \hspace{-260pt} = (\delta f_0)((\underline{U}_{p-1}^{-1} \bullet \dots \bullet \underline{U}_0^{-1}, 0_{g_{p-1}^{-1}}\dots 0_{g_0^{-1}}), (\underline{U}_0, 0_{g_0}), \dots, (\underline{U}_{p-1}, \xi_{p-1}))\\
& \hspace{-260pt} = 0.
\end{align*}
The proof now follows from the fact that $f$ satisfies (i) up to 0 if and only if it satisfies (i). 
\end{proof}

We are now able to complete the proof of Theorem \ref{thm:morita}.

\begin{proof}[Proof of Theorem \ref{thm:morita}]
 It follows as a Corollary of \cite[Thm.~3.5]{Hoyo-Ort} that the map 
 $$
 F: \underbrace{(\times_{\G'}^q T\G') \times_\G \phi^*(\V^*)}_{\bG'} \to (\times_\G^q T\G) \times_\G \V^*
 $$ induced by the differential of $\phi: \G' \to \G$ and the natural pull-back map $\phi^*(\V^*) \to \V^*$ is a Morita map. As such it defines an isomorphism between the differentiable cochain cohomology $H^\bullet(\bG) \stackrel{\sim}\to H^\bullet(\bG')$. Now, it is straightforward to see that $F$ preserves the $C^\bullet_{\rm ext}$ complexes and the existence of a chain projection $P_{\rm ext}: C^\bullet \to C^\bullet_{\rm ext}$ (see \cite{Cab-Drum}) implies that $F$ induces an isomorphism between the $\rm ext$-chain cohomologies. Now, the proof follows from the following diagram
 $$
 \begin{CD}
  C^{p,q}(\V) @> \phi^* >> C^{p,q}(\phi^*\V) \\
  @V\F VV                       @VV \F V\\
  C^p_{\rm ext }(\bG) @>> F^* > C^p_{\rm ext}(\bG')
  \end{CD}
 $$
 and the fact that $\F$ is a quasi-isomorphism.
\end{proof}

\section{Multiplicative distributions}
In this section, we study multiplicative distribution on Lie groupoids. These are subbundles $\H\subset T\G$ such that $\H \toto \H_M$ is a subgroupoid of $T\G \toto TM$, where $\H_M \subset TM$. The Lie theory of such structures were studied in \cite{CSS} in the case $\H_M = TM$, which we refer here as the \textit{wide case}. We aim here to extend their results to the general case.

\subsection{Some properties}
\noindent
For a multiplicative distribution $\H \toto \H_M$, note that $T\sour(\H) = \sour^*\H_M$ and the exact sequence \eqref{core_ses} induces
\begin{equation}\label{distr_ses}
 0 \to \tar^*K  \stackrel{r}\to \H\stackrel{Ts} \to s^*\H_M \to 0,
\end{equation}
where $K=\H|_M \cap A$ and $r(g,k) = k \bullet 0_g$ is the right-multiplication.  In general, we say that an arbitrary distribution $\H \subset T\G$ is \textit{right-invariant} if it fits in the short exact sequence \eqref{distr_ses}. Note that, for right-invariant distributions, $T\sour: \H \to \H_M$ is automatically a surjective submersion.  We shall refer to the pair $(\H_M, K)$ as the \textit{profile} of the right-invariant distribution $\H$.

\begin{example}\em
For a regular groupoid (i.e. the orbits of $\G$ have constant rank), the isotropy distribution $\H= \ker(T\sour) \cap \ker(T\tar)$ is multiplicative. Here,
$$
\H_M = 0, \,\, K = \ker(\rho).
$$
In the case $\G = \G(P)$, the gauge groupoid associated to a principal bundle $P \to M$, it can be proved that connections on $P$ correspond to multiplicative complements to the isotropy distribution (see \cite[Prop.~3.4]{Bur-Drum2}).
\end{example}

\begin{example}\em
For a multiplicative distribution on a Lie group, $\H_M = 0$ and $K$ is an ideal on the Lie algebra. The distribution is the bi-invariant distribution corresponding to $K$ \cite{Ort1}.
\end{example}

\begin{example}\em
A Cartan connection on $\G$ is a multiplicative distribution with profile
$$
\H_M = TM, \,\, K = 0.
$$
The existence of a Cartan connection on $\G$ imposes strong restrictions on the Lie groupoid. Indeed, assuming $\G$ has source 1-connected fibers and $M$ is compact, simply connected, the existence of a Cartan connection is equivalent to $\G$ being the action groupoid corresponding to a Lie group action on $M$ (see \cite[Rem.~2.14]{Arias-Crai1}).
\end{example}

\begin{example}\em
 Multiplicative distributions on a vector bundle $E \to M$ (seen as a Lie groupoid with multiplication given by fiberwise sum) are called \textit{linear distributions}. An important example is given by the horizontal distribution corresponding to a connection $\nabla: \mathfrak{X}(M) \times \Gamma(E) \to \Gamma(E)$. In fact, any linear distribution on $E$ with profile $(TM, 0)$ is the horizontal distribution for a connection on $E$.
\end{example}

For a right-invariant distribution, let us consider pointwise splittings of \eqref{distr_ses} as follows: 
$$
\F_\H(T\G) = \{ (g,b) \in \F(T\G) \,\, | \,\, b(X) \subset \H, \,\,\, \forall \, X \in (\H_M)_{\sour(g)}\}, \,\,\, J^1_\H\G= J^1\G \cap \F_\H(T\G).
$$
The next Proposition gives criteria to a right-invariant distribution to be multiplicative.

\begin{proposition}
 A right-invariant distribution $\H$ with profile $(\H_M, K)$ is multiplicative if and only if 
\begin{itemize}
\item[(i)]$\rho(K) \subset \H_M$;
\item[(ii)]$\F_\H(T\G) \toto M$ is a Lie subcategory of $\F(T\G) \toto M$;
\item[(iii)] The fat  representation of $\F(T\G)$ on $\rho: A \to TM$ restricts to a representation of $\F_\H(T\G)$ on $\rho: K \to \H_M$.
\end{itemize}
\end{proposition}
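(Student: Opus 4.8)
The plan is to reduce the single requirement that $\H\toto\H_M$ be a subgroupoid of $T\G\toto TM$ to the separate verification of the structural axioms (source, target, unit, multiplication, inverse), and then to match each of them against one of the conditions (i)--(iii). The starting point is a pointwise description of $\H$ in terms of the restricted fat category. Writing $k_R(g)=k\bullet 0_g$, I claim that over each $g\in\G$ one has
\[
\H_g=\{\,b(X)+k_R(g)\ :\ (g,b)\in\F_\H(T\G),\ X\in(\H_M)_{\sour(g)},\ k\in K_{\tar(g)}\,\}.
\]
The inclusion $\supset$ is immediate from the definitions of $\F_\H(T\G)$ and of $K$; the reverse inclusion uses that right-invariance (the sequence \eqref{distr_ses}) makes $T\sour\colon\H_g\to(\H_M)_{\sour(g)}$ onto, so that $\F_\H(T\G)$ is a nonempty affine subbundle of $\F(T\G)$ and every element of $\H_g$ is realized as above. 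In particular the source axiom is automatic, and the unit axiom is exactly the statement that the identities of $\F(T\G)=J^1\G$ lie in $\F_\H(T\G)$, hence part of (ii).

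I would next treat the target. Using $\widetilde{\tar}|_A=\rho$, the identity $\widetilde{\tar}(v\bullet w)=\widetilde{\tar}(v)$, and the fat representation formula \eqref{fat_rep2}, for $U=b(X)+k_R(g)\in\H_g$ one computes directly
\[
T\tar(U)=\Psi_{(g,b)}(X)+\rho(k).
\]
Letting $X$ and $k$ vary independently shows that $T\tar(\H)\subset\H_M$ holds if and only if $\rho(K)\subset\H_M$ (the case $X=0$) and $\Psi$ preserves $\H_M$ along $\F_\H(T\G)$ (the case $k=0$); these are condition (i) and the ``base'' part of (iii).

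The heart of the argument is the multiplication axiom. For composable $U_i=b_i(X_i)+(k_i)_R(g_i)\in\H_{g_i}$ (composability reading $X_1=\Psi_{(g_2,b_2)}(X_2)+\rho(k_2)$), I would expand $b_1(X_1)$ by linearity and apply the interchange law twice, together with \eqref{fat_rep2}, to reach the key identity
\[
U_1\bullet U_2=(b_1\cdot b_2)(X_2)+\big(k_1+\Psi_{(g_1,b_1)}(k_2)\big)_R(g_1g_2),
\]
where $b_1\cdot b_2$ is the product in the fat category. Reading this against the pointwise model of $\H_{g_1g_2}$, closure under multiplication is equivalent to $(g_1g_2,b_1\cdot b_2)\in\F_\H(T\G)$ (horizontal part) together with $\Psi_{(g_1,b_1)}(k_2)\in K$ (core part). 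Ranging over all composable pairs and all $X_2\in\H_M$, the former is precisely condition (ii); the latter is the ``fibre'' part of (iii). Combining this with the target computation, the fat representation preserves both $K$ and $\H_M$ and, by (i), satisfies $\rho(K)\subset\H_M$, i.e. it restricts to a representation on $\rho\colon K\to\H_M$, which is (iii); conversely, under (i)--(iii) the same two displays show that $T\tar$ and $\bullet$ preserve $\H$, while (ii) supplies the units.

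It remains to handle inversion, and this is where I expect the only genuine subtlety. The point is that (i) together with the base part of (iii) upgrade $T\tar$ to a surjective submersion $\H\to\H_M$, so $\H$ is simultaneously right- and ``left''-invariant; given $U\in\H_g$ one then picks $W\in\H_{g^{-1}}$ with $T\tar(W)=T\sour(U)$, forms $P=U\bullet W\in\H$ lying over the unit space, and writes $U^{-1}=W\bullet P^{-1}$, so that closure reduces to invertibility \emph{inside} $\H$ of elements over the units. This last step follows from the unit axiom and a rank count for the linear subbundle $\H|_M=Tu(\H_M)\oplus K$. The main obstacle throughout is purely computational --- the bookkeeping of the two interchange-law applications yielding the product identity --- whereas the entire conceptual content lies in reading that identity and the target formula against the pointwise description of $\H$.
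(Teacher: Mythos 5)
Your decomposition $\H_g=\{\,b(X)+k\bullet 0_g\,\}$, the target formula $T\tar(U)=\Psi_{(g,b)}(X)+\rho(k)$, and the product identity $U_1\bullet U_2=(b_1\cdot b_2)(X_2)+(k_1+\Psi_{(g_1,b_1)}(k_2))\bullet 0_{g_1g_2}$ are exactly the paper's computations, and that part of the argument is sound. The divergence, and the gap, is in the inversion step. Your construction $U^{-1}=W\bullet P^{-1}$ hinges on choosing $W\in\H_{g^{-1}}$ with $T\tar(W)=T\sour(U)$, i.e.\ on the surjectivity of $T\tar\colon\H\to\H_M$, which you claim follows from (i) together with the base part of (iii). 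It does not: those conditions only give the inclusion $T\tar(\H)\subset\H_M$. Over $g^{-1}$ the image of $T\tar|_{\H}$ is $\Psi_{(g^{-1},\hat b)}\bigl((\H_M)_{\sour(g^{-1})}\bigr)+\rho(K)$, and nothing you cite forces this to exhaust $(\H_M)_{\tar(g^{-1})}$, since elements of the fat \emph{category} $\F_\H(T\G)$ need not act invertibly on $\H_M$. The paper in fact deduces the surjectivity of $T\tar|_\H$ \emph{from} closure under inversion (via $T\tar=T\sour\circ\iota$), which is the opposite logical order to yours; as written, your argument is circular at exactly the point you flag as the only subtlety.

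The gap is repairable, but it needs (ii), not just (i) and (iii): by (ii) the composite $(g,b)\cdot(g^{-1},\hat b)$ lies in $\F_\H(T\G)$ over a unit, where every element has the form $X\mapsto X+\psi(X)$ with $\psi(\H_M)\subset K$ (this uses $\H|_M=\H_M\oplus K$, itself a consequence of the units belonging to $\F_\H(T\G)$); hence $\Psi_{(g,b)}\circ\Psi_{(g^{-1},\hat b)}=\mathrm{id}+\rho\circ\psi$ on $\H_M$, and adding $\rho(K)$ recovers all of $(\H_M)_{\tar(g)}$, which is the surjectivity you need. Alternatively, the paper's route avoids the issue entirely by inverting $U=b(X)+k\bullet 0_g$ termwise: $(k\bullet 0_g)^{-1}=0_{g^{-1}}\bullet(\rho(k)-k)$ lies in $\H$ by (i), and $b(X)^{-1}=\hat b(\Psi_b(X))+c\bullet 0_{g^{-1}}$ with $c=X-(\hat b\cdot b)(X)\in\H|_M\cap A=K$ by (ii). Either fix works, but as submitted the surjectivity claim is unproved and load-bearing.
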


\begin{proof}
 It is straightforward to check that if $\H$ is multiplicative, then (i), (ii) and (iii) holds. Conversely, let us assume that (i), (ii) and (iii) hold. First note that $\H_M = \H|_M \cap TM$, since the units are in $\F_\H(T\G)$.  Also, $T\tar(\H) \subset \H_M$ because $\rho(K)\subset \H_M$ and $\Psi_b$ preserves $\H_M$, for $b \in \F_\H(T\G)$. It will be a surjective submersion onto $\H_M$ once we prove that the inversion preserves $\H$  (since $T\tar = T\sour \circ \iota$). Let $U_1, U_2 \in \H$ be composable vectors and write
 $
 U_1 = b_1(X_1) + k_1 \bullet 0_{g_1}, \,\,\, U_2 = b_2(X_2) + k_2\bullet 0_{g_2},
 $
 for $b_1, b_2 \in \F_\H(T\G)$, $X_1, X_2 \in \H_M$ and $k_1, k_2 \in K$. One can check that
 $$
 U_1\bullet U_2 = (b_1 \cdot b_2)(X_2) + (\Psi_{b_1}(k_2) + k_1) \bullet 0_{g_1g_2} \in \H.
 $$
 Let us now prove that $U \in \H \Rightarrow U^{-1} \in \H$. Write $U=b(X)+k\bullet 0_g$, for $b \in \F_H(T\G)$ and $k \in K$. Note that 
 $$
 k^{-1} = \rho(k) - k \in \H_M \oplus K \subset \H|_M \Rightarrow (k\bullet 0_g)^{-1}=0_{g^{-1}}\bullet k^{-1} \in \H.
 $$
Choose $\hat{b}:T_{\tar(g)}M \to T_{g^{-1}}\G$ in $\F_\H(T\G)$ and write $b(X)^{-1} = \hat{b}(\Psi_b(X)) + c \bullet 0_{g^{-1}}, \,\, \text{ for } c \in A$. Now,
$$
X=b(X)^{-1} \bullet b(X) = (\hat{b}\cdot b)(X) + c \Rightarrow c \in \H|_M \cap A = K \Rightarrow b(X)^{-1} \in \H.
$$
Hence, $U^{-1} =b(X)^{-1} + (k \bullet 0_{g})^{-1} \in \H$ and this proves that $\H \toto \H_M$ is a Lie subgroupoid.
\end{proof}

From now on, let us fix a multiplicative distribution $\H \subset T\G$ with profile $(\H_M, K)$, consider the $\VB$-groupoid $\V = T\G/\H \toto TM/\H_M$, with core $A/K$. The core anchor of $\V$ is the map induced by $\rho: A \to TM$ on the quotient bundles, $\overline{\rho}: A/K \to TM/\H_M$. Any $(g,b) \in \F_{\H}(T\G)$ defines a linear map from $(TM/\H_M)_{\sour(g)} \to (T\G/\H)_{g}$ which we denote by $[(g,b)]$. Also,
$$
\F_{\rm inv}(\V) = \{[(g,b)] \,\,| \,\, (g,b) \in J^1_{\H}\G\}.
$$
The fat representation of $\F_{\rm inv}(\V)$ on $\overline{\rho}: A/K \to TM/\H_M$ is given by 
$\Psi_{[(g,b)]} = [\Psi_{(g,b)}]$, where $[\Psi_{(g,b)}]$ is the quotient chain map induced by the adjoint representation of $J^1\G$ on $\rho:A \to TM$.

Infinitesimally, we have a similar picture. Let $\frakh \subset TA$ and $\v = TA/\frakh$ be the $\VB$-algebroids of $\H$ and $T\G/\H$, respectively. Define 
$$
J^1_\frakh A = \{ \eta \in J^1A\,\,|\,\, \eta(\H_M) \subset \frakh\}, \,\,\,\Hom_\frakh(TM,A) = J^1_\frakh A \cap \Hom(TM,A).
$$
The short exact sequence \eqref{linear_ses} for $TA \to TM$ induces
$$
0 \to \Hom_\frakh(TM,A) \to J^1_\frakh A \to A \to 0.
$$
The surjectiveness of $J^1_\frakh A \to A$ follows from the existence of adapted connections \footnote{A connection $\nabla: \mathfrak{X}(M) \times \Gamma(A) \to \Gamma(A)$ is \textit{adapted} if the linear vector fields on $A$ corresponding to the derivations $\nabla_X$, $X \in \H_M$, belong to the distribution $\frakh$.} (see \cite[Prop.~5.5]{Dru-Jotz-Ort}). Any section $\eta \in \Gamma(J^1_\frakh A)$ induces a linear section $[\eta]: TM/\H_M \to \v$ and the map $\eta \mapsto [\eta]$ defines a surjection  $\Gamma(J^1_\frakh A) \to \Gamma_{lin}(TM/\H_M, \v)$. 

\begin{example}\em[Wide case]
In the case $\H_M = TM$, the $\VB$-groupoid $\V$ has no side bundle, i.e. $ \V=\tar^*(A/K)$. So, in this case, $\F_{\rm inv}(\V) \cong \G$ and the fat representation on $A/K$ coincides with the $\G$ representation defined as follows:
for $g \in \G$ and $a \in A_{\sour(g)}$, choose any $U \in \H_g$ such that $T\sour(U) = \rho(a)$ and define
\begin{equation*}
g \cdot [a] = [U \bullet a \bullet 0_{g^{-1}}],
\end{equation*}
where $[\cdot]$ is the class mod $K$. It is straightforward to check that $\cdot$ does not depend on the choices made. Similarly, $\F(\v) \cong A$ and the fat representation of $A$ on $A/K$ is given as follows:
\begin{equation}\label{A_connection}
\nabla_\alpha [\beta] = \pi_A([\alpha, \beta] + D^{\rm clas}_{\rho(\beta)}(\eta)),
\end{equation}
where $\alpha, \beta \in \Gamma(A)$, $\pi_A: A \to A/K$ is the quotient projection and $\eta \in \Gamma(J^1_\frakh A)$ is any section such that $\pr(\eta)=\alpha$. Again, the formula is well-defined and it is a consequence of \eqref{jet_representation}.
\end{example}

% In the non-wide case, the $\VB$-groupoid $T\G/\H$ is not naturally presented as a semi-direct product. The way to overcome this difficulty is to consider the projection $T\G \to T\G/\H$ as a differential form with value in a $\VB$-groupoid and apply Theorem \ref{thm:main} to obtain the infinitesimal data. In the following, we shall explore this approach explaining how it relates with previous works \cite{CSS, Dru-Jotz-Ort, Jotz-Ortiz}.

\subsection{IM distributions}
We shall now focus on the infinitesimal picture. It is well-known (see \cite[Thm.~5.7]{Dru-Jotz-Ort}) that there is a 1-1 correspondence between linear distributions $\frakh \subset TA$ with profile $(\H_M, K)$ and operators $\bbD: \Gamma(A) \to \Gamma(\H_M^* \otimes (A/K))$ satisfying a Leibniz identity
\begin{equation}\label{bbd:leibniz}
\bbD_X(f\alpha) = f\bbD_X(\alpha) + (\Lie_Xf) \pi_A(\alpha).
\end{equation}
The operator is obtained from $\frakh$ as follows:
\begin{equation}\label{bbD2}
\bbD_X(\alpha) =  \pi_A([\widehat{X}, \alpha^\uparrow](0_x)), \,\, X \in (\H_M)_x,
\end{equation}
where $\widehat{X} \in \Gamma(A, \frakh)$ is a projectable vector field (with respect to $TA \to TM$) satisfying $\widehat{X}(0_x) = X$.
Reciprocally, one can reconstruct $\frakh$ from $\bbD$ by using the splitting $\sigma: A \to J^1A$ of \eqref{linear_ses} associated to any  connection $\nabla$ on $A$ \textit{adapted to $\bbD$} (i.e. such that $\pi_A(\nabla_X \alpha) = \bbD_X(\alpha)$, for all $X \in \H_M$). In this case, 
\begin{equation}\label{h_from_D}
\frakh_\bbD = \{\sigma(a)(X) +_A (0_a +_{TM} \overline{k}) \,\, | \,\, (a,k,X) \in A \times_M K \times_M \H_M\}.
\end{equation}
\begin{remark}\em
 In the wide case, formula \eqref{bbD2} implies that $\bbD$ is the Spencer operator associated to $\frakh$ obtained in \cite{CSS}.
\end{remark}

Let us now define
$$
\mathcal{J}_\bbD = \{ \eta \in \Gamma(J^1A) \,\, | \,\, \pi_A(D^{\rm clas}_X(\eta)) = \bbD_X(\pr(\eta)), \,\, \forall \, X \in \H_M\}.
$$

\begin{proposition}
 For a linear distribution, one has that 
 $$
 \mathcal{J}_\bbD = \Gamma(J^1_\frakh A).
 $$
 Moreover, for $\eta \in \mathcal{J}_\bbD$, the following facts hold for the fat connection $\nabla_\eta$ on $\rho: A \to TM$:
 \begin{itemize}
 \item[(a)] $\nabla_\eta \text{ preserves } \H_M  \Longleftrightarrow  \overline{\rho}(\bbD_{X}(\alpha)) = -\pi_M([\rho(\alpha), X ])$, $\forall \, X \in \Gamma(\H_M)$,
 \item[(b)] $\nabla_\eta \text{ preserves } K  \Longleftrightarrow  \bbD_{\rho(k)}(\alpha) = -\pi_A([\alpha, k]), \,\,\, \forall \, k \in \Gamma(K).$
 \end{itemize}
where $\alpha = \pr(\eta)$.  
\end{proposition}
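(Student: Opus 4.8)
The plan is to prove the set-theoretic equality $\mathcal{J}_\bbD = \Gamma(J^1_\frakh A)$ first, and then to read off (a) and (b) by substituting the defining constraint of $\mathcal{J}_\bbD$ into the explicit formula \eqref{jet_representation} for the fat connection.

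For the equality, I would fix a connection $\nabla$ on $A$ adapted to $\bbD$ (so $\pi_A(\nabla_X\alpha)=\bbD_X(\alpha)$ for $X\in\H_M$), with associated splitting $\sigma$ of \eqref{linear_ses} determined by $\sigma(\alpha)=j^1\alpha-\B(\nabla\alpha)$, so that $D^{\rm clas}(\sigma(\alpha))=\nabla\alpha$. Starting from the Spencer decomposition \eqref{Spencer_decomp}, $\eta=j^1\alpha-\B D^{\rm clas}(\eta)$ with $\alpha=\pr(\eta)$, and subtracting $\sigma(\alpha)=j^1\alpha-\B(\nabla\alpha)$, one obtains $\eta=\sigma(\alpha)+\B(\nabla\alpha-D^{\rm clas}(\eta))$. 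Evaluating at $X\in(\H_M)_x$ and using $\B\Phi(X)=0_X+_A\overline{\Phi(X)}$, this shows that $\eta(X)$ and $\sigma(\alpha)(X)$ differ by the core element $\overline{\nabla_X\alpha-D^{\rm clas}_X(\eta)}$. Comparing with the reconstruction \eqref{h_from_D}, in which the core is allowed to range exactly over $K$ while the side component is automatically $X\in\H_M$, one concludes that $\eta(X)\in\frakh_\bbD$ for all $X\in\H_M$ if and only if $\nabla_X\alpha-D^{\rm clas}_X(\eta)\in K$, i.e. $\pi_A(D^{\rm clas}_X(\eta))=\pi_A(\nabla_X\alpha)=\bbD_X(\alpha)$. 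The last equality is precisely the condition defining $\mathcal{J}_\bbD$, giving $\eta\in J^1_\frakh A\Leftrightarrow\eta\in\mathcal{J}_\bbD$; since neither $\frakh_\bbD$ nor $\mathcal{J}_\bbD$ involves $\nabla$, the outcome is independent of the chosen adapted connection.

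For (a) and (b) I would feed $\eta\in\mathcal{J}_\bbD$ into \eqref{jet_representation}. For $X\in\Gamma(\H_M)$ one has $\nabla_\eta X=[\rho(\alpha),X]+\rho(D^{\rm clas}_X(\eta))$; applying $\pi_M$ and using $\pi_M\circ\rho=\overline{\rho}\circ\pi_A$ together with $\pi_A(D^{\rm clas}_X(\eta))=\bbD_X(\alpha)$ gives $\pi_M(\nabla_\eta X)=\pi_M([\rho(\alpha),X])+\overline{\rho}(\bbD_X(\alpha))$, so $\nabla_\eta$ preserves $\H_M$ exactly when $\overline{\rho}(\bbD_X(\alpha))=-\pi_M([\rho(\alpha),X])$, which is (a). For $k\in\Gamma(K)$, since $\rho(k)\in\H_M$ the same formula yields $\nabla_\eta k=[\alpha,k]+D^{\rm clas}_{\rho(k)}(\eta)$, and applying $\pi_A$ with $\pi_A(D^{\rm clas}_{\rho(k)}(\eta))=\bbD_{\rho(k)}(\alpha)$ gives $\pi_A(\nabla_\eta k)=\pi_A([\alpha,k])+\bbD_{\rho(k)}(\alpha)$, so $\nabla_\eta$ preserves $K$ exactly when $\bbD_{\rho(k)}(\alpha)=-\pi_A([\alpha,k])$, which is (b). One should note in passing that both right-hand sides are $C^\infty(M)$-linear in $X$ (resp. $k$) on the relevant subbundle, since $\pi_M(X)=0$ for $X\in\Gamma(\H_M)$, so the conditions are well-posed.

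The main obstacle is the first part: carefully tracking the double vector bundle components of $\eta(X)$ in $TA$ and matching the core term (and its sign) between the Spencer decomposition and the reconstruction \eqref{h_from_D}. Once that identification is in place, the remaining assertions are immediate substitutions into \eqref{jet_representation}.
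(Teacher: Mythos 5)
Your proof is correct and takes essentially the same route as the paper's: both fix a connection adapted to $\bbD$, write $\eta = \sigma(\alpha) + \B\Phi$ with $\Phi = \nabla\alpha - D^{\rm clas}(\eta)$, compare against the reconstruction \eqref{h_from_D} to get $\eta(\H_M)\subset\frakh_\bbD \Leftrightarrow \pi_A(D^{\rm clas}_X(\eta))=\bbD_X(\alpha)$, and then obtain (a) and (b) by substituting into \eqref{jet_representation}. The paper leaves those last substitutions to the reader; you have simply written them out.
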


\begin{proof}
 Choose any connection $\nabla$ on $A$ adapted to $\bbD$ and let $\sigma: A \to J^1A$ be the corresponding splitting of \eqref{linear_ses}. Decompose $\eta = \sigma(\alpha) + B\Phi$, where $\alpha= \pr(\eta)$ and $\Phi: TM \to A$ is a vector bundle morphism. Now, from \eqref{h_from_D}, $\eta(\H_M) \subset \frakh_\bbD$ if, and only if, $\Phi(X) \in K$, for all $X \in \H_M$. Now, using that $D^{\rm clas}_X(\eta) = \nabla_X \alpha - \Phi(X)$, one has that
 $$
\pi_A(\Phi(X)) = 0 \Leftrightarrow \pi_A( \nabla_X \alpha - D^{\rm clas}_X(\eta)) = 0 \Leftrightarrow \pi_A(D^{\rm clas}_X(\eta)) = \bbD_X(\alpha).
 $$
 The rest of the Proposition follows from the explicit formulas \eqref{jet_representation} of the fat representation of $J^1A$ on $\rho: A \to TM$.
\end{proof}

In case the fat connection $\nabla_\eta$ preserves the profile $(\H_M, K)$ of a linear distribution $\frakh$, for $\eta \in J^1 A$, we define $\overline{\nabla}_\eta$ as the connection on the quotient complex $\overline{\rho}: A/K \to TM/\H_M$. 

\begin{definition}
 An IM distribution on a Lie algebroid $A$ is a triple $(\H_M, K, \mathbb{D})$, where $\H_M \subset TM$, $K \subset A$ are subbundles with $\rho(K) \subset \H_M$, $
\mathbb{D}: \Gamma(A) \to \Gamma(\H_M^* \otimes (A/K))
$
is a $\mathbb{R}$-linear operator satisfying the Leibniz condition \eqref{bbd:leibniz}
and the IM equations:
\begin{align}
\label{IM_dist1}   \overline{\rho}(\mathbb{D}_X(\alpha)) & = - \pi_M([\rho(\alpha),X]),\\
\label{IM_dist2}   \mathbb{D}_{\rho(k)}(\alpha) & = -\pi_A([\alpha,k]),\\
\label{IM_dist3}   \mathbb{D}_X([\pr(\eta_1), \pr(\eta_2)]) & = \overline{\nabla}_{\eta_1} \mathbb{D}_X(\pr(\eta_2)) - \pi_A(D^{\rm clas}_{[\rho(\pr(\eta_1)), X]}(\eta_2))\\
\nonumber & \hspace{-30pt} - \overline{\nabla}_{\eta_2} \mathbb{D}_X(\pr(\eta_1)) + \pi_A(D^{\rm clas}_{[\rho(\pr(\eta_2)), X]}(\eta_1)),
\end{align}
for  $\alpha \in \Gamma(A), k \in \Gamma(K), \, X \in \Gamma(\H_M), \,  \eta_1, \eta_2 \in \mathcal{J}_\bbD$.
\end{definition}

In the following remarks, we show how IM distributions relate to the Spencer operators obtained in \cite{CSS} and to the infinitesimal data obtained in \cite{Dru-Jotz-Ort}.

\begin{remark}\em
In the case $\H_M = TM$, one has that $\overline{\rho} \equiv 0$ and $\pi_M \equiv 0$, so \eqref{IM_dist1} is trivially satisfied. Also, for $\eta \in \mathcal{J}_\bbD$, the quotient connection $\overline{\nabla}_\eta$ only depends on $\pr(\eta)=\alpha$. Indeed,
$$
\overline{\nabla}_\eta [\beta] = \pi_A([\alpha, \beta]) + \pi_A(D^{\rm clas}_{\rho(\beta)}(\eta)) = \pi_A([\alpha, \beta]) + \bbD_{\rho(\beta)}(\alpha)
$$
(this is exactly the $A$-connection \eqref{A_connection}). Hence, \eqref{IM_dist3} can be rewritten as:
\begin{align*}
 \mathbb{D}_X([\alpha_1, \alpha_2]) = \nabla_{\alpha_1} \mathbb{D}_X(\alpha_2) - \bbD_{[\rho(\alpha_1), X]}(\alpha_2)
- \nabla_{\alpha_2} \mathbb{D}_X(\alpha_1) + \bbD_{[\rho(\alpha_2), X]}(\alpha_1),
\end{align*}
So, in the wide case, an IM distribution $(TM, K, \bbD)$ is the same as a Spencer operator on $A$ relative to $K$ (see \cite[Dfn.~2.16]{CSS}).
\end{remark}
% \medskip
% 
% 
\begin{remark}\em  
By choosing a connection $\nabla^0$ adapted to $\bbD$, one can show that \eqref{IM_dist3} is equivalent to equation (5.16) in \cite[Thm.~5.17]{Dru-Jotz-Ort} (a result giving conditions to linear distribution on $A$ to be a $\VB$-subalgebroid of $TA \to TM$). Indeed, any linear section $\eta \in \mathcal{J}_\bbD$ can be written as
 $
\eta = j^1\alpha - (\B \nabla^0_{\cdot} \alpha + \B\Phi), 
 $
 where $\Phi: TM \to A$ satisfies $\Phi(\H_M) \subset K$. Using this decomposition, one can check that \eqref{IM_dist3} can be re-written as
 \begin{align*}
 \bbD_X([\alpha_1, \alpha_2]) & = \underbrace{\widehat{\nabla}^{\rm bas}_{\alpha_1}\bbD_X(\alpha_2) - \widehat{\nabla}^{\rm bas}_{\alpha_2}\bbD_X(\alpha_1) + \pi_A(\nabla^0_{[\rho(\alpha_2), X]}) \alpha_1 - \nabla^0_{[\rho(\alpha_1), X]} \alpha_2)}_{(\ast)}\\
 & \hspace{-50pt} + \underbrace{ \pi_A(\Phi_1(\nabla_{\alpha_2}^{\rm bas} X) - \Phi_2(\nabla_{\alpha_1}^{\rm bas} X) - \Phi_1\circ \rho \circ \Phi_2(X) + \Phi_2 \circ \rho \circ \Phi_1(X))}_{(\ast \ast)}, 
 \end{align*}
 where $\nabla^{\rm bas}$ is the $A$-connection of the adjoint representation up to homotopy associated to $\nabla^0$ and $\widehat{\nabla}^{\rm bas}_{\alpha}$ is the quotient $A$-connection on $\overline{\rho}:A/K \to TM/\H_M$. Now, $(\ast)$ is exactly the expression appearing in \cite{Dru-Jotz-Ort} and it is straightforward to check that $(\ast \ast) = 0$ using that $\Phi_i(\H_M) \subset K$ and that $\nabla^{\rm  bas}$ preserves $\H_M$.
\end{remark}

The following Proposition gives alternative characterizations of IM distributions.

\begin{proposition}\label{prop:IM_equiv}
 Let $K \subset A$, $\H_M \subset TM$ be subbundles and $\bbD: \Gamma(A) \to \Gamma(\H_M^*\otimes A/K)$ be an $\R$-linear operator satisfying the Leibniz equation \eqref{bbd:leibniz}. Consider the linear distribution \eqref{h_from_D} $\frakh \subset TA$ corresponding to $\bbD$. The following are equivalent:
 \begin{itemize}
  \item[(a)] $(\H_M, K, \bbD)$ is an IM distribution;
  \item[(b)] $\frakh \subset TA$ is a $\VB$-subalgebroid;
  \item[(c)] $J^1_{\frakh} A \subset J^1A$ is a Lie subalgebroid, $\rho(K) \subset \H_M$ and the fat representation of $J^1A$ on $\rho: A \to TM$ restricts to a representation of $J^1_\frakh A$ on $\rho: K \to \H_M$; 
  \item[(d)] The quotient double vector bundle $\v:=TA/\frakh \to TM/\H_M$ with core $A/K$ is a $\VB$-algebroid and $\pi_A: A \to A/K$, $\pi_M: TM \to TM/\H_M$ and the operator $D: \Gamma_{lin}(TM/\H_M, \v) \to \Omega^1(M, A/K)$ given by
  $$
  D([\eta]) = \pi_A(D^{\rm clas}(\eta)), \, \text{ for } \eta \in J^1_\frakh A
  $$
  define an IM 1-form on $A$ with values in $\v$.
 \end{itemize}
\end{proposition}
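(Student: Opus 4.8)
The plan is to establish the cycle $(b)\Leftrightarrow(c)$, $(c)\Leftrightarrow(a)$ and $(c)\Leftrightarrow(d)$, relying throughout on the identification $\mathcal{J}_\bbD=\Gamma(J^1_\frakh A)$ and the surjectivity of $\pr\colon J^1_\frakh A\to A$ (via adapted connections, \cite[Prop.~5.5]{Dru-Jotz-Ort}) proved above, together with the criteria from the preceding Proposition describing when the fat connection $\nabla_\eta$ preserves $\H_M$ and $K$. For $(b)\Leftrightarrow(c)$ I would invoke the characterization of $\VB$-algebroid structures by the bracket relations \eqref{VB1}--\eqref{VB3}. Since $\{\B c,\eta\}$ generate $\Gamma(TM,TA)$, and the linear and core sections of the subbundle $\frakh\subset TA$ are exactly $\Gamma(J^1_\frakh A)$ and $\Gamma(K)$, the subbundle $\frakh$ is a $\VB$-subalgebroid precisely when $[\Gamma(J^1_\frakh A),\Gamma(J^1_\frakh A)]\subset\Gamma(J^1_\frakh A)$ (so $J^1_\frakh A$ is a Lie subalgebroid), when $[\Gamma(J^1_\frakh A),\Gamma(K)]\subset\Gamma(K)$ together with $\rho(K)\subset\H_M$ (so the fat representation restricts to a representation on $\overline\rho\colon K\to\H_M$), while $[\Gamma(K),\Gamma(K)]=0$ is automatic. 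These are exactly the three clauses of $(c)$.

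The core of the argument is $(c)\Leftrightarrow(a)$. By the preceding Proposition, \eqref{IM_dist1} and \eqref{IM_dist2} are equivalent, respectively, to $\nabla_\eta$ preserving $\H_M$ and $K$ for every $\eta\in\mathcal{J}_\bbD$; since $\pr$ is onto, this is precisely the restriction of the fat representation to $\overline\rho\colon K\to\H_M$. It then remains to match \eqref{IM_dist3} with closedness of $J^1_\frakh A$ under the jet bracket. The decisive shortcut here is that the Cartan form $\mathrm{id}_\G$ is multiplicative, so its infinitesimal data $(D^{\rm clas},\mathrm{id}_A,\mathrm{id}_{TM})$ satisfies (IM1); contracting (IM1) with $X\in\frakx(M)$ yields the identity
\[
\begin{aligned}
D^{\rm clas}_X([\eta_1,\eta_2]) ={}& \nabla_{\eta_1}(D^{\rm clas}_X\eta_2)-D^{\rm clas}_{[\rho(\alpha_1),X]}\eta_2\\
&-\nabla_{\eta_2}(D^{\rm clas}_X\eta_1)+D^{\rm clas}_{[\rho(\alpha_2),X]}\eta_1,
\end{aligned}
\]
where $\alpha_i=\pr(\eta_i)$. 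Applying $\pi_A$, using $\pi_A\circ\nabla_{\eta_i}=\overline{\nabla}_{\eta_i}\circ\pi_A$ (valid once \eqref{IM_dist2} holds) and $\pi_A(D^{\rm clas}_X\eta_i)=\bbD_X(\alpha_i)$ for $X\in\H_M$, the right-hand side becomes precisely that of \eqref{IM_dist3}. Hence, granted \eqref{IM_dist2}, equation \eqref{IM_dist3} is equivalent to $\pi_A(D^{\rm clas}_X[\eta_1,\eta_2])=\bbD_X([\alpha_1,\alpha_2])$ for all $X\in\H_M$, that is, to $[\eta_1,\eta_2]\in\mathcal{J}_\bbD=\Gamma(J^1_\frakh A)$. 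I expect this computation to be the main obstacle, and the essential simplification is to reuse (IM1) for the Cartan form rather than unwinding the bracket of $J^1A$ directly.

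Finally, for $(c)\Leftrightarrow(d)$, note that once $(c)$ (equivalently $(b)$) holds, $\v=TA/\frakh$ is a $\VB$-algebroid with fat representation $\overline{\nabla}$, and $D([\eta])=\pi_A(D^{\rm clas}(\eta))$ is well defined (two representatives differ by a core section valued in $K$, whose $D^{\rm clas}$ is $K$-valued and hence killed by $\pi_A$). For the triple $(D,\pi_A,\pi_M)$ the compatibility \eqref{eq:compatibility} follows from \eqref{Spencer_decomp}, which gives $D(\B\Phi)=-\Phi\circ\pi_M$, together with the Leibniz rule of $D^{\rm clas}$; (IM3) is vacuous since $q=1$; and (IM2), (IM4), (IM5) are forced identities, coming from $\overline\rho=\pi_M\circ\rho$ on $A/K$ and from \eqref{jet_representation} once $\nabla_\eta$ preserves $K$. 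The only substantive equation left is (IM1), which reduces through the same identity above exactly to \eqref{IM_dist3}. Thus $(D,\pi_A,\pi_M)$ is an IM $1$-form iff $J^1_\frakh A$ is a Lie subalgebroid, i.e. iff $(c)$ holds, closing the loop and completing the proof.
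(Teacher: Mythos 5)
Your argument is correct in substance and reaches the same conclusions, but it routes the equivalences differently from the paper. The paper proves $(a)\Leftrightarrow(b)$ by citing \cite[Thm.~5.17]{Dru-Jotz-Ort} and then establishes $(b)\Leftrightarrow(c)$ and $(b)\Rightarrow(d)\Rightarrow(c)$; you instead prove $(a)\Leftrightarrow(c)$ directly, which makes the whole proof self-contained. Your ``decisive shortcut'' --- contracting equation (IM1) for the IM $1$-form $(D^{\rm clas},\mathrm{id}_A,\mathrm{id}_{TM})$ with $X$ and pushing forward by $\pi_A$ --- is in fact exactly the identity the paper uses in its $(b)\Rightarrow(d)$ step to verify (IM1) for $(D,\pi_A,\pi_M)$; you have simply relocated that computation so that it also does the work of the external citation, which is a genuine (and arguably cleaner) gain. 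One bookkeeping slip in your $(b)\Leftrightarrow(c)$ step should be corrected: being a Lie subalgebroid of $TA\to TM$ over $\H_M\subset TM$ requires, besides closure of $\Gamma(\H_M,\frakh)$ under the bracket, that the anchor $\rho_{TA}$ map $\frakh$ into $T\H_M$. For core sections this is your $\rho(K)\subset\H_M$, but for a linear section $\eta$ one has $\rho_{TA}(\eta)=W_{\nabla_\eta}$, which is tangent to $\H_M$ iff $\nabla_\eta$ preserves $\H_M$ (Proposition \ref{prop:derivation}); this condition is \emph{not} a consequence of $[\Gamma(J^1_\frakh A),\Gamma(K)]\subset\Gamma(K)$ together with $\rho(K)\subset\H_M$, as your sentence suggests, and it is precisely the ``$\H_M$ half'' of the clause in $(c)$ that the fat representation restricts to $\rho\colon K\to\H_M$. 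Since you already invoke the preceding Proposition's criteria for $\nabla_\eta$ preserving both $\H_M$ and $K$, the repair is immediate, but as written your list of conditions characterizes neither $(b)$ nor $(c)$ completely.
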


\begin{proof}
\noindent
\paragraph{$(a) \Leftrightarrow (b)$} It is the content of Theorem 5.17 of \cite{Dru-Jotz-Ort}.
\medskip

\paragraph{$(b) \Leftrightarrow (c)$} It follows from the description of the Lie algebroid structure on $TA \to TM$ using linear and core sections. More concretely, $\Gamma(\H_M,\frakh)$ is generated by $\eta \in \Gamma(J^1_\frakh A)$ and $\B k$, for $k \in K$, as a $C^\infty(\H_M)$-module. Now, from Proposition \ref{prop:derivation},
$$
\rho_{TA}(\eta) = W_{\nabla_\eta} \in T\H_M \Leftrightarrow \nabla_\eta \text{ preserves } \H_M,
$$
where $\nabla_\eta$ is the fat connection on $TM$. Similarly, $\rho_{TA}(\B k) = \rho(k)^\uparrow \in T\H_M \Leftrightarrow \rho(k) \in \H_M$. As for the Lie bracket,  $\Gamma(\H_M,\frakh)$ will be involutive if and only if $J^1_\frakh A \subset J^1A$ is a Lie subalgebroid and $\nabla_\eta$ preserves $K$, for every $\eta \in J^1_\frakh A$.
\medskip

\paragraph{$(b) \Rightarrow (d)$} The fact that $\frakh \subset TA$ is a $\VB$-subalgebroid implies that $\v=TA/\frakh \to TM/\H_M$ is a $\VB$-algebroid and the quotient map $TA \to \v$ is a $\VB$-algebroid morphism. The $\VB$-algebroid structure on $\v$ is determined by the following equations: for $\eta, \eta_1, \eta_2 \in \Gamma(J^1_\frakh A)$, $\beta \in \Gamma(A)$, $X \in \mathfrak{X}(M)$,
\begin{align}
\label{IM1_dist}[[\eta_1], [\eta_2]] & = [\,[\eta_1, \eta_2]\,] \,\,\,\,\, \textit{(linear-linear bracket)}\\
\label{IM2_dist}\nabla_{[\eta]}\pi_A(\beta) & = \pi_A(\nabla_\eta \beta) \,\,\,\textit{(linear-core bracket)}\\ 
\label{IM4_dist}\nabla_{[\eta]} \pi_M(X) & = \pi_M(\nabla_\eta X))\,\,\,\,\,\, \textit{(anchor on linear sections)}\\
\label{IM5_dist}\partial(\pi_A(\beta)) &= \pi_M(\rho(\beta))\,\,\,\,\, \textit{(anchor on core sections)}
\end{align}
where $\nabla_{[\eta]}$ is the fat representation on the core anchor complex $\partial: A/K \to TM/\H_M$. It is straightforward to check that \eqref{IM2_dist}, \eqref{IM4_dist} and \eqref{IM5_dist} are exactly the IM-equations (IM2), (IM4) and (IM5), respectively, for $(D,\pi_A,\pi_M)$. Also, \eqref{IM1_dist} and the fact that $(D^{\rm clas}, \mathrm{id}_A, \mathrm{id}_{TM})$ is a IM 1-form on $A$ with values in $TA$ imply the remaining equation (IM1) for $(D,\pi_A, \pi_A)$. Indeed,
\begin{align*}
D_X([[\eta_1],[\eta_2]]) & =  \pi_A(D_X^{\rm clas}([\eta_1,\eta_2])) \\
 & \hspace{-50pt} = \pi_A\left(\nabla_{\eta_1} D_X^{\rm clas}(\eta_2)  - D^{\rm clas}_{[\rho(\alpha_2), X]}(\eta_1) - \nabla_{\eta_2} D_X^{\rm clas}(\eta_1)  + D^{\rm clas}_{[\rho(\alpha_1), X]}(\eta_2)\right)\\
 & \hspace{-50pt} = \nabla_{[\eta_1]} D_X([\eta_2]) - D_{[\rho(\alpha_2), X]}([\eta_1]) - \nabla_{[\eta_2]} D_X([\eta_1]) + D_{[\rho(\alpha_1), X]}([\eta_2]).
 &  
\end{align*}
\paragraph{$(d) \Rightarrow (c)$} The IM-equations (IM2), (IM4) and (IM5) for $(D, \pi_A, \pi_M)$  are exactly \eqref{IM2_dist}, \eqref{IM4_dist} and \eqref{IM5_dist}, respectively. It is straightforward to check that: \eqref{IM5_dist} implies that $\rho(K) \subset \H_M$ and \eqref{IM2_dist} together with \eqref{IM4_dist} implies that the fat connection $\nabla_\eta$ preserves $\rho: K \to \H_M$ if $\eta \in J^1_\frakh A$. The remaining equation (IM1) can be written as
$$
D([[\eta_1], [\eta_2]]) = \pi_A(D^{\rm clas}([\eta_1, \eta_2])),
$$
for $\eta_1, \eta_2 \in \Gamma(J^1_\frakh A)$. Let $\eta \in \Gamma(J^1_\frakh A)$ be any section such that $[\eta] = [[\eta_1], [\eta_2]]$. As $\pr(\eta) = \pr([\eta_1, \eta_2])$, there exists $\Phi: TM \to A$ such that $\eta = [\eta_1, \eta_2] + \B \Phi$. Now, as
$$
\pi_A(D^{\rm clas}(\eta)) = D([[\eta_1], [\eta_2]]) = \pi_A(D^{\rm clas}[\eta_1, \eta_2]),
$$
one has that $\Phi(TM) \subset K$. This implies that $[\eta_1,\eta_2] \in \Gamma(J^1_\frakh A)$ as we wanted to prove.
\end{proof}

We are now able to state our main result regarding multiplicative distributions. It generalizes \cite[Thm.~2]{CSS}.

\begin{theorem}\label{thm:IM_dist}
 Let $\G \toto M$ be a source 1-connected groupoid. There is a 1-1 correspondence between multiplicative distributions $\H \subset T\G$ with profile $(\H_M, K)$ and IM distributions on $A$, $\bbD: \Gamma(A) \to \Gamma(\H_M^* \otimes (A/K))$. For $X \in (\H_M)_x$, 
 \begin{equation}\label{IM_dist_integration}
 \bbD_X(\alpha) = \pi_A([\widetilde{X}, \overrightarrow{\alpha}]),
 \end{equation}
 where $\widetilde{X} \in \Gamma(\H)$ is any section with $\widetilde{X}(x) = X$.
\end{theorem}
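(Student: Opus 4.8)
The plan is to realize the correspondence as a special instance of Theorem \ref{thm:main} combined with Proposition \ref{prop:IM_equiv}. Given a multiplicative distribution $\H \subset T\G$ with profile $(\H_M, K)$, the quotient $\V = T\G/\H \toto TM/\H_M$ is a $\VB$-groupoid with core $A/K$, and the tautological projection $\vartheta = \pi\colon T\G \to T\G/\H$ is a surjective $\VB$-groupoid morphism, hence a multiplicative $1$-form $\vartheta \in \Omega^1(\G, \V)$. Conversely, any fiberwise surjective multiplicative $1$-form with values in a $\VB$-groupoid whose core and base maps are quotient projections recovers a multiplicative distribution as $\H = \ker \vartheta$: surjectivity forces $\ker \vartheta$ to have constant rank, so it is an honest subbundle, and being the kernel of a groupoid morphism it is a subgroupoid. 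This identifies multiplicative distributions with a distinguished class of multiplicative $1$-forms.

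Next I would compute the infinitesimal components of $\vartheta = \pi$ via the formulas in Theorem \ref{thm:main}. Since $\vartheta|_M$ is the base projection $TM \to TM/\H_M$ one gets $\theta = \pi_M$; and since $\overrightarrow{\alpha}|_x = \alpha(x) \in A_x$ lies in the core, $l(\alpha) = i_{\overrightarrow{\alpha}}\vartheta|_M = \pi_A(\alpha)$, so $l = \pi_A$. Thus the IM $1$-form attached to $\vartheta$ is exactly of the shape $(D, \pi_A, \pi_M)$ with values in $\v = \mathrm{Lie}(T\G/\H) = TA/\frakh$ appearing in Proposition \ref{prop:IM_equiv}(d). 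Feeding this into the equivalences of Proposition \ref{prop:IM_equiv} translates the IM $1$-form into the IM distribution $(\H_M, K, \bbD)$, and the bijectivity of both Theorem \ref{thm:main} and Proposition \ref{prop:IM_equiv} yields the desired $1$-$1$ correspondence; the source $1$-connectedness hypothesis enters precisely through the integration half of Theorem \ref{thm:main}.

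It remains to derive the explicit formula \eqref{IM_dist_integration}. Writing $[\eta] \in \Gamma_{lin}(TM/\H_M, \v)$ for the linear section induced by $\eta \in \Gamma(J^1_\frakh A)$ with $\pr(\eta) = \alpha$, I would evaluate $D([\eta]) = L_{\Delta_{[\eta]}}(\vartheta)|_M$. Because $\vartheta = \pi$ is the quotient projection and $\Delta_{[\eta]}$ is the descent of the derivation $\Delta_\eta$ of $T\G$ (whose symbol is $\overrightarrow{\alpha}$), the Lie derivative collapses to $(L_{\Delta_{[\eta]}}\vartheta)(U) = \pi\big(\Delta_\eta(U) - [\overrightarrow{\alpha}, U]\big)$, and substituting \eqref{right_inv_der} gives $D([\eta])(X) = \pi_A(D^{\rm clas}_X(\eta))$ for $X \in T_xM$. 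In particular $\bbD_X(\alpha) = \pi_A(D^{\rm clas}_X(\eta))$ for $X \in \H_M$, which matches the defining condition of $\mathcal{J}_\bbD = \Gamma(J^1_\frakh A)$. To recognize the right-hand side as $\pi_A([\widetilde{X}, \overrightarrow{\alpha}])$, I would apply \eqref{right_inv_der} to a section $\widetilde{X} \in \Gamma(\H)$ with $\widetilde{X}(x) = X$: since $T\tar(X) = X$ for $X \in \H_M \subset TM$, one finds $[\widetilde{X}, \overrightarrow{\alpha}](x) = D^{\rm clas}_X(\eta) - \Delta_\eta(\widetilde{X})(x)$, and as $\eta \in J^1_\frakh A$ forces $\Delta_\eta$ to preserve $\Gamma(\H)$, the term $\Delta_\eta(\widetilde{X})(x)$ lies in $\H$ and dies under $\pi$. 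Independence of the choice of $\widetilde{X}$ follows because the linearization at $x$ of any section of $\H$ vanishing there still takes values in $\H$.

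The main obstacle I anticipate is the integration direction: verifying that the multiplicative $1$-form produced from an IM distribution is genuinely fiberwise surjective of constant rank — so that $\ker \vartheta$ is a subbundle with the prescribed profile $(\H_M, K)$ — and that the descent of $\Delta_\eta$ to $T\G/\H$ used in the explicit formula is legitimate, i.e. that $\overrightarrow{\eta}$ preserves $\H$ for $\eta \in J^1_\frakh A$. Both points are controlled by Proposition \ref{prop:IM_equiv} (the surjectivity of $J^1_\frakh A \to A$ via adapted connections, and the fact that $\frakh \subset TA$ is a $\VB$-subalgebroid integrating to $\H$), but they require care to state cleanly rather than any genuinely new idea.
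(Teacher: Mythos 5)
Your derivation of the explicit formula \eqref{IM_dist_integration} is essentially the paper's own argument: both compute $D$ for the quotient projection $\vartheta: T\G \to T\G/\H$, use \eqref{right_inv_der} to get $D([\eta])(X)=\pi_A(D^{\rm clas}_X(\eta))$, and kill the term $\Delta_\eta(\vartheta(\widetilde X))$ by choosing $\widetilde X \in \Gamma(\H)$ (the paper then transfers to the linear picture via Remark \ref{Lie_on_forms}, while you invoke the identity $\mathcal{J}_\bbD=\Gamma(J^1_\frakh A)$ directly; these are equivalent). Your identification of the infinitesimal components as $(D,\pi_A,\pi_M)$ and the use of Proposition \ref{prop:IM_equiv} to translate into IM distributions are also correct and match the paper.

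The gap is in the integration half of the correspondence. You propose to produce $\H$ from an IM distribution by integrating the IM $1$-form $(D,\pi_A,\pi_M)$ via Theorem \ref{thm:main} and taking $\H=\ker\vartheta$. But Theorem \ref{thm:main} integrates an IM form with values in $\v=\mathrm{Lie}(\V)$ \emph{for a given $\VB$-groupoid $\V$}: starting only from $(\H_M,K,\bbD)$ you have the $\VB$-algebroid $\v=TA/\frakh$ but no integrating $\VB$-groupoid over $\G$ in hand --- the natural candidate $T\G/\H$ presupposes the distribution $\H$ you are trying to construct. This is a genuine circularity as written. It can be repaired either by first invoking the integrability theory of $\VB$-algebroids over a source $1$-connected base to manufacture an abstract $\V$ integrating $TA/\frakh$ (and then still checking that the integrated $\vartheta$ is fiberwise surjective with kernel of profile $(\H_M,K)$, which you flag but do not carry out), or --- as the paper does --- by bypassing Theorem \ref{thm:main} entirely for this step: Proposition \ref{prop:IM_equiv}(b) says the IM distribution is the same as a $\VB$-subalgebroid $\frakh\subset TA$, and since $T\G$ is source $1$-connected when $\G$ is (its source fibers are affine bundles over those of $\G$), $\frakh$ integrates directly to a $\VB$-subgroupoid $\H\subset T\G$ by \cite[Cor.~4.3.7]{Bur-Cab-Hoy}. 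The paper reserves Theorem \ref{thm:main} only for the explicit formula, where $\V=T\G/\H$ is already available. You should either adopt this route for the integration direction or supply the missing integrability input and the rank/profile verification for $\ker\vartheta$.
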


\begin{proof}
 The 1-1 correspondence between multiplicative distributions and IM distributions is a straightforward consequence of the Lie theory of $\VB$-groupoids. Indeed, given a multiplicative distribution $\H \subset T\G$, its $\VB$-algebroid $\frakh \subset TA$ correspond to an IM distribution via \eqref{bbD2} and Proposition \ref{prop:IM_equiv}. Reciprocally, given an IM distribution, the corresponding $\VB$-subalgebroid $\frakh \subset TA$ \eqref{h_from_D} can be integrated to a $\VB$-subgroupoid $\H \subset T\G$ using \cite[Cor.~4.3.7]{Bur-Cab-Hoy}. 
 
 So, what remains to be shown is \eqref{IM_dist_integration}. For this, consider the quotient projection $\vartheta: T\G \to T\G/\H$. It is a multiplicative 1-form with values in the $\VB$-groupoid $\V:= T\G/\H$. Hence, from Theorem \ref{thm:main}, there exists an IM 1-form $(D, l, \theta)$ on $A$ with values in $\v:=TA/\frakh$, where $\frakh \subset TA$ is the $\VB$-algebroid of $\H$ and 
 $$
 D_X(\eta) = \Delta_{\overrightarrow{\eta}}(\vartheta(\widetilde{X})) - \vartheta([\overrightarrow{\alpha}, \widetilde{X}]),
 $$
where $\eta \in \Gamma_{lin}(TM/\H_M, \v)$, $\alpha = \pr(\eta)$, $X \in T_xM$ and $\widetilde{X} \in \frakx(\G)$ is any vector field with $\widetilde{X}(x)$. If $X \in \H_M$, one can always choose $\widetilde{X} \in \Gamma(\H)$ so that $\vartheta(\widetilde{X})=0$. In this case, 
$$
D_X(\eta) = \vartheta([\widetilde{X}, \overrightarrow{\alpha}](x)) = \pi_A([\widetilde{X}, \overrightarrow{\alpha}](x)), \,\,\,\, \forall \, X \in \H_M,
$$
where we have used that $\vartheta|_A = l$ and $l=\pi_A: A \to A/K$, the quotient projection. This result can now be applied to the linear distribution $\frakh \subset TA$ to obtain the following fact regarding the IM distribution \eqref{bbD2}:
$
D^{\tau}_X(\eta) = \bbD_X(\alpha).
$
where $\tau: TA \to TA/\frakh$ is the quotient projection and $D^{\tau}$ is the IM 1-form associated to $\tau$. The result now follows from the coincidence between the infinitesimal components of $\vartheta$ and $\tau=\mathrm{Lie}(\vartheta)$ (see Remark \eqref{Lie_on_forms}).
\end{proof}

\begin{remark}\em
The problem of involutivity of $\H \subset T\G$ is studied in \cite{CSS,Jotz-Ortiz}.
 \end{remark}

\appendix

\section{Linear vector fields}\label{app:linear}
In this appendix, we recall some well-known facts regarding linear vector fields on vector bundles and their correspondence to derivations. The main goal is to set some notation used in the text and to establish a result giving criteria to a linear vector field to be tangent to a subbundle in terms of the corresponding derivation. The reference for this part is \cite{Kos-Mac}, \cite[Section~3.4]{Mckz2}. 

Let $p:E \to M$ be a vector bundle.  A vector field $W \in \frakx(E)$ is said to be \textit{linear} if it flows by linear isomorphisms of $E$, $\Fl_{W}^\epsilon: E \to E$. The flow of $W$ covers the flow of a vector field on $M$ which we denote by $\sharp(W)$ and called it the symbol of $E$.  A linear vector field $W$ corresponds to a derivation \footnote{A derivation on $E$ is an $\R$-linear map $\Delta: \Gamma(E^*) \to \Gamma(E^*)$ satisfying $\Delta(f\psi) = f\Delta(\psi) + (\Lie_{\sharp(\Delta)}f) \,\psi$, for $f \in C^\infty(M)$. The vector field $\sharp(\Delta)$ is called the symbol of $\Delta$.} $\Delta: \Gamma(E^*) \to \Gamma(E^*)$ on $E$. The correspondence between derivations and linear vector fields works as follows: a section $\psi \in \Gamma(E^*)$ gives rise to a function $\ell_\psi \in C^{\infty}(E)$ and
\begin{align} 
\label{eq:der1} W(\ell_\psi) & = \ell_{\Delta \psi}\\
\label{eq:der2} W(f \circ p_*) & = (\Lie_{\sharp(\Delta)} f) \circ p_*,
\end{align}
where $p_*: E^* \to M$ is the projection on the dual vector bundle. In particular, the symbols coincide: $\sharp(W) = \sharp(\Delta)$. 

Given a derivation $\Delta: \Gamma(E^*) \to \Gamma(E^*)$,  its \textit{adjoint} is the derivation $\Delta^{\top}: \Gamma(E) \to \Gamma(E)$ on $E$ defined as follows:
\begin{equation}\label{dual_der}
\<\psi, \Delta^\top(u)\> = \Lie_{\sharp(\Delta)} \<\psi, u\> -\<\Delta(\psi), u\>, \,\,\, \psi \in \Gamma(E^*), \,u \in \Gamma(E).
\end{equation}
There is another important characterization of the adjoint derivation $\Delta^\top$ (see Theorem 3.4.5 in \cite{Mckz2}): for $u \in \Gamma(E)$
\begin{equation}\label{der_adjoint}
(\Delta^\top(u))^\uparrow = [W_\Delta, u^\uparrow],
\end{equation}
where $(\cdot)^\uparrow$ is the vertical lift of a section.

Taking adjoints gives a 1-1 correspondence $W \in \frakx(E) \mapsto W^\top \in \frakx(E^*)$ between linear vector fields on $E$ and on $E^*$. It will be important to give a more concrete description of this correspondence. Let $\brac{\cdot, \cdot}: TE \times_{TM} T(E^*) \to \R$ be the derivative of the natural bracket $\<\cdot, \cdot\>: E\times_M E^* \to \R$. The equation
\begin{equation}\label{eq:dual_linear}
\brac{W^\top(\psi), W(u)} = 0 , \,\,\, \forall \, (\psi, u) \in E^*\times_M E,
\end{equation}
completely determine the duality relation (see Proposition 3.4.7 in \cite{Mckz2}). Also, the flow of $W^\top$ is given by
\begin{equation}\label{eq:adj_flow}
 \Fl_{W^\top}^\epsilon = (\Fl_W^{-\epsilon})^*.
\end{equation}

\begin{example}\em\label{ex:tan_lift}
 A vector field $X \in \frakx(M)$ on $M$ gives rise to a linear vector field $X^T \in \frakx(TM)$ on $TM \to M$ called the tangent lift of $X$. Its flow is given by differentiation of the flow of $X$, $T \Fl_X^\epsilon: TM \to TM$. The corresponding derivation on $T^*M$ is the Lie derivative along $X$,
 $$
 \Lie_X: \omega \mapsto \Lie_X \omega.
 $$
 The adjoint derivation  is $[X, \cdot]: \Gamma(TM) \to \Gamma(TM)$
 \end{example}

\begin{remark}\label{der_linear}{\em 
For an endomorphism $\Phi: E \to E$, 
\begin{equation*}
W_\Phi(v) = \left. \frac{d}{d\epsilon}\right|_{\epsilon=0} (v + \epsilon \, \Phi(v))
\end{equation*}
defines a linear vector field on $E$. The corresponding derivation $\Delta_{\Phi}: \Gamma(E^*) \to \Gamma(E^*)$ is just $\Phi^*$ and its dual $\Delta_{\Phi}^\top$ is $-\Phi$. Note that $\ker(\sharp) = \{\Delta_{\Phi}\,\,|\,\, \Phi:E \to E \text{ endomorphism}\}$
}
\end{remark}

A derivation $\Delta: \Gamma(E) \to \Gamma(E)$ defines an operator $L_\Delta: \Omega^k(M, E) \to \Omega^k(M,E)$ by
\begin{align}\label{der_Lie}
\begin{split}
(L_\Delta \vartheta)(X_1, \dots, X_k) & = \Delta(\vartheta(X_1, \dots, X_k))\\ 
& \hspace{-20pt} + \sum_{i=1}^k(-1)^{i} \vartheta([\sharp(\Delta), X_i], X_1, \dots, X_k).
\end{split}
\end{align}

\begin{proposition}\label{prop:der_eq}
For $\vartheta \in \Omega^k(M, E)$ and a derivation $\Delta: \Gamma(E^*) \to \Gamma(E^*)$, one has that
$$
(L_\Delta \vartheta)(X_1, \dots, X_k) = \left.\frac{d}{d\epsilon}\right|_{\epsilon=0} \Fl_{W_{\Delta}}^{-\epsilon}(\vartheta(T\Fl_{\sharp(\Delta)}(X_1), \dots, T\Fl_{\sharp(\Delta)}(X_k))).
$$
\end{proposition}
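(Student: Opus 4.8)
The plan is to recognize the right-hand side as the \emph{geometric} Lie derivative of $\vartheta$ along the flow of the linear vector field $W_\Delta$, and then to show that this operator coincides with the algebraically defined $L_\Delta$ of \eqref{der_Lie}. Write $X=\sharp(\Delta)$ and $\phi_\epsilon=\Fl^\epsilon_{W_\Delta}$, which by definition of a linear vector field is a vector bundle automorphism of $E$ covering the diffeomorphism $\Fl^\epsilon_X$ of $M$; denote by $\Lie_{W_\Delta}\vartheta$ the $k$-form whose value at $(X_1,\dots,X_k)$ is the right-hand side. Since $\Omega^k(M,E)\cong\Omega^k(M)\otimes_{C^\infty(M)}\Gamma(E)$ is generated over $C^\infty(M)$ by decomposables $\omega\otimes u$ with $\omega\in\Omega^k(M)$ and $u\in\Gamma(E)$, and since both $\Lie_{W_\Delta}$ and $L_\Delta$ are $\mathbb{R}$-linear, it suffices to compare the two operators on such elements.

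First I would establish a Leibniz rule for $\Lie_{W_\Delta}$ exploiting that $\phi_\epsilon$ is a bundle map covering $\Fl^\epsilon_X$. For a decomposable form the pullback factors through base and fibre:
\begin{equation*}
\phi_{-\epsilon}\big((\omega\otimes u)(T\Fl^\epsilon_X X_1,\dots,T\Fl^\epsilon_X X_k)\big)=\big((\Fl^\epsilon_X)^*\omega\big)(X_1,\dots,X_k)\,\phi_{-\epsilon}\big(u(\Fl^\epsilon_X(x))\big),
\end{equation*}
and differentiating at $\epsilon=0$ by the product rule yields
\begin{equation*}
\Lie_{W_\Delta}(\omega\otimes u)=(\Lie_X\omega)\otimes u+\omega\otimes\Lie_{W_\Delta}(u),
\end{equation*}
where $\Lie_X\omega$ is the ordinary Lie derivative on $M$ and $\Lie_{W_\Delta}(u)(x)=\left.\tfrac{d}{d\epsilon}\right|_{\epsilon=0}\phi_{-\epsilon}(u(\Fl^\epsilon_X(x)))\in\Gamma(E)$ is the geometric Lie derivative of the section.

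The key identification is that $\Lie_{W_\Delta}(u)$ is precisely the derivation $\Delta$ attached to $W_\Delta$. I would prove this by comparing vertical lifts: conjugating the translation flow $\Fl^t_{u^\uparrow}(e)=e+t\,u(p(e))$ by the \emph{linear} flow $\phi_\epsilon$ gives, by linearity of $\phi_{-\epsilon}$, the translation flow along the path of sections $x\mapsto\phi_{-\epsilon}(u(\Fl^\epsilon_X(x)))$, so that $[W_\Delta,u^\uparrow]=(\Lie_{W_\Delta}u)^\uparrow$; comparing with the characterization \eqref{der_adjoint} of the derivation associated to $W_\Delta$ identifies $\Lie_{W_\Delta}u$ with $\Delta u$ (equivalently, one pairs with an arbitrary $\psi\in\Gamma(E^*)$ and uses \eqref{eq:adj_flow} and \eqref{dual_der} to reduce to the scalar Lie derivative of $\langle\psi,u\rangle$ along $X$). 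Finally I would expand $L_\Delta(\omega\otimes u)$ from \eqref{der_Lie}: the term $\Delta(\omega(X_1,\dots,X_k)\,u)$ splits, via the Leibniz rule of $\Delta$ with symbol $X$, into $\omega(X_1,\dots,X_k)\,\Delta u$ plus a scalar contribution which, together with the $[X,X_i]$ terms of \eqref{der_Lie}, reassembles by Cartan's formula into $(\Lie_X\omega)(X_1,\dots,X_k)\,u$; thus $L_\Delta(\omega\otimes u)=(\Lie_X\omega)\otimes u+\omega\otimes\Delta u$, matching $\Lie_{W_\Delta}(\omega\otimes u)$. Comparison on decomposables and extension by $\mathbb{R}$-linearity then conclude the proof.

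The main obstacle is the rigor of the Leibniz decomposition: because the base point $\Fl^\epsilon_X(x)$ and hence the fibre over it move with $\epsilon$, the single $\epsilon$-derivative entangles the flow $\phi_{-\epsilon}$ acting on the values of $\vartheta$ with the pushforward $T\Fl^\epsilon_X$ acting on the arguments, and one must justify splitting this limit cleanly into a purely base (form) derivative and a purely fibrewise (section) derivative. This is exactly where it is essential that $\phi_\epsilon$ is a \emph{linear} bundle automorphism covering $\Fl^\epsilon_X$, and it is also the point at which the adjoint bookkeeping between the derivation on $E$ entering $L_\Delta$ and the $E^*$-derivation naturally attached to $W_\Delta$ (cf.\ \eqref{dual_der} and Remark \ref{der_linear}) must be tracked carefully to fix all signs.
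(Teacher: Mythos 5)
Your proof is correct, but it takes a genuinely different route from the paper's. The paper never reduces to decomposables: it pairs the right-hand side with an arbitrary $\psi\in\Gamma(E^*)$, i.e.\ applies the fiberwise-linear function $\ell_\psi$, and splits the single $\epsilon$-derivative by the two-variable chain rule into (a) the derivative of $\ell_\psi\circ\Fl^{-\epsilon}_{W_\Delta}$ at the \emph{fixed} vector $\vartheta(X_1,\dots,X_k)$, which equals $-\langle\Delta\psi,\vartheta(X_1,\dots,X_k)\rangle$ by \eqref{eq:der1}, and (b) the derivative of $\ell_\psi(\vartheta(T\Fl^\epsilon_{\sharp(\Delta)}X_1,\dots))=(\vartheta^*\psi)(T\Fl^\epsilon_{\sharp(\Delta)}X_1,\dots)$, which is the scalar Lie derivative $(\Lie_{\sharp(\Delta)}\vartheta^*\psi)(X_1,\dots,X_k)$; the definition \eqref{dual_der} of $\Delta^\top$ then reassembles the two pieces into $\langle\psi,(L_{\Delta^\top}\vartheta)(X_1,\dots,X_k)\rangle$. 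You instead stay upstairs with $E$-valued forms: restrict to decomposables $\omega\otimes u$, prove a Leibniz rule for the geometric Lie derivative (which is exact, since $\phi_{-\epsilon}$ is fiberwise linear and so pulls the scalar factor out), and identify the section-level derivative with the derivation on $\Gamma(E)$ via the vertical-lift formula \eqref{der_adjoint}. Both arguments are sound; the paper's pairing trick treats arbitrary $\vartheta$ in one stroke and matches the ``componentwise linear function'' viewpoint used throughout Section 5, while yours makes the tensor-derivation structure $L_\Delta(\omega\otimes u)=(\Lie_{\sharp(\Delta)}\omega)\otimes u+\omega\otimes\Delta^\top u$ explicit, at the small cost of the reduction to decomposables (harmless, since $f\cdot(\omega\otimes u)=(f\omega)\otimes u$ is again decomposable and both operators are local). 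One notational caveat, not a gap: since $\Delta$ acts on $\Gamma(E^*)$ while \eqref{der_Lie} requires a derivation of $\Gamma(E)$, the derivation entering $L_\Delta$ --- and the one you compute as $\Lie_{W_\Delta}u$ --- is the adjoint $\Delta^\top$, exactly as in the paper's final line; you flag this, and your appeal to \eqref{der_adjoint} does land on the correct object.
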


\begin{proof}
 Given $\psi \in \Gamma(E^*)$,
\begin{align*}
\left.\frac{d}{d\epsilon}\right|_{\epsilon=0} \!\!\!\!\!\ell_\psi(\Fl_{W_{\Delta}}^{-\epsilon}(\vartheta(T\Fl^\epsilon_{\sharp(\Delta)}(X_1), \dots, T\Fl^\epsilon_{\sharp(\Delta)}(X_k)))) & =  \left.\frac{d}{d\epsilon}\right|_{\epsilon=0} \!\!\!\!\!\ell_\psi(\Fl_{W_{\Delta}}^{-\epsilon}(\vartheta(X_1, \dots, X_k)))\\
& \hspace{-180pt} + \left.\frac{d}{d\epsilon}\right|_{\epsilon=0} \!\!\!\!\!\ell_\psi(\vartheta(T\Fl^\epsilon_{\sharp(\Delta)}(X_1), \dots, T\Fl^\epsilon_{\sharp(\Delta)}(X_k)))\\
& \hspace{-120pt} = - \<\Delta(\psi), \vartheta(X_1, \dots, X_k)\> + (\Lie_{\sharp(\Delta)} \vartheta^*\psi)(X_1,\dots, X_k), 
\end{align*} 
where $\vartheta^*\psi \in \Omega^k(M)$ and we have used the properties of the tangent lift recalled in Example \ref{ex:tan_lift}. The result now follows from the definition \eqref{dual_der} of $\Delta^\top$.
\end{proof}

Our last result explores the correspondence between linear vector fields and derivations to give criteria for a linear vector field to be tangent to a subbundle.

\begin{proposition}\label{prop:derivation}
Let $E_0 \subset E$ be a vector subbundle over $M$ and $u \in \Gamma(E_0)$. A linear vector field $W \in \mathfrak{X}(E)$  satisfies $W(u(x)) \in T_{u(x)}E_0$ if and only if the adjoint of the corresponding derivation, $\Delta^\top: \Gamma(E) \to \Gamma(E)$, satisfies $\Delta^\top(u)(x) \in E_0$.
\end{proposition}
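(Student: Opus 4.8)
The plan is to reduce the tangency condition $W(u(x)) \in T_{u(x)}E_0$ to a statement in the vertical direction alone, where the adjoint derivation naturally lives, and then to read off membership in $E_0$. First I would set $X := \sharp(W) = \sharp(\Delta) \in \mathfrak{X}(M)$ and note that both $W(u(x))$ and the velocity $Tu(X(x))$ of the section $u$ project under $Tp$ to $X(x)$: indeed $W$ has symbol $X$, and $u$ being a section forces $Tp \circ Tu = \mathrm{id}$. Consequently the difference $W(u(x)) - Tu(X(x))$ lies in $\ker T_{u(x)}p = V_{u(x)}E \cong E_x$, i.e.\ it is a \emph{vertical} vector.

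The key computation I would carry out is to identify this vertical vector with (a sign times) $\Delta^\top(u)(x)$. Testing against the fiberwise-linear functions $\ell_\psi$, for $\psi \in \Gamma(E^*)$, I would use \eqref{eq:der1} to obtain $W(u(x))(\ell_\psi) = \langle \Delta\psi, u\rangle(x)$, and the chain rule together with \eqref{eq:der2} to obtain $Tu(X(x))(\ell_\psi) = (\Lie_X \langle \psi, u\rangle)(x)$. Subtracting and invoking the defining relation \eqref{dual_der} of the adjoint derivation, the difference collapses to $-\langle \psi(x), \Delta^\top(u)(x)\rangle$. Since a vertical vector at $u(x)$ is determined by its pairing with all the $\ell_\psi$, this yields
\[
W(u(x)) - Tu(X(x)) = \big(-\Delta^\top(u)(x)\big)^\uparrow,
\]
the vertical lift of $-\Delta^\top(u)(x) \in E_x$ based at the point $u(x)$.

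To conclude I would invoke two elementary facts about the subbundle $E_0$. Since $u \in \Gamma(E_0)$, the image of $u$ lies in $E_0$, so $Tu(X(x)) \in T_{u(x)}E_0$; and because $E_0 \subset E$ is a vector subbundle, a vertical vector at $u(x)$ lies in $T_{u(x)}E_0$ if and only if the corresponding element of $E_x$ lies in the fiber $E_{0,x}$ (that is, $T_{u(x)}E_0 \cap \ker T_{u(x)}p = V_{u(x)}E_0 \cong E_{0,x}$). Combining these, $W(u(x)) \in T_{u(x)}E_0$ iff the vertical vector $W(u(x)) - Tu(X(x))$ lies in $T_{u(x)}E_0$, iff $-\Delta^\top(u)(x) \in E_{0,x}$, iff $\Delta^\top(u)(x) \in E_0$, which is exactly the assertion.

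I expect the only delicate point to be the bookkeeping in the second paragraph: ensuring that the splitting of $W(u(x))$ into the section-tangent piece $Tu(X(x))$ and the vertical piece is canonical, and tracking the sign produced by \eqref{dual_der}. As a cross-check (not the main route) I would note that \eqref{der_adjoint} gives $(\Delta^\top(u))^\uparrow = [W,u^\uparrow]$, whose evaluation along $u$ recovers the same vertical vector; but the direct $\ell_\psi$-pairing computation is cleaner, as it avoids evaluating a Lie bracket of vector fields pointwise.
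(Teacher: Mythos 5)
Your argument is correct, and it takes a genuinely different route from the paper's. The paper proves this proposition by a direct computation in local coordinates $(x_i,\xi^k)$ adapted to $E_0$: it writes $W = X_i\,\partial_{x_i} + \Phi^k_j\xi^j\,\partial_{\xi^k}$, reads off $\Delta^\top(u)$ componentwise as $X_i\,\partial u^k/\partial x_i - \Phi^k_j u^j$, and observes that for $u\in\Gamma(E_0)$ both the tangency of $W(u(x))$ and the membership $\Delta^\top(u)(x)\in E_0$ reduce to the single condition $\Phi^k_j(x)u^j(x)=0$ for $k>\mathrm{rank}(E_0)$. You instead isolate the invariant identity
\[
W(u(x)) - Tu\bigl(\sharp(W)(x)\bigr) = \bigl(-\Delta^\top(u)(x)\bigr)^\uparrow,
\]
established by pairing against the fiberwise-linear functions $\ell_\psi$ and invoking \eqref{dual_der}, and then conclude using that $Tu(X(x))\in T_{u(x)}E_0$ and that $T_{u(x)}E_0\cap\ker T_{u(x)}p \cong (E_0)_x$. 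The sign and the identification both check out against the paper's coordinate formulas. What your approach buys is a coordinate-free statement of independent interest (the canonical decomposition of a linear vector field along the image of a section into its ``section-tangent'' and vertical parts), which makes the equivalence transparent and reusable; what the paper's approach buys is brevity, since in adapted coordinates the two conditions are visibly the same equation. One cosmetic remark: your citation of \eqref{eq:der2} in computing $Tu(X(x))(\ell_\psi)$ is unnecessary --- the chain rule alone gives $X(x)(\ell_\psi\circ u) = (\Lie_X\langle\psi,u\rangle)(x)$ --- but this does not affect the argument.
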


\begin{proof}
Locally $E$ is given by coordinates $(x_i, \xi^k)$ with $E_0$ determined by $\xi^k=0, \,\, k > r$, where $r = \mathrm{rank}(E_0)$. A linear vector field is given by
$$
W(x,\xi) = X_i(x) \frac{\partial}{\partial x_i} + \Phi_j^k(x) \xi^j \, \frac{\partial}{\partial \xi^k}.
$$
An arbitrary section $u \in \Gamma(E)$ is determined by $\xi^k = u^k(x)$ and $\Delta^\top(u)$ is given by
$$
\xi^k = X_i(x) \frac{\partial u^k}{\partial x_i}(x) - \Phi_j^k(x) u^j(x).
$$
Now, for $u \in \Gamma(E_0)$, one has that $u^k(x) \equiv 0$, for $k > r$. In this case $W(u(x)) \in T_{u(x)}E_0$ if and only if $\Phi_j^k (x) u^j(x)= 0, \, \forall \, k > r$. This is clearly equivalent to $\Delta^\top(u)(x) \in E_0$.
\end{proof}

\end{document}